\newtheorem{theorem}{Theorem}[section]
\newtheorem{corollary}[theorem]{Corollary}
\newtheorem{lemma}[theorem]{Lemma}
\newtheorem{proposition}[theorem]{Proposition}
\theoremstyle{definition}
\newtheorem{definition}[theorem]{Definition}
\numberwithin{equation}{section}
\begin{document}




\title[$H^p$ spaces of separately $(\alpha, \beta)-$harmonic functions in the polydisc]{$H^p$ spaces of separately $(\alpha, \beta)-$harmonic functions in the unit polydisc}

\author[J. Gaji\'c]{Jelena Gaji\'c}
\address{Faculty of Natural Sciences and Mathematics \\ University of Banja Luka\\ Mladena Stojanovi\'ca 2\\ 78000 Banja Luka, Bosnia and Herzegovina}
\email{jelena.gajic.mm@gmail.com}

\author[M. Mateljevi\'c]{Miodrag Mateljevi\'c}
\address{Department of Mathematics\\ University of Belgrade\\
Studentski Trg 16\\
11000 Belgrade, Serbia}
\email{miodrag@matf.bg.ac.rs}

\author[M. Arsenovi\'c]{Milo\v s Arsenovi\'c}
\address{Department of Mathematics\\ University of Belgrade\\
Studentski Trg 16\\
11000 Belgrade, Serbia}

\email{arsenovic@matf.bg.ac.rs}


\


\date{}

\begin{abstract}

We prove existence and uniqueness of a solution of the Dirichlet problem  for separately $(\alpha, \beta)$ - harmonic functions on the unit polydisc $\mathbb D^n$ with  boundary data in $C(\mathbb T^n)$ using 
$(\alpha, \beta)$ - Poisson kernel 
$P_{\alpha, \beta} (z, \zeta)$. 
A characterization by  hypergeometric functions of separately
$(\alpha, \beta)$ - harmonic functions which are also $m$ - homogeneous is given, this characterization is used to obtain series expansion of separately $(\alpha, \beta)$ - harmonic functions. Basic $H^p$ theory of such functions is developed: integral representations by measures and $L^p$ functions on $\mathbb T^n$,  
norm and weak$^\ast$ convergence at the distinguished boundary $\mathbb T^n$.
Weak $(1,1)$ - type estimate for a restricted non-tangential maximal function $M_{A, B}^{NT}$ is derived.
Slice functions $u(z_1, \ldots, z_k, \zeta_{k+1}, \ldots, \zeta_n)$, where some of the variables are fixed, are shown to belong in
the appropriate space of separately $(\alpha', \beta')$ - harmonic functions of $k$ variables.
We  prove a Fatou type theorem on a. e. existence of restricted non-tangential limits for these functions and a corresponding result for unrestricted limit at a point in $\mathbb T^n$. Our results extend earlier results for $(\alpha, \beta)$ - harmonic functions in the disc and for $n$ - harmonic functions in $\mathbb D^n$. 

\end{abstract}

\subjclass[2020]{Primary 32A35; Secondary 42B30, 42B25, 32A05, 35J48}

\keywords{separately $(\alpha, \beta)$ - harmonic functions, $H^p$ spaces, non-tangential limits, polydisc}

\maketitle

\section{Introduction and notation}\label{Intro}

The first line of research that motivates our investigations is concerned with $(\alpha, \beta)$ - harmonic functions, i.e. functions
$u$ satisfying equation $L_{\alpha, \beta} u = 0$ where 
$L_{\alpha, \beta} = \Delta + \alpha L_1 + \beta L_2 + Q_{\alpha, \beta}$. Here $\Delta$ is the Laplace - Beltrami operator on a domain $\Omega$, $L_1$ and $L_2$ are linear differential operators of first order and $Q_{\alpha, \beta}$ is a zero order operator. An early paper touching on this topic is \cite{Ge} where $\Omega$ is the Siegel upper half plane ${\bf U^{n+1}}$. 
There $(\alpha, \beta)$ - harmonic functions were used as a tool for obtaining results on $H^p$ theory on the Heisenberg group
$\partial  {\bf U^{n+1}} = {\bf H^n}$.
The case where $\Omega$ is the unit ball in $\mathbb C^n$ was considered in \cite{ABC} where series expansion of 
$(\alpha, \beta)$ - harmonic functions was derived and $H^p$ theory results for such functions were obtained. A detailed 
investigation of series expansion of $(\alpha, \beta)$ - harmonic functions in the unit disc can be found in \cite{KO}. Let us 
mention that special one parameter cases also appeared, see for example \cite{OW}, \cite{MA} and \cite{OP}. 
In the cited papers the operators $L_{\alpha, \beta}$ are uniformly elliptic, but degenerate near the boundary of the domain: the lower order terms $\alpha L_1, \beta L_2$ and  $Q_{\alpha, \beta}$ are not dominated by the principal part unless 
$\alpha = \beta = 0$.


The second line of research relevant for us is the study of $n$ - harmonic functions, that is functions defined on the unit polydisc $\mathbb D^n \subset \mathbb C^n$ which are harmonic in each of the $n$ variables $z_1, \ldots, z_n$. Such functions 
appeared already in 1939 (for $n = 2$), see \cite{MZ}. More details on $n$ - harmonic functions can be found in Chapter XVII of \cite{Z} and in \cite{Rud2}. A probabilistic approach to some aspects of $H^p$ theory of $n$ - harmonic functions is given in \cite{GuSt}, more recent paper on $n$ - harmonic functions is \cite{Seur}.

In this paper we merge these two avenues of investigation. We deal with functions $u$ in the unit polydisc which satisfy a system of PDEs: 
$L_{\alpha_j, \beta_j} u = 0$, $1 \leq j \leq n$. Each of the operators $L_{\alpha_j, \beta_j}$ acts on the $z_j = x_j + i y_j$
variable and annihilates the same functions as the Laplacian if $\alpha_j = \beta_j = 0$. Our results extend one dimensional results from \cite{OW}, \cite{KO}  and \cite{OP} on a single operator $L_{\alpha, \beta}$ to
the multidimensional case of a system of operators, these results are also generalizations of results on $n$ - harmonic functions.

The paper is organized as follows. In the present section we establish notation used throughout the paper. In Section \ref{SepSec} we define separately $(\alpha, \beta)$ - harmonic functions in the unit polydisc and review known results in one dimensional case that we need. The rest of Section \ref{SepSec} is devoted to series expansion of separately $(\alpha, \beta)$ - harmonic functions. In Section \ref{HpSec} we obtain
existence and uniqueness result for the Dirichlet problem with continuous boundary data. Next $H^p$ spaces
of separately $(\alpha, \beta)$ - harmonic functions are defined, integral representations are obtained and convergence in 
norm and in the weak$^\ast$ sense is discussed. The final Section \ref{FatouSec} contains results on relevant maximal functions, these are used to derive Fatou type theorems. Our results are on the so called restricted limits, we plan to treat unrestricted linits
in another publication. For a discussion of restricted and non-restricted limits, as well as approach regions, in a more general setting of symmetric spaces, see \cite{St1}.

We use standard notation and terminology. The boundary of the unit disc 
$\mathbb D = \{ z \in \mathbb C : \vert z \vert < 1 \}$ is $\mathbb T = \{ \zeta \in \mathbb C : \vert \zeta \vert = 1 \}$.  
Given $\zeta = e^{i\varphi}$ in $\mathbb T$ we denote the Stolz region at $\zeta$ of aperture $0 < A < +\infty$ by
$$
S_A (e^{i\varphi}) =  \{  r e^{i (\varphi - \theta)} : \vert \theta \vert    \leq A (1-r)    \}.
$$
Non-tangential limit at a boundary point is denoted by $NT - \lim$. 

The unit polydisc $\mathbb D^n$ in $\mathbb C^n$ is the 
Cartesian product of $n$ copies of $\mathbb D$,
and the unit torus $\mathbb T^n$ in $\mathbb C^n$ is the Cartesian product of $n$ copies of $\mathbb T$.
If $n>1$, $\mathbb T^n$ is only a small part of the boundary of $\mathbb D^n$ and  the unit torus is usually called the distinguished boundary  of $\mathbb D^n$. We set ${\bf 1} = (1, \ldots, 1)$.

For $z =  (z_1, \ldots, z_n) \in \mathbb C^n$ we set $ \overline z = (\overline{z_1}, \ldots, \overline{z_n})$. For
$z, w \in \mathbb C^n$ we set $z \cdot w = (z_1 w_1, \ldots, z_n w_n)$.  We also write $z = (z', z_n)$ where 
$z = (z_1, \ldots, z_n)$ is in $\mathbb C^n$ and $z' = (z_1, \ldots, z_{n-1})$ is in $\mathbb C^{n-1}$. Euclidean balls in 
$\mathbb C^n$ are denoted by $B(z, r)$.

We will use the normalized Haar measure $ m_n$ on $\mathbb T^n$, therefore we have $m_n(\mathbb T^n)=1$. The
space of complex Borel measures on $\mathbb T^n$ is denoted by $\mathcal M (\mathbb T^n)$, it is the dual space of 
$C(\mathbb T^n)$. The pairing of duality is given by
$$ \langle \varphi, d\mu \rangle = \int_{\mathbb T^n} \varphi d\mu, \qquad \varphi \in C(\mathbb T^n), \quad
\mu \in \mathcal M (\mathbb T^n).
$$
The total variation of $\mu$ is denoted by $\| \mu \|$, it is equal to $\vert \mu \vert (\mathbb T^n)$.

For $z$ in $\mathbb D^n$ and a function $u : \mathbb D^n \to \mathbb C$
we define $u_z : \mathbb T^n \to \mathbb C$ by $u_z (\zeta) = u(z \cdot \zeta)$.
If $z = (r, r, \ldots, r)$ where $0 \leq r < 1$ we write $u_r$ instead of $u_z$. In particular, if ${\bf r} = (r_1, \ldots, r_n) \in
[0, 1)^n$, then $u_{\bf r}(\zeta) = u(r_1 \zeta_1, \ldots, r_n\zeta_n)$. Also, for $\zeta \in \mathbb T^n$ we define
a function $\zeta {\bf \cdot} u$ on $\mathbb D^n$ by $\zeta {\bf \cdot} u(z) = u(\zeta \cdot z)$.

We use notation $x^+ = \max (x, 0)$ and $x^- = \max (-x, 0)$ for a real number $x$. For $m = (m_1, \ldots, m_n)$ in
$\mathbb Z^n$ we set $m^+ = (m_1^+, \ldots, m_n^+) \in \mathbb Z^n_+$ and similarly
 $m^- = (m_1^-, \ldots, m_n^-) \in \mathbb Z^n_+$. $\vert m \vert = \vert m_1 \vert + \cdots + \vert m_n \vert$
is the order of a multiindex $m \in \mathbb Z^n$. Also, $m! = m_1! \cdot \ldots \cdot m_n!$ for $m \in \mathbb Z^n_+$.
For $p = (p_1, \ldots, p_n)$ in $\mathbb Z_+^n$ and $z \in \mathbb C^n$ we set 
$z^p = z_1^{p_1} \cdot \ldots \cdot z_n^{p_n}$,  $\overline z^p = \overline z_1^{p_1} \cdot \ldots \cdot \overline z_n^{p_n}$.

The order of a multiindex $(p, q)$ of length $2n$ is $\vert (p, q) \vert = \vert p \vert + \vert q \vert$. We set
$$
\partial^{(p, q)} = \frac{\partial^{p_1}}{\partial z_1^{p_1}} \cdots \frac{\partial^{p_n}}{\partial z_n^{p_n}}
 \frac{\partial^{q_1}}{\partial \overline z_1^{q_1}} \cdots \frac{\partial^{q_n}}{\partial \overline z_n^{q_n}}, \quad
\partial^p = \partial^{(p, 0)}, \quad {\overline\partial}^q = \partial^{(0, q)}.
$$
Here $\partial/\partial z_j = 2^{-1}(\partial/\partial x_j - i \partial/\partial y_j)$ and $\partial/\partial \overline z_j = 2^{-1}
(\partial x_j + i \partial y_j)$ are standard first order linear differential operators.

The space $C^\infty (\mathbb D^n)$ is a locally convex complete metrizable topological vector space, it is endowed with the
following family of seminorms:
$$
\| u \|_{p, q, K} = \max_{z \in K} \vert \partial^{(p, q)} u(z) \vert, \qquad K \subset \mathbb D^n \quad \mbox{is compact},
\quad p, q \in \mathbb Z^n_+.
$$

For complex numbers $ a, b$ and $c$ such that $c \not= 0, -1, -2, \ldots$ hypergeometric function
$F(a, b; c; z)$  is defined by
\begin{equation}\label{hypergeom}
F(a, b; c; z)  = \sum_{k=0}^\infty \frac{(a)_k (b)_k}{(c)_k} \frac{z^k}{k!}, \qquad \vert z \vert < 1
\end{equation}
where $(a)_k $ is the Pochhammer symbol: 
$$
(a)_0 = 1 \qquad \mbox{and} \qquad  (a)_k = a (a+1) \cdots (a+k-1)\quad \mbox{\rm for} \quad k = 1, 2, \ldots.
$$     
If $\Re (a + b - c) < 0$ then the series in \eqref{hypergeom} converges absolutely for $\vert z \vert \leq 1$.
More details about the hypergeometric functions can be found in \cite{B}.

\section{Separately $(\alpha, \beta)$ - harmonic functions in $\mathbb D^n$}\label{SepSec}

\begin{definition}\label{defalphabeta}
Let $ \alpha=(\alpha_1, \ldots, \alpha_n)\in \mathbb{C}^n, \beta=(\beta_1, \ldots, \beta_n)\in \mathbb{C}^n$.  
A function $ u $ in $C^2( \mathbb D^n)$ is separately $ (\alpha, \beta)$ - harmonic if $L_{\alpha_j, \beta_j} u=0$ for
$1\leq j\leq n$, where 
$$ L_{\alpha_j, \beta_j} =(1-\vert z_j\vert^2) \frac{\partial^2 }{\partial z_j \partial \overline{z_j}}
+ \alpha_j z_j \frac{\partial }{\partial z_j}
+ \beta_j \overline{z_j}\frac{\partial }{\partial \overline{z_j}}-\alpha_j \beta_j, \qquad 1 \leq j \leq n.$$  
We denote the vector space of all separately $ (\alpha, \beta)$ - harmonic functions in $ \mathbb{D}^n $ by
$sh_{\alpha,\beta}(\mathbb{D}^n )$.
\end{definition}

Note that $L_{\alpha_j, \beta_j}$ is a second order linear partial differential operator which acts only with respect to 
$z_j = x_j + iy_j$ variable, hence the term "separately". The first order coefficients $\alpha_j z_j$ and $\beta_j \overline{z_j}$
are not dominated by $1 - \vert z_j \vert^2$, unless $\alpha_j = \beta_j = 0$, thus although the operator is uniformly elliptic it
is degenerate. If $\alpha_j = \beta_j = 0$, then $L_{\alpha_j, \beta_j} = 4^{-1} (1 - \vert z_j \vert^2) \Delta_j$, where 
$\Delta_j$ is the Laplace operator with respect to $x_j$ and $y_j$ variables. Thus $sh_{0, 0}(\mathbb D^n)$ is the space of
$n$ - harmonic functions. Even in this special case we do not get an algebra: a square of a harmonic function is not a harmonic
function in general.

\begin{lemma}\label{cinfty}
$sh_{\alpha, \beta} (\mathbb D^n) \subset C^\infty (\mathbb D^n)$.
\end{lemma}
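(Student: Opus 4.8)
The plan is to reduce the system $L_{\alpha_j,\beta_j} u = 0$, $1\le j\le n$, to a single elliptic equation and then invoke interior elliptic regularity. First I would rewrite each operator through the real Laplacian: since $\partial^2/\partial z_j\,\partial\overline{z_j} = \tfrac14(\partial^2/\partial x_j^2 + \partial^2/\partial y_j^2) = \tfrac14\Delta_j$, the second-order part of $L_{\alpha_j,\beta_j}$ is $\tfrac14(1-\vert z_j\vert^2)\Delta_j$, while the terms $\alpha_j z_j\,\partial/\partial z_j + \beta_j\overline{z_j}\,\partial/\partial\overline{z_j} - \alpha_j\beta_j$ are of order at most one. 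All coefficients are polynomials in the real coordinates $x_j, y_j$, hence lie in $C^\infty(\mathbb D^n)$; note in particular that the principal part is real.

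Next I would form the operator $L = \sum_{j=1}^n L_{\alpha_j,\beta_j}$. If $u \in sh_{\alpha,\beta}(\mathbb D^n)$ then $L_{\alpha_j,\beta_j}u = 0$ for every $j$, so certainly $Lu = 0$. The point of passing to $L$ is that, although each individual $L_{\alpha_j,\beta_j}$ is degenerate as an operator on $\mathbb R^{2n}$ (its principal symbol vanishes in all directions except the $(x_j,y_j)$ ones), the sum is genuinely elliptic on the open polydisc. Writing $(\xi_j,\eta_j)$ for the cotangent variables dual to $(x_j,y_j)$, the principal symbol of $L$ at a point $z$ is
$$
\sigma(z,\xi,\eta) = -\tfrac14\sum_{j=1}^n (1-\vert z_j\vert^2)(\xi_j^2+\eta_j^2),
$$
which is nonzero for every $(\xi,\eta)\neq 0$ whenever $z \in \mathbb D^n$, since each factor $1-\vert z_j\vert^2$ is strictly positive there. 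Thus $L$ is a second-order linear elliptic operator with $C^\infty$ coefficients on $\mathbb D^n$, uniformly elliptic on every compact subset.

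Finally I would apply the interior elliptic regularity theorem. A function $u \in C^2(\mathbb D^n)$ is in particular a distribution on $\mathbb D^n$, and it satisfies $Lu = 0$ with right-hand side trivially in $C^\infty(\mathbb D^n)$; hence $u \in C^\infty(\mathbb D^n)$. Because smoothness is a local property and ellipticity holds throughout the open polydisc, the degeneration of the symbol as $\vert z_j\vert\to 1$ at the distinguished boundary is irrelevant.

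The step requiring the most care is the bookkeeping of the reduction rather than any deep analysis: one must observe that summing the equations loses no information relevant to the conclusion (we want only smoothness), and that the hypotheses of the regularity theorem are met by a merely $C^2$, that is distributional, solution. The ellipticity computation itself is immediate once the operators are expressed through $\Delta_j$. I would also remark that, since the coefficients of $L$ are in fact real-analytic, the same argument with analytic elliptic regularity upgrades the conclusion to real-analyticity of $u$, although only $C^\infty$ is needed here.
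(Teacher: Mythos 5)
Your proposal is correct and follows essentially the same route as the paper: the paper's proof also forms $L = L_{\alpha_1,\beta_1} + \cdots + L_{\alpha_n,\beta_n}$, observes that it is elliptic with $C^\infty$ coefficients, and concludes by interior elliptic regularity. Your write-up merely supplies the details the paper leaves implicit (the principal symbol computation and the locality of smoothness), which is a harmless elaboration.
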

\begin{proof}
Any separately $(\alpha, \beta)$ - harmonic function $u$ is a solution of a linear elliptic
partial differential equation
$Lu = 0$, where $L = L_{\alpha_1, \beta_1} + \cdots + L_{\alpha_n, \beta_n}$ has $C^\infty$ coefficients. Therefore, by elliptic regularity, $u$ is in $C^\infty (\mathbb D^n)$.
\end{proof}


\begin{theorem} [\cite{KO}, Theorem 1.4 and Theorem 6.4.]  \label{Uab}
Let $\alpha, \beta \in \mathbb C$. Then the function
\begin{equation}\label{ualphabeta}
 u_{\alpha, \beta}(z)= 
 \frac{(1-\vert z\vert^2)^{\alpha+\beta+1}}{(1-z)^{\alpha+1}
(1-\overline{z} )^{\beta+1}}, \qquad \vert z \vert < 1
\end{equation} 
is $(\alpha, \beta)$ - harmonic in $\mathbb D$. Moreover, if $\Re \alpha + \Re \beta>-1$, the following estimate holds:
\begin{equation}\label{L1est}
\frac{1}{2\pi} \int_{-\pi}^\pi \vert u_{\alpha, \beta}(r e^{i\theta})\vert d\theta \leq e^{\frac{\pi}{2}
\vert \Im  \alpha-\Im \beta\vert} 
\frac{\Gamma(\Re \alpha + \Re \beta +1)}{\Gamma^2\left(\frac{\Re \alpha + \Re \beta}{2} +1\right)}, \qquad 0 \leq r < 1.
\end{equation}
\end{theorem}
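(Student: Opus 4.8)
The plan is to establish the two assertions separately: the differential identity $L_{\alpha,\beta}u_{\alpha,\beta}=0$ by a direct computation, and the $L^1$ bound by reducing to a real-parameter function whose integral is evaluated through hypergeometric identities. For the harmonicity I would use logarithmic differentiation. On $\mathbb D$ the factors $1-z$ and $1-\overline z$ are nonvanishing with positive real part, while $1-\vert z\vert^2$ is real and positive, so all three powers are smooth, $(1-z)^{\alpha+1}$ is holomorphic and $(1-\overline z)^{\beta+1}$ antiholomorphic. Writing $s=\alpha+\beta+1$ and $w=\vert z\vert^2$ one gets $\partial u_{\alpha,\beta}/\partial z = u_{\alpha,\beta}\,A$ and $\partial u_{\alpha,\beta}/\partial\overline z = u_{\alpha,\beta}\,B$, where
$$A=-\frac{s\overline z}{1-w}+\frac{\alpha+1}{1-z}, \qquad B=-\frac{sz}{1-w}+\frac{\beta+1}{1-\overline z},$$
so the mixed derivative is $u_{\alpha,\beta}\bigl(AB+\partial A/\partial\overline z\bigr)$ with $\partial A/\partial\overline z=-s(1-w)^{-2}$. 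Substituting into $L_{\alpha,\beta}$ and dividing by $u_{\alpha,\beta}$ reduces the claim to the vanishing of a rational expression. Grouping terms by the denominators $(1-w)^{-1}$, $(1-z)^{-1}$, $(1-\overline z)^{-1}$ and $[(1-z)(1-\overline z)]^{-1}$, the coefficients collapse using $s-\alpha-\beta=1$, $\beta-s=-(\alpha+1)$, $\alpha-s=-(\beta+1)$ together with $(1-z)(1-\overline z)=1-z-\overline z+w$; the leftover constant is $-(\alpha+\beta+1)-\alpha\beta+(\alpha+1)(\beta+1)=0$.

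For the estimate I would first compute the modulus exactly. Writing $1-z=\rho e^{i\psi}$ with $\rho=\vert 1-z\vert$ and $\psi=\arg(1-z)$, and noting that $\Re(1-z)>0$ on $\mathbb D$ forces $\vert\psi\vert<\pi/2$ while $1-\vert z\vert^2$ is real positive, one finds
$$\vert u_{\alpha,\beta}(re^{i\theta})\vert=\frac{(1-r^2)^{\Re\alpha+\Re\beta+1}}{\vert 1-z\vert^{\Re\alpha+\Re\beta+2}}\,e^{\psi(\Im\alpha-\Im\beta)}.$$
Since $\vert\psi\vert<\pi/2$, the exponential factor is at most $e^{\frac{\pi}{2}\vert\Im\alpha-\Im\beta\vert}$. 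Putting $a=(\Re\alpha+\Re\beta)/2>-1/2$, the remaining fraction is exactly the positive real-valued function $u_{a,a}(z)=(1-r^2)^{2a+1}(1-2r\cos\theta+r^2)^{-(a+1)}$, whence
$$\frac{1}{2\pi}\int_{-\pi}^\pi\vert u_{\alpha,\beta}(re^{i\theta})\vert\,d\theta\le e^{\frac{\pi}{2}\vert\Im\alpha-\Im\beta\vert}\,\frac{1}{2\pi}\int_{-\pi}^\pi u_{a,a}(re^{i\theta})\,d\theta.$$

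To bound the last integral I would expand $(1-2r\cos\theta+r^2)^{-(a+1)}=(1-re^{i\theta})^{-(a+1)}(1-re^{-i\theta})^{-(a+1)}$ in binomial series and integrate termwise; only the diagonal survives, giving $\frac{1}{2\pi}\int_{-\pi}^\pi u_{a,a}\,d\theta=(1-r^2)^{2a+1}F(a+1,a+1;1;r^2)$. Euler's transformation $F(a,b;c;z)=(1-z)^{c-a-b}F(c-a,c-b;c;z)$ then converts this to $F(-a,-a;1;r^2)=\sum_{k\ge0}\frac{[(-a)_k]^2}{(k!)^2}r^{2k}$. All coefficients are nonnegative, so this is nondecreasing in $r$; since $\Re((-a)+(-a)-1)=-2a-1<0$ the series converges at $r=1$, and Gauss's summation theorem gives $F(-a,-a;1;1)=\Gamma(2a+1)/\Gamma^2(a+1)$, which is the asserted bound. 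The main obstacle is arranging the harmonicity computation so that the several cancellations become transparent; the estimate itself is a clean chain once one recognizes the reduction to $u_{a,a}$ and applies the Euler--Gauss pair.
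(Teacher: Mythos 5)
Your proposal is mathematically correct, but note that the paper itself offers no proof of this statement: it is imported verbatim from Klintborg--Olofsson (\cite{KO}, Theorems 1.4 and 6.4), so there is no internal argument to compare against. What you have produced is a self-contained derivation, and every step checks out. The logarithmic-differentiation computation is right: with $s=\alpha+\beta+1$ and $w=|z|^2$ one indeed gets $\partial_{\overline z}A=-s(1-w)^{-2}$, the $(1-w)^{-1}$ group collapses to the constant $-s$ via $s-\alpha-\beta=1$, the $(1-z)^{-1}$ and $(1-\overline z)^{-1}$ groups acquire the common factor $-(\alpha+1)(\beta+1)$ via $\alpha-s=-(\beta+1)$ and $\beta-s=-(\alpha+1)$, and the identity $(1-z)(1-\overline z)=1-z-\overline z+w$ merges them with the mixed term to give $+(\alpha+1)(\beta+1)$, leaving exactly your vanishing constant. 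The modulus formula $|u_{\alpha,\beta}|=u_{a,a}\,e^{\psi(\Im\alpha-\Im\beta)}$ with $a=(\Re\alpha+\Re\beta)/2$ and $|\psi|<\pi/2$ is correct (this is where the factor $e^{\frac{\pi}{2}|\Im\alpha-\Im\beta|}$ comes from), and the chain
\begin{equation*}
\frac{1}{2\pi}\int_{-\pi}^{\pi}u_{a,a}(re^{i\theta})\,d\theta=(1-r^2)^{2a+1}F(a+1,a+1;1;r^2)=F(-a,-a;1;r^2)\leq F(-a,-a;1;1)=\frac{\Gamma(2a+1)}{\Gamma^2(a+1)}
\end{equation*}
is a clean use of the diagonal-series computation, Euler's transformation, nonnegativity of the squared real coefficients $[(-a)_k]^2$, and Gauss's summation theorem, with the convergence hypothesis $1+2a>0$ supplied exactly by $\Re\alpha+\Re\beta>-1$. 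This is essentially the same hypergeometric machinery that underlies the cited results in \cite{KO} and \cite{OW}; what your write-up buys, relative to the paper, is that the sharp constant in \eqref{L1est} is exhibited as a genuine hypergeometric value $F(-a,-a;1;1)$ rather than quoted, making the theorem independent of the external reference.
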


We also define, for complex $\alpha, \beta \not\in \mathbb Z^-  = \{-1, -2, \ldots \}$ and $\Re \alpha + \Re \beta > -1$:
\begin{equation}\label{coef}
 c_{\alpha, \beta}=\frac{\Gamma(\alpha+1)\Gamma(\beta+1)}{\Gamma(\alpha + \beta + 1)},
 \end{equation}
\begin{equation}\label{valphabeta}
v_{\alpha, \beta}(z) = c_{\alpha, \beta} u_{\alpha, \beta}(z), \qquad z \in \mathbb D.
\end{equation}

Since, for $\zeta \in \mathbb C$, $\partial_z[u(\zeta z)] = \zeta (\partial_z u) (\zeta z)$ and $\partial_{\overline z} [u(\zeta z)] =
\overline\zeta (\partial_{\overline z} u)(\zeta z)$ we get
\begin{equation*}
L_{\alpha, \beta}[u(\zeta z)] = 
(1 - \vert z \vert^2) \vert \zeta \vert^2  \frac{\partial^2 u}{\partial z \partial \overline z} (\zeta z)
+ \alpha \zeta z (\partial_z u)(\zeta z) +
\beta \overline {\zeta z} (\partial_{\overline z} u)(\zeta z) - \alpha\beta u(\zeta z).
\end{equation*}
A consequence of this formula is the following rotational invariance of $L_{\alpha, \beta}$, which appeared in \cite{KO}: 
$L_{\alpha, \beta}(\zeta \cdot u) = \zeta \cdot (L_{\alpha, \beta} u)$ if $\vert \zeta \vert = 1$. It follows that
\begin{equation}\label{zetamod1}
L_{\alpha_j, \beta_j}(\zeta \cdot u) = \zeta \cdot (L_{\alpha_j, \beta_j} u), \qquad \zeta \in \mathbb T^n, \quad 1 \leq j \leq n.
\end{equation}
Hence,  for $\vert \zeta \vert = 1$, the function $f(z) = u_{\alpha, \beta}(\zeta z)$ is $(\alpha, \beta)$  -  harmonic in 
$\mathbb D$.  On the other hand, taking $\zeta = r$ we obtain
\begin{equation}\label{zetar}
L_{\alpha, \beta}[u(rz)] =  (L_{\alpha, \beta} u)(rz) - (1-r^2) \frac{\partial^2 u}{\partial z \partial \overline z} (rz), \qquad 
0 < r < 1,
\end{equation}
which shows that for $(\alpha, \beta)$ - harmonic function $u$ the function 
$z \mapsto u(rz)$ need not be $(\alpha, \beta)$ - harmonic.

%
From this point on we assume that $\alpha, \beta \in \mathbb C^n$ satisfy conditions 
\begin{equation}\label{conditions}
\Re \alpha_j + \Re \beta_j  > -1 \quad  \mbox{\rm and} \quad \alpha_j, \beta_j \not\in \mathbb Z^- \qquad 
\mbox{\rm for all} \qquad 1 \leq j \leq n.
\end{equation}

\begin{definition}\label{alphabetakernel}
We define $(\alpha, \beta)$ - Poisson kernel by 
\begin{equation}\label{abkernel}
 P_{\alpha,\beta} (z,\zeta) = \prod_{j=1}^n  c_{\alpha_j, \beta_j}
\frac{(1-\vert z_j\vert^2)^{\alpha_j+\beta_j+1}}{(1-z_j \overline{\zeta_j})^{\alpha_j+1}
(1-\overline{z_j} \zeta_j)^{\beta_j+1}}, \quad z \in \mathbb D^n, 
\zeta \in \mathbb T^n,
\end{equation}
where the constants $ c_{\alpha_j, \beta_j}$ are defined in \eqref{coef}.
\end{definition}
If $ \Re\alpha_j=\alpha_j=\beta_j>-\frac{1}{2} $    for all $ j=1,\ldots, n, $ then $ P_{\alpha, \beta}$ is real and positive. 
This case will be important in Section \ref{FatouSec}. On the other hand, the case $n=1$, $\alpha = 0$ and $\beta > -1$ appeared in \cite{OW}. Setting
\begin{equation}\label{functionV}
 V_{\alpha, \beta}(z) = \prod_{j=1}^n v_{\alpha_j, \beta_j}(z_j), \qquad z \in \mathbb  D^n
\end{equation}
we can write, for $z \in \mathbb D^n$ and  $\zeta \in \mathbb T^n$:
\begin{equation}\label{prodpoisson}
P_{\alpha, \beta}(z, \zeta) = \prod_{j=1}^n P_{\alpha_j, \beta_j}(z_j, \zeta_j) = \prod_{j=1}^n 
v_{\alpha_j, \beta_j}(z_j \overline \zeta_j) = V_{\alpha, \beta}(z \cdot \overline\zeta).
\end{equation}
By Theorem  \ref{Uab} and \eqref{zetamod1} we see that  $u_{\alpha_j, \beta_j}(\overline \zeta_j z_j)$ is 
$(\alpha_j, \beta_j)$ - harmonic  for $1 \leq j \leq n$. Thus the following lemma immediately follows.
\begin{lemma}\label{Psep}
The $(\alpha, \beta)$ - Poisson kernel $P_{\alpha, \beta}(z, \zeta)$ is separately $(\alpha, \beta)$ - harmonic 
with respect to $z \in \mathbb D^n$ for each fixed $\zeta \in \mathbb T^n$.
\end{lemma}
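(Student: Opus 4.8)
The plan is to exploit the product structure \eqref{prodpoisson} of the kernel, reducing the $n$-dimensional claim to $n$ one-dimensional assertions that have already been recorded in the text. Fix an index $k$ with $1 \leq k \leq n$; by Definition \ref{defalphabeta} it suffices to verify $L_{\alpha_k, \beta_k} P_{\alpha, \beta} = 0$, where the operator $L_{\alpha_k, \beta_k}$ differentiates only in the $z_k$ and $\overline{z_k}$ variables.

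First I would write $P_{\alpha, \beta}(z, \zeta) = \prod_{j=1}^n v_{\alpha_j, \beta_j}(z_j \overline{\zeta_j})$ as in \eqref{prodpoisson} and observe that every factor with $j \neq k$ is independent of $z_k$. Since $L_{\alpha_k, \beta_k}$ is a linear differential operator acting only on the $z_k$ variable, with lowest-order term the multiplication by the constant $-\alpha_k \beta_k$, these factors pull out of the operator, giving
\[
L_{\alpha_k, \beta_k} P_{\alpha, \beta}(z, \zeta) = \Big( \prod_{j \neq k} v_{\alpha_j, \beta_j}(z_j \overline{\zeta_j}) \Big)\, L_{\alpha_k, \beta_k}\big[ v_{\alpha_k, \beta_k}(z_k \overline{\zeta_k}) \big].
\]
Thus the claim reduces to showing that the single-variable function $z_k \mapsto v_{\alpha_k, \beta_k}(z_k \overline{\zeta_k})$ is annihilated by $L_{\alpha_k, \beta_k}$.

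Next, because $v_{\alpha_k, \beta_k} = c_{\alpha_k, \beta_k} u_{\alpha_k, \beta_k}$ with $c_{\alpha_k, \beta_k}$ a constant and $L_{\alpha_k, \beta_k}$ linear, it is enough to treat $u_{\alpha_k, \beta_k}(\overline{\zeta_k} z_k)$. Here I would invoke Theorem \ref{Uab}, which gives that $u_{\alpha_k, \beta_k}$ is $(\alpha_k, \beta_k)$-harmonic in $\mathbb D$, together with the rotational invariance \eqref{zetamod1}: since $\zeta_k \in \mathbb T$ has $\vert \overline{\zeta_k} \vert = 1$, the rotated function satisfies $L_{\alpha_k, \beta_k}[ u_{\alpha_k, \beta_k}(\overline{\zeta_k} z_k) ] = (L_{\alpha_k, \beta_k} u_{\alpha_k, \beta_k})(\overline{\zeta_k} z_k) = 0$. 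This is exactly the one-variable fact noted just before the lemma, so the displayed factorization vanishes and $L_{\alpha_k, \beta_k} P_{\alpha, \beta} = 0$.

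There is no genuine obstacle here: the entire content is carried by Theorem \ref{Uab} and the rotational invariance \eqref{zetamod1}, and the only point requiring care is the legitimacy of separating the product, which holds precisely because each $L_{\alpha_k, \beta_k}$ involves a single complex variable while all other factors are constant in it. Since $k$ was arbitrary, all $n$ operators annihilate $P_{\alpha, \beta}$, establishing that $P_{\alpha, \beta}$ is separately $(\alpha, \beta)$-harmonic in $z$ for each fixed $\zeta \in \mathbb T^n$.
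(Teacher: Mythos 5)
Your proof is correct and takes essentially the same route as the paper: the paper likewise deduces the lemma from Theorem \ref{Uab} combined with the rotational invariance \eqref{zetamod1}, viewing the kernel via \eqref{prodpoisson} as a product of one-variable $(\alpha_j, \beta_j)$-harmonic factors. The only difference is that you spell out the step the paper leaves implicit in its ``immediately follows,'' namely that the factors independent of $z_k$ pull out of $L_{\alpha_k, \beta_k}$ because that operator differentiates only in $z_k$, $\overline{z_k}$ and its zero-order term is a constant multiplier.
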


\begin{definition}\label{Poissonext}
We define $(\alpha, \beta)$ - Poisson integral of $\mu \in \mathcal M(\mathbb T^n)$ by the following formula:
\begin{equation}\label{poisson}
P_{\alpha, \beta}[d\mu](z) = \int_{\mathbb T^n} P_{\alpha, \beta}(z, \zeta) d\mu(\zeta), \qquad z \in \mathbb D^n.
\end{equation}
For $f \in L^1(\mathbb T^n)$ we define $P_{\alpha, \beta}[f] = P_{\alpha, \beta}[fdm_n]$, that is
\begin{equation}\label{poissonL1}
P_{\alpha, \beta}[f](z) = \int_{\mathbb T^n} P_{\alpha, \beta}(z, \zeta) f(\zeta) dm_n(\zeta), \qquad z \in \mathbb D^n.
\end{equation}
\end{definition}
We use the same notation $P_{\alpha, \beta}$ for the operator and its integral kernel, this abuse of notation is quite harmless.

Let $\mu \in \mathcal M(\mathbb T^n)$ and ${\bf r} = (r_1, \ldots, r_n) \in [0, 1)^n$. For $u = P_{\alpha, \beta}[d\mu]$
we have
\begin{equation}\label{convPmu1}
u(r_1 e^{i t_1}, \ldots, r_n e^{i t_n}) = \int_{\mathbb T^n} \prod_{j=1}^n v_{\alpha_j, \beta_j} (r_j e^{i (t_j - \zeta_j)}) d\mu (\zeta)
\end{equation}
and therefore
\begin{equation}\label{convPmu2}
(P_{\alpha, \beta}[d\mu])_{\bf r} = (V_{\alpha, \beta})_{\bf r} \ast d\mu
\end{equation}
where convolution is taken with respect to the group structure of $\mathbb T^n$. Taking $d\mu = f dm_n$ in 
\eqref{convPmu2} we obtain
\begin{equation}\label{convPf}
(P_{\alpha, \beta}[f])_{\bf r} = (V_{\alpha, \beta})_{\bf r} \ast f, \qquad f \in L^1(\mathbb T^n).
\end{equation}

From \eqref{ualphabeta}, \eqref{L1est},
\eqref{coef}, \eqref{valphabeta} and \eqref{functionV} using  Fubini's theorem we obtain
\begin{equation}\label{L1V}
\|  (V_{\alpha, \beta})_{\bf r} \|_{L^1(\mathbb T^n)} \leq K(\alpha, \beta), \qquad {\bf r} \in [0, 1)^n
\end{equation}
where 
\begin{equation}\label{Kalphabeta}
K(\alpha, \beta) = e^{ \frac{\pi}{2} \sum_{j=1}^n \mid \Im \alpha_j - \Im \beta_j \mid } \prod_{j=1}^n \left\vert
 \frac{\Gamma(\alpha_j + 1)\Gamma(\beta_j + 1)}{\Gamma(\alpha_j + \beta_j + 1)} \right\vert
\frac{\Gamma(\Re \alpha_j + \Re \beta_j +1)}{\Gamma^2 \left(\frac{\Re \alpha_j + \Re \beta_j}{2} +1\right)}.
\nonumber
\end{equation}
The inequality \eqref{L1V} can also be written in the form
\begin{equation}\label{L1p}
\int_{\mathbb{T}^n}\vert P_{\alpha,\beta}(z, \zeta) \vert dm_n(\zeta) \leq K(\alpha, \beta), \qquad z \in \mathbb D^n.
\end{equation}

An immediate consequence of Lemma \ref{Psep} is the next proposition.
\begin{proposition}\label{shPofmeasures}
For any complex Borel measure $\mu$ on $\mathbb T^n$ its $(\alpha, \beta)$ - Poisson integral 
$u = P_{\alpha, \beta} [d\mu]$ is separately $(\alpha, \beta)$ - harmonic function on $\mathbb D^n$.
\end{proposition}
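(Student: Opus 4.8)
The plan is to reduce the assertion to Lemma~\ref{Psep} by differentiating under the integral sign. Fix an index $j \in \{1, \ldots, n\}$; it suffices to prove that $L_{\alpha_j, \beta_j} u = 0$ on $\mathbb D^n$. As this is a pointwise statement, I would work over an arbitrary compact set $K \subset \mathbb D^n$ and choose $\rho \in (0, 1)$ with $\sup_{z \in K} |z_k| \leq \rho$ for every $k$. For $z \in K$ and $\zeta \in \mathbb T^n$ one has $|z_k \overline{\zeta_k}| = |z_k| \leq \rho$, so each point $1 - z_k \overline{\zeta_k}$ and $1 - \overline{z_k} \zeta_k$ lies in a fixed compact subset of the open right half-plane that avoids the origin. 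Hence the principal powers appearing in \eqref{abkernel} are well defined, the denominators are bounded away from zero in modulus, and $z \mapsto P_{\alpha, \beta}(z, \zeta)$ is smooth on a neighbourhood of $K$ with all partial derivatives up to second order bounded on $K$ by a constant $C_K$ independent of $\zeta \in \mathbb T^n$.

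With this uniform control in hand I would invoke the standard theorem on differentiation of a parameter-dependent integral. Since $\mu$ is a finite complex measure, the constant majorant $C_K$ is $|\mu|$-integrable, so the first- and mixed second-order $z_j$, $\overline{z_j}$ derivatives of
\[
u(z) = \int_{\mathbb T^n} P_{\alpha, \beta}(z, \zeta) \, d\mu(\zeta)
\]
may be computed by differentiating inside the integral. Because the coefficients $(1 - |z_j|^2)$, $\alpha_j z_j$, $\beta_j \overline{z_j}$ and $-\alpha_j \beta_j$ of $L_{\alpha_j, \beta_j}$ are smooth, the whole operator passes through the integral, giving
\[
L_{\alpha_j, \beta_j} u(z) = \int_{\mathbb T^n} \bigl( L_{\alpha_j, \beta_j} P_{\alpha, \beta}(\cdot, \zeta) \bigr)(z) \, d\mu(\zeta), \qquad z \in K.
\]

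Finally, Lemma~\ref{Psep} says that $P_{\alpha, \beta}(\cdot, \zeta)$ is separately $(\alpha, \beta)$-harmonic for each fixed $\zeta$, so the integrand vanishes identically and $L_{\alpha_j, \beta_j} u \equiv 0$ on $K$, hence on all of $\mathbb D^n$. Since $j$ was arbitrary, $u \in sh_{\alpha, \beta}(\mathbb D^n)$; its required $C^2$ (indeed $C^\infty$) regularity is already supplied by Lemma~\ref{cinfty}. The only genuine point to verify is the interchange of $L_{\alpha_j, \beta_j}$ with the integral, and I expect that step to take up most of the write-up. It is nonetheless routine: the singularities of the Poisson kernel are confined to the torus $\mathbb T^n \subset \partial \mathbb D^n$ and so are bounded away on any compact subset of the open polydisc, which is exactly what makes the uniform derivative bounds above available. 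Note that the global estimate \eqref{L1p} is not needed here, as it governs the $L^1$ size of the kernel for boundary purposes rather than interior differentiability.
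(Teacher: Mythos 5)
Your proof is correct and takes the same route as the paper, which offers no written proof at all but simply declares the proposition ``an immediate consequence of Lemma~\ref{Psep}''; your differentiation-under-the-integral-sign argument, with the uniform bounds on compact subsets of $\mathbb D^n$, is precisely the routine justification hiding behind that word. One small caveat: citing Lemma~\ref{cinfty} for the $C^2$ regularity of $u$ is mildly circular, since membership in $sh_{\alpha,\beta}(\mathbb D^n)$ already presupposes $C^2$ by Definition~\ref{defalphabeta} --- but this is harmless, because your uniform derivative bounds combined with dominated convergence give continuity of the derivatives of $u$ directly, so the regularity is supplied by your own argument rather than by that lemma.
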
 

Let $ \varphi \in C(\mathbb T)$. Consider the following Dirichlet problem: Find a  function $u$ in 
$C(\overline{\mathbb D})$ such that $L_{\alpha, \beta} u = 0$ in $\mathbb D$ and $u = \varphi$ on $\mathbb T$.

\begin{theorem}[\cite{KO}, Theorem 7.1]\label{KODP}
The above Dirichlet problem has a unique solution $u \in C(\overline{\mathbb D})$, it is given by the following formula:
 \begin{equation}\label{Dirpr}
u(z) = 
\begin{cases}
P_{\alpha, \beta}[\varphi](z), & z \in \mathbb D \\
\varphi (z), & z \in \mathbb T.
\end{cases}
\end{equation}
 \end{theorem}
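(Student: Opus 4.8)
The plan is to prove existence by checking that the displayed function solves the problem, and to get uniqueness by a reduction to a vanishing statement. For existence the interior equation comes for free: applying the $n=1$ instance of Proposition~\ref{shPofmeasures} to the measure $d\mu=\varphi\,dm_1$ shows that $u=P_{\alpha,\beta}[\varphi]$ satisfies $L_{\alpha,\beta}u=0$ in $\mathbb D$. The entire content of the existence half is therefore the boundary behaviour, and I would show that $u$ extends continuously to $\overline{\mathbb D}$ with $\lim_{z\to\zeta_0}u(z)=\varphi(\zeta_0)$ for every $\zeta_0\in\mathbb T$, i.e.\ that $P_{\alpha,\beta}(z,\cdot)$ behaves like an approximate identity as $z\to\mathbb T$.

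Concretely, I would first compute $P_{\alpha,\beta}[1]$. By the $n=1$ instance of \eqref{convPf} (with $f\equiv1$) this function is radial, and expanding the two kernel factors in binomial series and integrating term by term over $\mathbb T$ identifies it with $c_{\alpha,\beta}(1-|z|^2)^{\alpha+\beta+1}F(\alpha+1,\beta+1;1;|z|^2)$, equivalently $c_{\alpha,\beta}F(-\alpha,-\beta;1;|z|^2)$ by Euler's transformation. Since $\Re\alpha+\Re\beta>-1$ by \eqref{conditions}, the transformed series converges at the boundary and Gauss's summation formula gives $F(-\alpha,-\beta;1;1)=\Gamma(\alpha+\beta+1)/(\Gamma(\alpha+1)\Gamma(\beta+1))=c_{\alpha,\beta}^{-1}$, so $P_{\alpha,\beta}[1](z)\to1$ as $|z|\to1$. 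Writing $u(z)-\varphi(\zeta_0)=\int_{\mathbb T}P_{\alpha,\beta}(z,\zeta)\bigl(\varphi(\zeta)-\varphi(\zeta_0)\bigr)dm_1(\zeta)+\varphi(\zeta_0)\bigl(P_{\alpha,\beta}[1](z)-1\bigr)$, the last term tends to $0$; the integral I would split at $\{|\zeta-\zeta_0|<\delta\}$, bounding the near part by $\varepsilon$ times the uniform constant $K(\alpha,\beta)$ from \eqref{L1p} and controlling the far part with the elementary estimate $|P_{\alpha,\beta}(z,\zeta)|\le C(1-|z|^2)^{\Re\alpha+\Re\beta+1}|1-z\overline\zeta|^{-\Re\alpha-1}|1-\overline z\zeta|^{-\Re\beta-1}$ (the imaginary parts of the exponents only contribute bounded argument factors because $\Re(1-z\overline\zeta)>0$), which forces the far integral to $0$ as $z\to\zeta_0$ since the numerator vanishes while the denominators stay bounded below.

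The hard part is uniqueness. It suffices to show that $w\in C(\overline{\mathbb D})$ with $L_{\alpha,\beta}w=0$ and $w|_{\mathbb T}=0$ must vanish, since for any solution $u$ the difference $w=u-P_{\alpha,\beta}[\varphi]$ has these properties. The obstruction to a one-line maximum-principle argument is that $L_{\alpha,\beta}$ is only uniformly elliptic on compact subsets of $\mathbb D$ (it degenerates at $\mathbb T$, cf.\ \eqref{zetar}), carries a zero-order term $-\alpha\beta$, and in general has complex coefficients, so the classical weak maximum principle does not apply to $w$ directly. For real $\alpha,\beta$ with $\alpha\beta\ge0$ one can salvage it: the zero-order coefficient is then $\le0$, the weak maximum principle on each subdisc $\{|z|\le\rho\}$ bounds $\max_{|z|\le\rho}|w|$ by $\max_{|z|=\rho}|w|$, and letting $\rho\to1$ (where the latter tends to $0$ by continuity of $w$ and $w|_{\mathbb T}=0$) gives $w\equiv0$. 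For the general complex case I would instead decompose $w$ into angular Fourier modes $w(re^{i\theta})=\sum_{m\in\mathbb Z}w_m(r)e^{im\theta}$; separating variables turns $L_{\alpha,\beta}w=0$ into, for each $m$, a second-order linear ODE of hypergeometric type for $w_m$, whose solution regular at the singular point $r=0$ is unique up to a scalar and is a specific hypergeometric function. The condition $w|_{\mathbb T}=0$ forces $w_m(1)=0$ for every $m$, and this second, independent constraint kills the remaining scalar, so every mode vanishes and hence $w\equiv0$. I expect the genuine technical work to lie in this last route: carrying out the hypergeometric ODE analysis and rigorously justifying the mode-by-mode passage to the boundary from the single hypothesis $w|_{\mathbb T}=0$.
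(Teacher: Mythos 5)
The paper itself gives no proof of this statement: it is imported verbatim from \cite{KO} (Theorem 7.1), and the polydisc results built on it (Theorems \ref{nepgran}, \ref{nepprod}, Proposition \ref{Dirchndim}) use it as a black box. So the only comparison available is with the cited source and with the machinery this paper develops around it, and on that score your plan is essentially the standard (and the cited) route: existence via the approximate-identity behaviour of the kernel, uniqueness via angular Fourier modes and the classification of $m$-homogeneous $(\alpha,\beta)$-harmonic functions. Your existence half is correct and complete in outline: the computation $P_{\alpha,\beta}[1](z)=c_{\alpha,\beta}(1-\vert z\vert^2)^{\alpha+\beta+1}F(\alpha+1,\beta+1;1;\vert z\vert^2)=c_{\alpha,\beta}F(-\alpha,-\beta;1;\vert z\vert^2)\to 1$ via term-by-term integration, Euler's transformation and Gauss's formula is valid under \eqref{conditions}, and the near/far splitting with \eqref{L1p} and the modulus estimate on the kernel is exactly the argument the paper uses for Theorem \ref{localcont}. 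For uniqueness, the mode decomposition you propose is precisely Lemma \ref{alphabetamhom} plus Proposition \ref{Reprmhom} in dimension one, so you could cite those instead of redoing the hypergeometric ODE analysis.

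There is, however, one genuine gap at the crux of uniqueness. Writing $w_m(z)=c_m F_{m^+,m^-}(\vert z\vert^2)z^{(m)}$, the boundary condition $w_m=0$ on $\mathbb T$ kills $c_m$ \emph{only if} $F_{m^+,m^-}(1)\neq 0$, and you assert this ("kills the remaining scalar") without proof; it is not automatic, it is the single point where the hypotheses \eqref{conditions} do real work. By Gauss, $F(p-\beta,q-\alpha;p+q+1;1)=\Gamma(p+q+1)\Gamma(\alpha+\beta+1)\big/\big[\Gamma(q+\beta+1)\Gamma(p+\alpha+1)\big]$, which is finite and nonzero precisely because $\Re\alpha+\Re\beta>-1$ and $\alpha,\beta\notin\mathbb Z^-$. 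If that check fails, so does the theorem: for $\alpha=-2$, $\beta=2$ the function $w(z)=(1-\vert z\vert^2)z$ satisfies $L_{-2,2}w=0$, is continuous on $\overline{\mathbb D}$ and vanishes on $\mathbb T$, yet $w\not\equiv 0$. So this nonvanishing must be stated and proved, not presumed. A second, more minor error: your maximum-principle remark for "real $\alpha,\beta$ with $\alpha\beta\geq 0$" is wrong as stated, because $\alpha z\partial_z+\beta\overline z\partial_{\overline z}=\tfrac{\alpha+\beta}{2}(x\partial_x+y\partial_y)+i\tfrac{\alpha-\beta}{2}(y\partial_x-x\partial_y)$ has a nonreal coefficient whenever $\alpha\neq\beta$, so the equations for $\Re w$ and $\Im w$ couple and the scalar weak maximum principle does not apply; only the case $\alpha=\beta$ real can be handled that way. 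Since your general argument does not rely on this remark, it is harmless, but it should be corrected or removed.
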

We generalize this result to separately $(\alpha, \beta)$ - harmonic functions with continuous boundary data on $\mathbb T^n$,
see Proposition \ref{Dirchndim} below.

\begin{theorem}[\cite{KO}, Theorems 5.1 and 5.3] \label{expansion}
Let $u$ be $(\alpha, \beta)$ - harmonic in $\mathbb D$. Then
\begin{align}\label{exp}
u(z)  = & \sum_{k=0}^\infty \frac{\partial^k u}{k!}(0) F(-\alpha, k - \beta; k+1; \vert z \vert^2) z^k \nonumber
\\
& +  \sum_{k=1}^\infty \frac{\overline\partial^k u}{k!}(0) F(k - \alpha, -\beta ; k+1; \vert z \vert^2) \overline z^k,
\qquad \vert z \vert < 1.
\end{align}
Convergence of the above series is in the topology of $C^\infty (\mathbb D)$.
\end{theorem}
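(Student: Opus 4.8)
The plan is to exploit the rotational invariance \eqref{zetamod1} of $L_{\alpha,\beta}$ to reduce the problem to a one-parameter family of ordinary differential equations, one for each angular frequency. First I would decompose $u$ into its angular Fourier modes: writing $z = re^{i\theta}$, set $a_m(r) = \frac{1}{2\pi}\int_0^{2\pi} u(re^{i\theta}) e^{-im\theta}\, d\theta$, so that $u(re^{i\theta}) = \sum_{m\in\mathbb Z} a_m(r) e^{im\theta}$. Because $L_{\alpha,\beta}$ commutes with rotations, it preserves each angular frequency, and hence every mode $a_m(r)e^{im\theta}$ is again $(\alpha,\beta)$ - harmonic. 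Equivalently, averaging $\zeta\cdot u$ against $\zeta^{-m}$ over $\zeta\in\mathbb T$ and invoking \eqref{zetamod1} shows that the projection of $u$ onto frequency $m$ solves $L_{\alpha,\beta}=0$.

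Next I would solve the mode equations. For $m \geq 0$ I look for the mode in the form $f(|z|^2) z^m$; a direct computation of $L_{\alpha,\beta}[f(|z|^2)z^m]$ in the variable $s = |z|^2$ yields
\begin{equation*}
s(1-s) f''(s) + \bigl[(m+1) - (m+1-\alpha-\beta)s\bigr] f'(s) + \alpha(m-\beta) f(s) = 0,
\end{equation*}
which is the hypergeometric equation with parameters $a = -\alpha$, $b = m-\beta$, $c = m+1$. Since $c = m+1$ is a positive integer, the second, linearly independent solution is singular at $s=0$ (it carries a factor $s^{-m}$ or a logarithm), so smoothness of $u$ at the origin (Lemma \ref{cinfty}) forces the mode to be a multiple of the regular solution $F(-\alpha, m-\beta; m+1; |z|^2) z^m$. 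For $m < 0$, writing $k = -m$ and seeking the mode as $g(|z|^2)\overline z^k$, the symmetry $z \leftrightarrow \overline z$, $\alpha \leftrightarrow \beta$ of $L_{\alpha,\beta}$ turns the computation into the previous one with $\alpha,\beta$ exchanged, producing the regular solution $F(k-\alpha, -\beta; k+1; |z|^2)\overline z^k$.

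To identify the constants I would compare Taylor coefficients at the origin. Each candidate mode equals $c_m\, z^m\bigl(1 + O(|z|^2)\bigr)$ (respectively $c_{-k}\overline z^k\bigl(1+O(|z|^2)\bigr)$), and the correction terms carry factors $(z\overline z)^j$, which never produce a pure power of $z$ or of $\overline z$. Hence the coefficient of the pure monomial $z^m$ in the Taylor expansion of $u$ at $0$ in the variables $z,\overline z$ is exactly $c_m$, giving $c_m = \frac{\partial^m u}{m!}(0)$, and symmetrically $c_{-k} = \frac{\overline\partial^k u}{k!}(0)$. Assembling the modes (with $m=0$ falling into the first sum) then gives precisely the two series in \eqref{exp}.

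The main obstacle is the convergence statement, namely that this series converges to $u$ in the topology of $C^\infty(\mathbb D)$. The partial sums are exactly the angular Fourier partial sums of the smooth function $u$, so I would prove convergence by combining smoothness of $u$ with Parseval's identity: integrating by parts in $\theta$ shows that for every compact set and every $N$ and $k$ the quantities $|m|^N \partial_r^k a_m(r)$ are uniformly bounded, which makes the series and all its $\partial_r$, $\partial_\theta$ derivatives converge uniformly on compact subsets of $\mathbb D\setminus\{0\}$; the coordinate singularity of polar coordinates at the origin is handled separately, noting that each term is smooth there and only finitely many modes contribute to any prescribed finite order Taylor jet. An alternative, more quantitative route is to bound the hypergeometric factors uniformly in $m$ (using $F(-\alpha, m-\beta; m+1; s) \to (1-s)^{\alpha}$ as $m \to \infty$, with analogous control of derivatives) together with Cauchy type estimates on the coefficients $\frac{\partial^m u}{m!}(0)$ coming from the real analyticity of solutions of the elliptic equation $L_{\alpha,\beta}u=0$.
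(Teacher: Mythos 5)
Your proposal follows essentially the same architecture as the paper's treatment of this result (the paper cites the statement from \cite{KO} and reproves its $n$-dimensional generalization, Theorem \ref{expn}, by exactly this method): decompose $u$ into angular-homogeneous modes, use the rotational invariance \eqref{zetamod1} to see that each mode is again $(\alpha,\beta)$-harmonic (Lemma \ref{alphabetamhom}), identify each $m$-homogeneous solution as a constant multiple of $F(-\alpha,m-\beta;m+1;\vert z\vert^2)z^m$, respectively $F(k-\alpha,-\beta;k+1;\vert z\vert^2)\overline z^k$ (Proposition \ref{Reprmhom}), and read off the constants from the Taylor expansion at $0$. You actually go one step further than the paper, which imports the homogeneous characterization from \cite{KO}: your derivation of the hypergeometric equation with parameters $a=-\alpha$, $b=m-\beta$, $c=m+1$ is correct, and the one-dimensionality of the space of solutions regular at $s=0$ can be made airtight (without discussing logarithmic second solutions) by noting that the power series recurrence $(q+1)(q+m+1)c_{q+1}=(q-\alpha)(q+m-\beta)c_q$ has nonvanishing left-hand coefficients, so $c_0$ determines everything.

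The one genuine soft spot is the $C^\infty(\mathbb D)$ convergence near the origin. Your bounds $\vert m\vert^N\partial_r^k a_m(r)=O(1)$ are polar-coordinate estimates, and these do not control Cartesian derivatives near $r=0$: for example $\partial_x u_m$ contains the term $-\frac{\sin\theta}{r}\,\partial_\theta u_m = -\frac{\sin\theta}{r}\, im\, a_m(r)e^{im\theta}$, which your stated bounds leave uncontrolled as $r\to 0$. The jet argument you offer as the fix (only modes with $\vert m\vert\le N$ have nonzero $N$-jet at $0$) constrains behavior exactly at the origin but says nothing about uniform convergence of derivatives in a neighborhood of the origin, so as written this step does not close. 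The paper's Proposition \ref{Fouriervector} avoids the issue by never passing to polar coordinates in $z$: it writes $u_m(z)=\int_{\mathbb T}\overline\zeta^{\,m} u(\zeta z)\,dm_1(\zeta)$, differentiates under the integral sign in $z,\overline z$, and integrates by parts in the rotation parameter of $\zeta$, yielding the bound \eqref{estcomp} on $\vert\partial^{(\gamma,\delta)}u_m\vert$ uniformly on compact subsets that may contain the origin. Your argument is repaired by exactly this substitution (your integration by parts in $\theta$ becomes integration by parts in the rotation angle), so the gap is local and easily fixable, but it does need fixing; your "alternative quantitative route" via real analyticity and Cauchy estimates would also work but is heavier than what is needed.
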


Our first task is to extend Theorem \ref{expansion} to separately $(\alpha, \beta)$ - harmonic functions.

We say $(p, q) \in \mathbb Z^n_+ \times \mathbb Z^n_+$ is pure if 
$p \cdot q = (p_1 q_1, \ldots, p_n q_n) = \bf 0$. We denote the set of all pure multiindexes $(p, q)$ by $H$. We will use
without comment the natural bijection $\mathbb Z^n \ni m \mapsto (m^+, m^-) \in H$ to change the index of summation.
Let $z^{(m)}$ denote $z^{m^+} {\overline z}^{m^-}$ where $m \in \mathbb Z^n$ and $z \in \mathbb C^n$.

Let us assume, for a moment, that $n=1$. Then for a pure $(p, q)$, which in this case means $p=0$ or $q=0$, we set
$$
F_{p,q} (z) = F(p-\beta, q - \alpha; p + q + 1; z), \qquad \vert z \vert < 1.
$$

With this notation at hand we can write \eqref{exp} in the following form
\begin{equation}\label{exp1}
u(z) = \sum_{(p,q) \in H} \frac{ \partial^{(p, q)}u}{p!q!} (0) F_{p,q}(\vert z \vert^2) z^p \overline z^q, 
\qquad \vert z \vert < 1.
\end{equation}

Returning to the general case $n \geq 1$, for $(p, q) \in H$ we define
\begin{equation}\label{Fpq}
\mathcal F_{p, q} (z) = \prod_{j=1}^n F_{p_j, q_j} ( \vert z_j \vert^2), \qquad z \in \mathbb D^n.
\end{equation}

\begin{theorem}\label{expn}
Let $u$ be separately $(\alpha, \beta)$ - harmonic in $\mathbb D^n$. Then
\begin{equation}\label{exp1}
u(z) = \sum_{(p,q) \in H} \frac{ \partial^{(p, q)}u}{p!q!} (0) \mathcal F_{p,q} (z) z^p \overline z^q, 
\qquad z \in \mathbb D^n
\end{equation}
and convergence is in the topology of the space $C^\infty (\mathbb D^n)$.
\end{theorem}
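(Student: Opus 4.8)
The plan is to reduce to the one–dimensional expansion of Theorem \ref{expansion} by induction on $n$, peeling off one variable at a time. The base case $n=1$ is the formula recorded just before \eqref{Fpq}. For the inductive step, write $z=(z_1,z')$ with $z'=(z_2,\dots,z_n)\in\mathbb D^{n-1}$, fix $z'$, and observe that since $L_{\alpha_1,\beta_1}$ acts only in the $z_1$ variable the slice $z_1\mapsto u(z_1,z')$ is $(\alpha_1,\beta_1)$–harmonic in $\mathbb D$. Theorem \ref{expansion} then gives
\[
u(z_1,z') = \sum_{(p_1,q_1)} \frac{1}{p_1!\,q_1!}\, a_{p_1,q_1}(z')\, F_{p_1,q_1}(|z_1|^2)\, z_1^{p_1}\overline{z_1}^{q_1},
\]
where $a_{p_1,q_1}(z')=\big(\partial_{z_1}^{p_1}\overline\partial_{z_1}^{q_1}u\big)(0,z')$ and the sum runs over pure $(p_1,q_1)$.

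The first key point is that each coefficient function $a_{p_1,q_1}$ lies in $sh_{\alpha',\beta'}(\mathbb D^{n-1})$, where $(\alpha',\beta')=((\alpha_2,\dots,\alpha_n),(\beta_2,\dots,\beta_n))$ still satisfies \eqref{conditions}. Indeed, for $2\le j\le n$ the operator $L_{\alpha_j,\beta_j}$ commutes with $\partial_{z_1}$ and $\partial_{\overline z_1}$, so $L_{\alpha_j,\beta_j}\big(\partial_{z_1}^{p_1}\overline\partial_{z_1}^{q_1}u\big)=\partial_{z_1}^{p_1}\overline\partial_{z_1}^{q_1}\big(L_{\alpha_j,\beta_j}u\big)=0$, and restricting to the fixed value $z_1=0$ leaves an equation in $z'$ alone. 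Applying the inductive hypothesis to $a_{p_1,q_1}$ and substituting into the displayed formula yields the claimed expansion: the mixed–derivative identity $\partial^{(p',q')}_{z'}a_{p_1,q_1}(0)=\partial^{(p,q)}u(0)$ (with $(p,q)=((p_1,p'),(q_1,q'))$) matches the coefficients, and $\mathcal F_{p,q}=F_{p_1,q_1}\prod_{j\ge2}F_{p_j,q_j}$ matches \eqref{Fpq}. Every index $(p,q)$ produced this way is pure, i.e. lies in $H$.

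The delicate part — and the main obstacle — is upgrading this variable–by–variable convergence to \emph{joint} convergence of the full multiple series in the topology of $C^\infty(\mathbb D^n)$, since Theorem \ref{expansion} is applied for each fixed value of the remaining variables and must be made uniform. I would argue by direct domination on each polydisc $\{|z_j|\le\rho\}$, $\rho<1$, using two quantitative inputs. First, because $(p,q)\in H$ forces $p_jq_j=0$ for every $j$, each factor $F_{p_j,q_j}$ is a \emph{pure} hypergeometric function $F(p_j-\beta_j,-\alpha_j;p_j+1;\cdot)$ or $F(-\beta_j,q_j-\alpha_j;q_j+1;\cdot)$; by Gauss's summation (valid since $\Re(c-a-b)=1+\Re\alpha_j+\Re\beta_j>0$ by \eqref{conditions}) these, and all their derivatives, are bounded on $[0,\rho^2]$ by a constant times a \emph{polynomial} in $p_j$ (resp. $q_j$). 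Purity is essential here: it avoids the $\binom{p_j+q_j}{p_j}$–type exponential growth that arises when both indices are large. Second, the coefficients $\partial^{(p,q)}u(0)/(p!q!)$ obey a subexponential bound $\le C_\varepsilon(1+\varepsilon)^{|p|+|q|}$ for every $\varepsilon>0$, obtained by recovering them as Fourier coefficients of the continuous slices $u_{\mathbf r}$ divided by the hypergeometric factors and letting the radii tend to $1$ — the careful part being suitable lower bounds on those factors.

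With these in hand, on $\{|z_j|\le\rho\}$ the $(p,q)$–term of \eqref{exp} is dominated by $C_\varepsilon(1+\varepsilon)^{|p|+|q|}\rho^{|p|+|q|}$ times a polynomial in $(p,q)$; choosing $\varepsilon$ so small that $(1+\varepsilon)\rho<1$ lets the geometric factor absorb the polynomial and gives absolute, uniform convergence on the polydisc. Applying $\partial^{(P,Q)}$ termwise introduces only further polynomial factors — from differentiating $z^p\overline z^q$ and from the derivative formula for hypergeometric functions, which keeps each factor pure — so the same domination yields uniform convergence of every differentiated series on every compact subset, hence convergence in $C^\infty(\mathbb D^n)$; the limit equals $u$ by the pointwise identity established in the induction. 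The real work is the bookkeeping behind the two quantitative bounds and their uniformity, and purity of the indices is the structural fact that keeps the hypergeometric factors polynomial and the multiple series summable up to the boundary radius $\rho\to1$.
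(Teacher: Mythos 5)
Your overall skeleton --- induction on $n$, slicing off one variable at a time and applying Theorem \ref{expansion} in that variable --- is sound; indeed it is essentially how the paper proves the rigidity statement Proposition \ref{Reprmhom}, and the algebraic part of your argument (the coefficient functions $a_{p_1,q_1}$ being separately $(\alpha',\beta')$-harmonic because $L_{\alpha_j,\beta_j}$ commutes with $\partial_{z_1},\partial_{\overline z_1}$, the matching of coefficients and of the factors in \eqref{Fpq}) is correct. The genuine gap lies precisely in the part you defer as ``the real work'': the two quantitative bounds that are supposed to give joint convergence in $C^\infty(\mathbb D^n)$. First, Gauss's summation theorem evaluates $F(a,b;c;1)$; it does not bound $\vert F(p_j-\beta_j,-\alpha_j;p_j+1;x)\vert$ on $[0,\rho^2]$, because for complex $\alpha_j,\beta_j$ the Taylor coefficients of these functions are not positive, so there is no monotone majorization by the value at $x=1$; and the claimed polynomial-in-$p_j$ uniformity (also for all derivatives) requires asymptotic control of the Pochhammer ratios $(p_j-\beta_j)_k/(p_j+1)_k$ that you never carry out. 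Second, and more seriously, your coefficient bound divides the Fourier coefficients of $u_{\bf r}$ by the factors $F_{p_j,q_j}(r_j^2)$ with $r_j\to 1$. These factors are complex-valued analytic functions which may vanish; to make the argument work you need radii arbitrarily close to ${\bf 1}$ at which \emph{all} infinitely many factors admit a uniform (or at worst polynomially decaying) positive lower bound. You flag this as ``the careful part'' but supply no argument; it is not bookkeeping, it is the crux of your approach, and without it the subexponential bound on $\partial^{(p,q)}u(0)/(p!q!)$ is unproven.

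The paper's proof shows that both difficulties can be sidestepped entirely. It decomposes $u$ into homogeneous components $u_m(z)=\int_{\mathbb T^n}\overline\zeta^m u(\zeta\cdot z)\,dm_n(\zeta)$, and Proposition \ref{Fouriervector} gives convergence of $\sum_m u_m$ in $C^\infty(\mathbb D^n)$ from smoothness of $u$ alone, by integration by parts (rapid decay in $m$) --- no hypergeometric estimates whatsoever. Rotational invariance (Lemma \ref{alphabetamhom}) shows each $u_m$ is separately $(\alpha,\beta)$-harmonic, Proposition \ref{Reprmhom} identifies it as $c_m\mathcal F_{m^+,m^-}(z)z^{(m)}$, and the constants $c_m$ are pinned down by comparison with the Taylor expansion at the origin as $r\to 0$, see \eqref{rto0}, where $\mathcal F_{p,q}(0)=1$; so no lower bounds near the boundary are ever needed. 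If you wish to rescue your route, you must actually prove your two lemmas (via Pochhammer--Stirling asymptotics, and a zero-avoidance argument exploiting $F(p-\beta,-\alpha;p+1;x)\to(1-x)^{\alpha}$ as $p\to\infty$ together with the discreteness of zeros of each factor); but the cleaner repair is to replace your domination scheme by the Fourier-component decay argument, which delivers the $C^\infty$ convergence for free.
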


We prove this theorem closely following methods used by 
Klintborg and Olofsson (\cite{KO}, $n=1$). We need some preliminary concepts and results.

\begin{definition}\label{Mhomog}
A continuous function $u$ on $\mathbb D^n_0 = \{ z \in \mathbb D^n : \prod_j z_j \not= 0 \}$ 
is said to be $m$ - homogeneous, where $m \in \mathbb Z^n$, if
\begin{equation}\label{mhomog}
(\zeta {\bf \cdot} u)(z)  = u(\zeta \cdot z) = \zeta^m u(z), \qquad z \in \mathbb D^n_0, \quad \zeta \in \mathbb T^n.
\end{equation} 
This means $u(e^{i\theta_1} z_1, \ldots, e^{i\theta_n} z_n) = e^{im_1 \theta_1} \cdot \ldots \cdot e^{im_n\theta_n} u(z)$ 
for all $z \in \mathbb D^n_0$ and $\theta_j \in \mathbb R$.
\end{definition}

Let us note, see \cite{KO}, that for every $(p, q) \in H$ the function $\mathcal F_{p,q}(z)  z^p \overline z^q$ is separately
$(\alpha, \beta)$ - harmonic in $\mathbb D^n$ and $(p-q)$ - homogeneous.

\begin{definition}\label{MFourier}
For  $u \in C(\mathbb D^n)$ and $m \in \mathbb Z^n$ the $m$ - th homogeneous component $u_m$ of $u$ 
is defined by
\begin{equation}\label{mForuier}
u_m(z) = \int_{\mathbb T^n} \overline\zeta^m u(\zeta \cdot z) dm_n(\zeta), \qquad z \in \mathbb D^n.
\end{equation}
\end{definition}

$u_m$ is $m$ - homogeneous by a simple change of variables. If $u \in C^k(\mathbb D^n)$ then, by differentiation under the
integral sign, $u_m \in C^k(\mathbb D^n)$ for $0 \leq k \leq \infty$.


\begin{proposition}\label{Fouriervector}
Let $u_m(z)$ be the $m$ - th homogeneous component of $u \in C^\infty (\mathbb D^n)$ for $m \in \mathbb Z^n$. Then
\begin{equation}\label{fouriervector}
\sum_{m \in \mathbb Z^n} u_m(z) = u(z) \qquad \mbox{in the topology of the space} \quad C^\infty (\mathbb D^n).
\end{equation}
\end{proposition}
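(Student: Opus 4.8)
The plan is to recognize the homogeneous components as Fourier coefficients and reduce the claim to the convergence of a Fourier series, which is then upgraded to the $C^\infty$ topology by a uniform decay estimate. Fix $z \in \mathbb{D}^n$ and write $\zeta = (e^{i\theta_1}, \ldots, e^{i\theta_n})$, so that $g_z(\theta) := u(\zeta \cdot z)$ is a smooth $2\pi$-periodic function of $\theta \in \mathbb{R}^n$. By Definition~\ref{MFourier}, $u_m(z)$ is precisely the $m$-th Fourier coefficient of $g_z$, and since $\mathbf{1}\cdot z = z$ corresponds to $\theta = 0$, the assertion $\sum_m u_m(z) = u(z)$ is the statement that the Fourier series of $g_z$ returns its value at $\theta = 0$. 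The content of Proposition~\ref{Fouriervector} is to promote this pointwise fact to convergence in every seminorm $\|\cdot\|_{p,q,K}$.

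First I would record a commutation relation between $\partial^{(p,q)}$ and the passage to homogeneous components. Differentiating $u(\zeta\cdot z)$ and using $\partial_{z_j}[u(\zeta\cdot z)] = \zeta_j(\partial_{z_j}u)(\zeta\cdot z)$ and $\partial_{\bar z_j}[u(\zeta\cdot z)] = \bar\zeta_j(\partial_{\bar z_j}u)(\zeta\cdot z)$ repeatedly gives $\partial^{(p,q)}[u(\zeta\cdot z)] = \zeta^p\bar\zeta^q(\partial^{(p,q)}u)(\zeta\cdot z)$. Differentiating \eqref{mForuier} under the integral sign, which is legitimate because $u \in C^\infty(\mathbb{D}^n)$ and $\mathbb{T}^n$ is compact, then yields $\partial^{(p,q)}u_m = (\partial^{(p,q)}u)_{m+q-p}$; that is, the $(p,q)$-derivative of the $m$-th component of $u$ is, up to a shift of index, the corresponding component of the smooth function $\partial^{(p,q)}u$. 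Consequently it suffices to bound $\sup_{z\in K}|v_m(z)|$ for an arbitrary $v \in C^\infty(\mathbb{D}^n)$, since every seminorm of $u_m$ is of this form with $v = \partial^{(p,q)}u$.

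The core estimate is a decay bound obtained by integration by parts in the angular variables. Writing $v_m(z) = (2\pi)^{-n}\int_{[-\pi,\pi]^n} e^{-im\cdot\theta}\, v(\zeta\cdot z)\, d\theta$, I note that $\partial_{\theta_j}$ acting on $\theta\mapsto v(\zeta\cdot z)$ equals $(D_j v)(\zeta\cdot z)$ with rotation generator $D_j = i(z_j\partial_{z_j} - \bar z_j\partial_{\bar z_j})$, and that each application of $1 - \partial_{\theta_j}^2$ produces a factor $1 + m_j^2$ after integration by parts. Hence for any $N$ we get $\prod_j(1+m_j^2)^N v_m(z)$ equal to the $m$-th Fourier coefficient of $\theta\mapsto w(\zeta\cdot z)$, where $w = \prod_j(1 - D_j^2)^N v$ is again smooth on $\mathbb{D}^n$ (using that the $D_j$ commute). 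Taking absolute values and using that the torus-orbit $K^* = \{\zeta\cdot z : \zeta\in\mathbb{T}^n,\ z\in K\}$ is a compact subset of $\mathbb{D}^n$, I obtain $\sup_{z\in K}|v_m(z)| \leq C_{N,K}\prod_j(1+m_j^2)^{-N}$, which for $N = 1$ is already summable over $m \in \mathbb{Z}^n$.

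Combining the last two steps gives $\sum_{m\in\mathbb{Z}^n}\|u_m\|_{p,q,K} < \infty$ for every $p,q$ and every compact $K$, so the series $\sum_m u_m$ converges absolutely in the topology of $C^\infty(\mathbb{D}^n)$ to some $v \in C^\infty(\mathbb{D}^n)$, and all derivatives may be summed term by term. It remains to identify $v$ with $u$, which I would do pointwise: for fixed $z$ the coefficients $u_m(z)$ are rapidly decreasing, so $\sum_m u_m(z)e^{im\cdot\theta}$ converges uniformly to a continuous function with the same Fourier coefficients as $g_z$, hence equal to $g_z$; evaluating at $\theta = 0$ gives $v(z) = \sum_m u_m(z) = g_z(0) = u(z)$. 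The main obstacle is the uniform decay estimate of the third step; once the commutation relation of the second step is in hand the derivatives appearing in the $C^\infty$ seminorms cost nothing, since they merely shift the index and replace $u$ by another smooth function to which the same estimate applies.
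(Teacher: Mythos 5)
Your proof is correct and follows essentially the same route as the paper: both obtain rapid, uniform-on-compacts decay of $\sup_K\vert\partial^{(p,q)}u_m\vert$ by differentiating under the integral sign and integrating by parts in the angular variables, then identify the sum pointwise via the Fourier series of $u_z$ evaluated at $\zeta = {\bf 1}$. The only cosmetic differences are your use of $(1-\partial_{\theta_j}^2)^N$, which handles the indices with $m_j = 0$ automatically (the paper instead sets $l_j = 0$ for those $j$), and your commutation relation $\partial^{(p,q)}u_m = (\partial^{(p,q)}u)_{m+q-p}$, which the paper bypasses by estimating $\partial^{(p,q)}u_m$ directly.
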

\begin{proof}
We fix a compact $K = r \overline{\mathbb D}^n$ and $(\gamma, \delta) \in \mathbb Z_+^n \times \mathbb Z^n_+$. 
For a given 
$m \in \mathbb Z^n$ we set $l_j = N$ if $m_j \not= 0$ and $l_j = 0$ otherwise. This gives us $l \in \mathbb Z_+^n$ with
$\vert l \vert \leq Nn$. We can differentiate under the integral sign and perform partial integration to obtain:

\begin{align*}
\partial_z^{(\gamma, \delta)} u_m(z) & = \int_{-\pi}^\pi \cdots \int_{-\pi}^\pi 
\partial_z^{(\gamma, \delta)} u(e^{i\theta_1}z_1, \ldots, e^{i\theta_n}z_n) \prod_{j=1}^n
e^{-i m_j\theta_j} \frac{d\theta_1 \ldots d\theta_n}{ (2\pi)^n} \\
& = \frac{(2\pi)^{-n}}{(im_1)^{l_1} \cdot \ldots \cdot (im_n)^{l_n}} \int_{-\pi}^\pi \cdots \int_{-\pi}^\pi 
\partial^l_\theta  \partial_z^{(\gamma, \delta)} u  (e^{i\theta_1}z_1, \ldots, e^{i\theta_n}z_n)  \\
& \prod_{j=1}^n e^{-i m_j\theta_j} d\theta_1 \ldots d\theta_n, \qquad z \in \mathbb D^n.
\end{align*}
This implies, for $z \in K$:
\begin{equation}\label{estcomp}
\vert \partial_z^{(\gamma, \delta)} u_m(z) \vert 
\leq C  \max_{\vert (s, t) \vert \leq \vert \gamma \vert + \vert \delta \vert + Nn} 
\| \partial^{(s, t)} u \|_{C(K)} \prod_{j=1}^n \frac{1}{m_j^N}, \qquad N \in \mathbb N,
\end{equation}
in the above product $1/m_j^N$ is interpreted as $1$ if $m_j = 0$. 
Since the multiple series $\sum_{m \in \mathbb Z^n_+} m_1^{-2}\cdot \ldots \cdot m_n^{-2}$ converges, this ensures  convergence in $C^\infty (\mathbb D^n)$ topology of the multiple series in \eqref{fouriervector}. The sum converges, for a fixed $z \in \mathbb D^n$ to $u(z)$. Indeed, $u_m(z)$ is the $m$ - th Fourier coefficient of the function $u_z \in C^\infty (\mathbb T^n)$ and therefore $u_z(\zeta) = \sum_{m \in \mathbb Z^n} u_m(z) \zeta^m$. Taking $\zeta = {\bf 1}$ we obtain \eqref{fouriervector}.
\end{proof}


\begin{lemma}[\cite{KO}, Proposition 4.2 for $n=1$]\label{alphabetamhom}
Let $u \in sh_{\alpha, \beta} (\mathbb D^n)$. Then for every $m \in \mathbb Z^n$ its homogeneous part $u_m$ is also
separately $(\alpha, \beta)$ - harmonic.
\end{lemma}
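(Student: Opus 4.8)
The plan is to apply the operator $L_{\alpha_j, \beta_j}$ directly to the integral defining $u_m$ in \eqref{mForuier}, commute it with the integration over $\mathbb T^n$, and then collapse the integrand to zero using the rotational invariance \eqref{zetamod1} together with the hypothesis $L_{\alpha_j, \beta_j} u = 0$.

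First I would record the regularity needed to differentiate under the integral sign. Since $u \in sh_{\alpha, \beta}(\mathbb D^n)$, Lemma \ref{cinfty} gives $u \in C^\infty(\mathbb D^n)$, and the remark following Definition \ref{MFourier} then gives $u_m \in C^\infty(\mathbb D^n)$ as well. Because the integrand $\overline\zeta^m u(\zeta \cdot z)$ depends smoothly on $z$, the integration in \eqref{mForuier} is over the compact group $\mathbb T^n$ with finite Haar measure, and all relevant partial derivatives are continuous, the second-order operator $L_{\alpha_j, \beta_j}$ (which acts only in the $z_j$ variable) may be moved inside the integral.

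Fixing $1 \leq j \leq n$, this yields
$$
L_{\alpha_j, \beta_j} u_m(z) = \int_{\mathbb T^n} \overline\zeta^m \, L_{\alpha_j, \beta_j}\bigl[(\zeta {\bf \cdot} u)\bigr](z) \, dm_n(\zeta),
$$
where for each fixed $\zeta$ the operator acts on the function $z \mapsto u(\zeta \cdot z) = (\zeta {\bf \cdot} u)(z)$. The crucial step is then the rotational invariance \eqref{zetamod1}, which gives
$$
L_{\alpha_j, \beta_j}\bigl[(\zeta {\bf \cdot} u)\bigr](z) = \bigl(\zeta {\bf \cdot} (L_{\alpha_j, \beta_j} u)\bigr)(z) = (L_{\alpha_j, \beta_j} u)(\zeta \cdot z).
$$
Since $u$ is separately $(\alpha, \beta)$ - harmonic we have $L_{\alpha_j, \beta_j} u \equiv 0$ on $\mathbb D^n$, so the integrand vanishes identically and hence $L_{\alpha_j, \beta_j} u_m \equiv 0$. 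As $j$ was arbitrary, this is exactly the statement that $u_m \in sh_{\alpha, \beta}(\mathbb D^n)$.

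There is no genuine obstacle here: the argument is a short computation whose entire weight rests on the already-established rotational invariance \eqref{zetamod1}. The only point demanding a line of justification is the interchange of $L_{\alpha_j, \beta_j}$ with the integral, and this is routine given the $C^\infty$ smoothness of $u$ and the compactness of $\mathbb T^n$.
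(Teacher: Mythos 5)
Your proof is correct and follows exactly the paper's own argument: pass $L_{\alpha_j, \beta_j}$ under the integral in \eqref{mForuier} and use the rotational invariance \eqref{zetamod1} to see the integrand vanishes. You additionally spell out the smoothness justification for differentiating under the integral sign, which the paper leaves implicit.
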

\begin{proof}
Using \eqref{zetamod1} we obtain, for every $1 \leq j \leq n$:
\begin{align*}
L_{\alpha_j, \beta_j} u_m(z)  = & L_{\alpha_j, \beta_j} \int_{\mathbb T^n} \overline\zeta^m (\zeta \cdot u)(z) dm_n(\zeta)  = 
\int_{\mathbb T^n} \overline\zeta^m  L_{\alpha_j, \beta_j} (\zeta \cdot u)(z) dm_n(\zeta) \\
= & 0.
\end{align*}
\end{proof}

\begin{proposition}[\cite{KO}, Theorem 2.4 for $n=1$]\label{Reprmhom}
Assume that $u \in sh_{\alpha, \beta} (\mathbb D^n)$ is $m$ - homogeneous for some 
$m \in \mathbb Z^n$. Then, for some constant $c$, we have
\begin{equation}\label{reprmhom}
u(z) = c \mathcal F_{m^+, m^-} (z) z^{(m)} =
c \prod_{j=1}^n F_{m_j^+, m_j^-} (\vert z_j \vert^2) z_j^{(m_j)}, \qquad z \in \mathbb D^n.
\end{equation}
Conversely, every function $u(z)$ of the above form is $m$ - homogeneous and separately $(\alpha, \beta)$ - harmonic.
\end{proposition}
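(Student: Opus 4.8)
The converse direction is immediate: it was already noted, following \cite{KO}, that for every $(p,q) \in H$ the function $\mathcal{F}_{p,q}(z) z^p \overline{z}^q$ is separately $(\alpha,\beta)$-harmonic and $(p-q)$-homogeneous. Applying this with $(p,q) = (m^+, m^-)$, so that $p - q = m$ and $z^p \overline{z}^q = z^{(m)}$, shows that any $u$ of the stated form is $m$-homogeneous and separately $(\alpha,\beta)$-harmonic. The substance is therefore in the forward direction, which I would establish by induction on $n$, reducing at each step to the one-variable statement \cite{KO}, Theorem 2.4.

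The base case $n = 1$ is precisely the cited result, so there is nothing to prove. For the inductive step I assume the claim in dimension $n - 1$ and let $u \in sh_{\alpha, \beta}(\mathbb{D}^n)$ be $m$-homogeneous. Writing $z = (z', z_n)$ with $z' = (z_1, \ldots, z_{n-1})$ as in Section \ref{Intro}, I first freeze $z' \in \mathbb{D}^{n-1}_0$ and consider the slice $w \mapsto u(z', w)$. It is $(\alpha_n, \beta_n)$-harmonic on $\mathbb{D}$, and taking $\zeta = (1, \ldots, 1, e^{i\theta})$ in \eqref{mhomog} shows it is $m_n$-homogeneous on $\mathbb{D} \setminus \{0\}$. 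The one-variable theorem then supplies a scalar $c_n(z')$ with
$$
u(z', w) = c_n(z')\, F_{m_n^+, m_n^-}(|w|^2)\, w^{(m_n)}, \qquad w \in \mathbb{D},
$$
and this $c_n(z')$ is uniquely determined since the right-hand factor is not identically zero in $w$.

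To iterate I must recognize $c_n$ as a separately harmonic, homogeneous function of $z'$. Since $F_{m_n^+, m_n^-}(0) = 1$, I can fix a real $a \in (0,1)$ small enough that $g := F_{m_n^+, m_n^-}(a^2)\, a^{|m_n|} \neq 0$, and then $c_n(z') = u(z', a)/g$. As $u(\cdot, a)$ is separately $(\alpha_j, \beta_j)$-harmonic in $z'$ for $1 \le j \le n-1$ and is $m'$-homogeneous with $m' = (m_1, \ldots, m_{n-1})$ (by \eqref{mhomog} with $\zeta_n = 1$), the function $c_n$ lies in $sh_{\alpha', \beta'}(\mathbb{D}^{n-1})$ and is $m'$-homogeneous, where $\alpha' = (\alpha_1, \ldots, \alpha_{n-1})$ and $\beta' = (\beta_1, \ldots, \beta_{n-1})$. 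The inductive hypothesis now yields a constant $c$ with $c_n(z') = c \prod_{j=1}^{n-1} F_{m_j^+, m_j^-}(|z_j|^2)\, z_j^{(m_j)}$, and substituting this into the displayed identity gives the asserted formula on $\mathbb{D}^{n-1}_0 \times \mathbb{D}$. Both sides are continuous, in fact $C^\infty$ by Lemma \ref{cinfty}, on all of $\mathbb{D}^n$, and $\mathbb{D}^{n-1}_0 \times \mathbb{D}$ is dense, so the identity propagates to $\mathbb{D}^n$.

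The one genuinely delicate point is the bookkeeping around the coordinate hyperplanes $z_j = 0$: since $m$-homogeneity is only asserted on $\mathbb{D}^n_0$, the slices must be taken with $z' \in \mathbb{D}^{n-1}_0$ and the final equality recovered on all of $\mathbb{D}^n$ by continuity, rather than proved pointwise everywhere from the outset. The small device of evaluating at a single nonzero $w = a$, so as to exhibit $c_n(z')$ as a genuine separately harmonic and homogeneous function of $z'$ rather than a mere family of pointwise constants, is what lets the induction proceed cleanly, and is the step I would phrase with most care.
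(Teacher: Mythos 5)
Your proof is correct and follows essentially the same route as the paper's: induction on $n$, slicing in the last variable to invoke the one-dimensional result of \cite{KO}, and then applying the inductive hypothesis to the coefficient function of $z'$. You are in fact more careful than the paper at the two points it glosses over --- exhibiting the coefficient $c_n(z') = u(z',a)/g$ as a genuine element of $sh_{\alpha',\beta'}(\mathbb{D}^{n-1})$ that is $m'$-homogeneous (the paper merely says ``clearly''), and accounting for the fact that homogeneity is only posited on $\mathbb{D}^n_0$, so the final identity must be extended to all of $\mathbb{D}^n$ by continuity.
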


\begin{proof}
We proceed by induction on $n$. The case $n=1$ is Theorem 2.4 from \cite{KO}. Now let $n > 1$ and assume
that proposition holds for $n-1$.
Let us fix $z' \in \mathbb D^{n-1}$.
The function $U_{z'} (z_n) = u(z', z_n)$ 
is  $(\alpha_n, \beta_n)$ - harmonic. 
Since the theorem holds for $n=1$ we have $u(z) = v(z') F_{m_n^+, m_n^-} (\vert z_n \vert^2) z_n^{(m_n)}$. Clearly $v$ is
$(\alpha', \beta')$ - harmonic in $\mathbb D^{n-1}$ and $m' = (m_1, \ldots, m_{n-1})$ - homogeneous. Therefore inductive
hypothesis provides us representation \eqref{reprmhom}. The converse follows from the fact that each of the functions 
$F_{m_j^+, m_j^-} (\vert z_j \vert^2) z_j^{(m_j)}$ is $(\alpha_j, \beta_j)$ - harmonic in $\mathbb D$.
\end{proof}

{\em Proof of Theorem \ref{expn}.}

The function $u$ is in $C^\infty (\mathbb D^n)$ by Lemma \ref{cinfty}, hence Proposition \ref{Fouriervector} gives us expansion $u(z) = \sum_{m \in \mathbb Z^n} u_m(z)$  of $u$ into its $m$ - th
homogeneous components $u_m$, where convergence is in the topology of $C^\infty (\mathbb D^n)$. By 
Lemma \ref{alphabetamhom} each $u_m$ is separately $(\alpha, \beta)$ - harmonic. Next, Proposition \ref{Reprmhom} gives
representation of each $u_m$ in the form given in \eqref{reprmhom}. Therefore we have
\begin{equation}\label{orth}
u(z)  = \sum_{m \in \mathbb Z^n} c_m  \prod_{j=1}^n  F_{m_j^+, m_j^-} (\vert z_j \vert^2) {z_j}^{(m_j)}
 =  \sum_{(t,s) \in H}  c_{t, s} \mathcal F_{t, s} (z) z^t \overline z^s, \quad z \in \mathbb D^n. 
\end{equation}
It remains to show that $c_{p, q} = \frac{\partial^{(p, q)}}{p! q!} u (0)$ for each pure multiindex 
$(p, q) $ in $\mathbb Z^n_+ \times \mathbb Z^n_+$.  Fix $(p, q)$ in $H$, set $k = \vert p \vert + \vert q \vert$ and
consider the Taylor expansion of $u$ at zero:
\begin{equation}\label{taylor0}
u(z) = \sum_{j=0}^k   \sum_{\vert t \vert + \vert s \vert = j} \frac{\partial^{(t, s)}u(0)}{t! s!} z^t \overline z^s +
O(\| z \|^{k+1}).
\end{equation}
Using \eqref{taylor0} and orthogonality relations for the standard basis for $L^2(\mathbb T^n)$ we obtain
\begin{equation}\label{rto0}
I(r) = \int_{\mathbb T^n} u_r(\zeta) \overline\zeta^p \zeta^q dm_n(\zeta) = \frac{\partial^{(p, q)}u(0)}{p! q!}r^k
+ O(r^{k+1}).
\end{equation}
On the other hand, $F(a, b; c; 0) = 1$ and hence  $\mathcal F_{t, s} (0) = 1$ for all $(s, t)$ in $H$. 
Setting $z = r\zeta$ in \eqref{orth} and using the same orthogonality relations we obtain
$$I(r) = c_{p,q}r^k\mathcal F_{p,q} (r, r \ldots, r) .$$ 
This combined with \eqref{rto0} gives desired formula for $c_{p,q}$. $\Box$

\section{$H^p$ theory for separately $(\alpha, \beta)$ - harmonic functions}\label{HpSec}

\begin{definition}\label{shinfty}
We define 
$sh_{\alpha, \beta}^\infty (\mathbb D^n) = sh_{\alpha, \beta}(\mathbb D^n) \cap L^\infty (\mathbb D^n)$ and  
$$Csh_{\alpha, \beta}(\mathbb D^n) = \{ u \in C(\overline{\mathbb D^n}) : u \vert_{\mathbb D^n} \in sh_{\alpha, \beta}
(\mathbb D^n) \}.
$$
\end{definition}
Clearly $Csh_{\alpha, \beta}(\mathbb D^n)  \subset sh_{\alpha, \beta}^\infty (\mathbb D^n)$ and 
both spaces are Banach spaces with respect to the supremum norm. Using conditions $\Re \alpha_j + \Re \beta_j > -1$ and
Proposition \ref{Reprmhom} we deduce that for each $(p, q)$ in $H$ the function
\begin{equation}\label{phipq}
\varphi_{p,q}(z) = \mathcal F_{p,q} (z) z^p \overline z^q, \qquad z \in \overline{\mathbb D}^n
\end{equation}
belongs to the space  $Csh_{\alpha, \beta}(\mathbb D^n)$.
 

\begin{theorem}\label{Hinfty}
If $\varphi \in L^\infty(\mathbb T^n)$, then $P_{\alpha,\beta}[\varphi] \in sh_{\alpha, \beta}^\infty (\mathbb D^n) $. Moreover,
\begin{equation}\label{supest}
\| P_{\alpha, \beta}[\varphi] \|_{L^\infty (\mathbb D^n)} \leq K(\alpha, \beta) \| \varphi \|_{L^\infty (\mathbb T^n)}.
\end{equation}
In particular, $P_{\alpha, \beta}$ is a bounded linear
operator from $L^\infty(\mathbb T^n)$ to $sh_{\alpha, \beta}^\infty  (\mathbb D^n)$.
\end{theorem}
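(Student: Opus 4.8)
The plan is to read off both assertions from results already in hand, since nothing new is required beyond the uniform $L^1$-bound \eqref{L1p} on the kernel and the harmonicity recorded in Proposition \ref{shPofmeasures}. First I would note that $\mathbb T^n$ carries finite Haar measure, $m_n(\mathbb T^n)=1$, so that $L^\infty(\mathbb T^n)\subset L^1(\mathbb T^n)$ and the Poisson integral $P_{\alpha,\beta}[\varphi]$ is well defined by \eqref{poissonL1} for every $\varphi\in L^\infty(\mathbb T^n)$. Writing $d\mu=\varphi\,dm_n$, which is a complex Borel measure on $\mathbb T^n$, Proposition \ref{shPofmeasures} applied to $\mu$ immediately yields that $P_{\alpha,\beta}[\varphi]=P_{\alpha,\beta}[d\mu]$ is separately $(\alpha,\beta)$-harmonic on $\mathbb D^n$, i.e. lies in $sh_{\alpha,\beta}(\mathbb D^n)$.

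For the norm estimate \eqref{supest} I would fix $z\in\mathbb D^n$ and bound the integral pointwise inside the modulus, pulling out the $L^\infty$-norm of $\varphi$ and invoking \eqref{L1p}:
\[
\vert P_{\alpha,\beta}[\varphi](z)\vert
\leq \int_{\mathbb T^n} \vert P_{\alpha,\beta}(z,\zeta)\vert\,\vert \varphi(\zeta)\vert\,dm_n(\zeta)
\leq \| \varphi \|_{L^\infty(\mathbb T^n)} \int_{\mathbb T^n} \vert P_{\alpha,\beta}(z,\zeta)\vert\,dm_n(\zeta)
\leq K(\alpha,\beta)\,\| \varphi \|_{L^\infty(\mathbb T^n)}.
\]
Since the right-hand side does not depend on $z$, taking the supremum over $z\in\mathbb D^n$ gives \eqref{supest}, and in particular shows $P_{\alpha,\beta}[\varphi]\in L^\infty(\mathbb D^n)$. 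Combined with the harmonicity from the first step, this places $P_{\alpha,\beta}[\varphi]$ in $sh_{\alpha,\beta}^\infty(\mathbb D^n)=sh_{\alpha,\beta}(\mathbb D^n)\cap L^\infty(\mathbb D^n)$.

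Finally, linearity of the map $\varphi\mapsto P_{\alpha,\beta}[\varphi]$ is immediate from linearity of the integral in \eqref{poissonL1}, and \eqref{supest} is precisely the assertion that this operator is bounded from $L^\infty(\mathbb T^n)$ into $sh_{\alpha,\beta}^\infty(\mathbb D^n)$ with operator norm at most $K(\alpha,\beta)$. There is no genuine obstacle here; the only point worth emphasizing is the \emph{uniform-in-$z$} character of \eqref{L1p}, which is exactly what converts a pointwise bound into an $L^\infty$ bound. That uniformity, in turn, rests on the one-dimensional $L^1$-estimate \eqref{L1est} of Theorem \ref{Uab} together with the product structure \eqref{prodpoisson} of the kernel and Fubini's theorem, all of which are already established above.
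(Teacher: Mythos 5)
Your proof is correct and follows essentially the same route as the paper's: harmonicity via Proposition \ref{shPofmeasures} applied to $d\mu = \varphi\, dm_n$, and the sup bound \eqref{supest} by pulling $\|\varphi\|_{L^\infty(\mathbb T^n)}$ out of \eqref{poissonL1} and invoking the uniform kernel estimate \eqref{L1p}. The paper states this in two lines; you have merely written out the same argument in full detail.
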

\begin{proof}
By Proposition \ref{shPofmeasures} the function $P_{\alpha, \beta}[\varphi]$ is in $sh_{\alpha, \beta}(\mathbb D^n)$. The
estimate \eqref{supest} follows from \eqref{L1p} and \eqref{poissonL1}.
\end{proof} 	

\begin{lemma}\label{slices}
Assume $u \in Csh_{\alpha, \beta}(\mathbb D^n)$, $1 \leq k < n$ and $\zeta_j \in \mathbb T$, $k < j \leq n$.
Set $\alpha' = (\alpha_1, \ldots, \alpha_k)$, 
$\beta' = (\beta_1, \ldots, \beta_k)$. Then the function
$$u_{\zeta_{k+1}, \ldots, \zeta_n}(z_1, \ldots, z_k) = u(z_1, \ldots, z_k, \zeta_{k+1}, \ldots, \zeta_n)$$ 
belongs to the space $Csh_{\alpha', \beta'} (\mathbb D^k)$.
\end{lemma}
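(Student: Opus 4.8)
The plan is to establish the two defining properties of $Csh_{\alpha',\beta'}(\mathbb D^k)$ separately. Writing $\zeta' = (\zeta_{k+1},\dots,\zeta_n) \in \mathbb T^{n-k}$ and $v(z') = u(z',\zeta')$ for the slice, continuity is immediate: $v$ is the composition of $u \in C(\overline{\mathbb D^n})$ with the continuous embedding $\overline{\mathbb D^k} \ni z' \mapsto (z',\zeta') \in \overline{\mathbb D^n}$, so $v \in C(\overline{\mathbb D^k})$. The real content is to show that $v|_{\mathbb D^k}$ is separately $(\alpha',\beta')$-harmonic, i.e. $L_{\alpha_i,\beta_i} v = 0$ for each $1 \le i \le k$. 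Fixing such an $i$ and arbitrary values of the remaining variables $z_j \in \mathbb D$ (for $j \le k$, $j \ne i$), it suffices to prove that the one-variable function $z_i \mapsto v$ is $(\alpha_i,\beta_i)$-harmonic on $\mathbb D$. The difficulty is that the frozen coordinates $\zeta_j$ with $j > k$ lie on $\mathbb T$, so the point $(z',\zeta')$ sits on the boundary of $\mathbb D^n$, and the hypothesis $L_{\alpha_i,\beta_i} u = 0$, valid only on the open polydisc, cannot be invoked directly on the slice.

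To overcome this I would approximate the boundary slice by interior ones. For $0 \le s < 1$ set $h_s(z_i) = u(z_1,\dots,z_{i-1},z_i,z_{i+1},\dots,z_k,\, s\zeta_{k+1},\dots,s\zeta_n)$. When $s < 1$ the argument lies in $\mathbb D^n$ for every $z_i \in \mathbb D$, so, since $L_{\alpha_i,\beta_i}$ differentiates only in $z_i$, the hypothesis yields that $h_s$ is $(\alpha_i,\beta_i)$-harmonic on $\mathbb D$; moreover $h_s \in C(\overline{\mathbb D})$. By the uniqueness half of Theorem \ref{KODP}, $h_s$ coincides with the Poisson integral of its own boundary values, $h_s = P_{\alpha_i,\beta_i}[h_s|_{\mathbb T}]$ on $\mathbb D$. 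Now let $s \to 1^-$. By uniform continuity of $u$ on the compact set $\overline{\mathbb D^n}$, $h_s \to h$ uniformly on $\overline{\mathbb D}$, where $h(z_i) = v$; in particular $h_s|_{\mathbb T} \to h|_{\mathbb T}$ uniformly on $\mathbb T$. The one-variable case of the bound \eqref{L1p} (equivalently Theorem \ref{Hinfty} with $n=1$) shows that $P_{\alpha_i,\beta_i}$ maps $C(\mathbb T)$ boundedly into $L^\infty(\mathbb D)$, so $P_{\alpha_i,\beta_i}[h_s|_{\mathbb T}] \to P_{\alpha_i,\beta_i}[h|_{\mathbb T}]$ uniformly on $\mathbb D$. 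Comparing the two limits gives $h = P_{\alpha_i,\beta_i}[h|_{\mathbb T}]$ on $\mathbb D$, which is $(\alpha_i,\beta_i)$-harmonic by Proposition \ref{shPofmeasures}. Since $i$ and the frozen interior variables were arbitrary, $v$ is separately $(\alpha',\beta')$-harmonic, and together with the continuity established above this places $v$ in $Csh_{\alpha',\beta'}(\mathbb D^k)$.

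I expect the delicate point to be exactly this passage to the boundary values of the frozen variables: one cannot simply assert that a uniform limit of $(\alpha_i,\beta_i)$-harmonic functions is again $(\alpha_i,\beta_i)$-harmonic without controlling second-order derivatives. The Poisson representation furnished by Theorem \ref{KODP}, combined with the uniform operator bound \eqref{L1p}, is precisely what circumvents this: it transfers uniform convergence of the boundary data into uniform convergence of the harmonic extensions, so the limit is manifestly a Poisson integral and hence separately harmonic. An alternative route through interior elliptic estimates for the sum operator $L' = \sum_{i=1}^k L_{\alpha_i,\beta_i}$ would recover smoothness of $v$ but only yields $L'v = 0$ rather than the individual equations $L_{\alpha_i,\beta_i} v = 0$; handling the variables one at a time as above is therefore cleaner and keeps the whole argument within the one-dimensional theory already cited.
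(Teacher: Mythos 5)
Your approximation scheme --- pulling the frozen boundary variables inside along $s\zeta'$, representing each approximant $h_s$ by the one--dimensional Poisson integral via the uniqueness part of Theorem \ref{KODP}, and passing to the limit with the uniform bound \eqref{L1p} --- is correct as far as it goes, and it is genuinely different from the paper's argument: the paper perturbs the frozen variables once, obtains $L_{\alpha_j,\beta_j}u_{\zeta_{k+1},\ldots,\zeta_n}=0$ in the distributional sense from uniform convergence, and then invokes interior elliptic regularity for the sum operator $L=L_{\alpha_1,\beta_1}+\cdots+L_{\alpha_k,\beta_k}$. But your proof stops one step short of the statement. What you have established is that, for each fixed $i\le k$ and each fixed choice of the other interior variables, the one--variable function $z_i\mapsto v$ is a Poisson integral, hence smooth in $z_i$ and annihilated by $L_{\alpha_i,\beta_i}$ slice by slice. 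Membership in $sh_{\alpha',\beta'}(\mathbb D^k)$, however, requires by Definition \ref{defalphabeta} that $v\in C^2(\mathbb D^k)$ \emph{jointly}, and joint regularity does not follow from smoothness of every one--variable slice: separate smoothness of a jointly continuous function does not imply joint differentiability. This is exactly the point the paper's elliptic--regularity step exists to handle, so omitting it is a genuine gap.

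The gap is fixable within your framework, in two ways. Either iterate your one--variable representation: your argument applies verbatim when more than $n-k$ variables are frozen on the torus, so by Fubini (legitimate since $v$ is bounded and the kernels are bounded for $z$ in compact subsets of $\mathbb D^k$) you obtain $v=P_{\alpha',\beta'}[v\vert_{\mathbb T^k}]$ on $\mathbb D^k$, and then Proposition \ref{shPofmeasures} together with Lemma \ref{cinfty} gives joint smoothness and all $k$ equations at once. Or convert your slice--wise identities into weak ones: integrating $v$ against $L_{\alpha_i,\beta_i}^{*}\phi$ for a test function $\phi\in C_c^\infty(\mathbb D^k)$ and integrating by parts in the $z_i$ variable only (valid because each slice is smooth) gives $L_{\alpha_i,\beta_i}v=0$ in the sense of distributions for each $i$; elliptic regularity for the sum operator then makes $v$ smooth, whereupon each individual weak equation becomes classical. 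This second route also shows that your closing objection to the elliptic--estimate approach --- that it ``only yields $L'v=0$ rather than the individual equations'' --- is a misreading: in the paper, and in the fix just described, the individual equations are obtained first, weakly, and the sum operator is used solely to upgrade regularity, after which the individual equations hold classically.
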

\begin{proof}
Choose a sequence of points $w_l = (w_{k+1}^l, \ldots, w_n^l)$ in $\mathbb D^{n-k}$ which converges to 
$\zeta = (\zeta_{k+1}, \ldots, \zeta_n)$ and set $u_l(z_1, \ldots, z_k) = u(z_1, \ldots, z_k, w_{k+1}^l, \ldots, w_n^l)$. Then
$u_l$ uniformly converges to $u_{\zeta_{k+1}, \ldots, \zeta_n}$ on $\overline{\mathbb D^k}$. Clearly 
$u_{\zeta_{k+1}, \ldots, \zeta_n}$ is continuous on $\overline{\mathbb D^k}$. Since $L_{\alpha_j, \beta_j} u_l = 0$ on
$\mathbb D^k$, the mentioned uniform convergence gives us that $L_{\alpha_j, \beta_j}u_{\zeta_{k+1}, \ldots, \zeta_n} = 0$
in the weak sense, $1 \leq j \leq k$. Then we have $L u_{\zeta_{k+1}, \ldots, \zeta_n} = 0$ in the weak sense as well, where
$L = L_{\alpha_1, \beta_1} + \cdots + L_{\alpha_k, \beta_k}$. Since $L$ is elliptic, elliptic regularity gives us 
$u_{\zeta_{k+1}, \ldots, \zeta_n} \in C^\infty (\mathbb D^k)$ and therefore 
$L_{\alpha_j, \beta_j}u_{\zeta_{k+1}, \ldots, \zeta_n} = 0$ in the classical sense for $1 \leq j \leq k$. This implies
$u_{\zeta_{k+1}, \ldots, \zeta_n} \in Csh_{\alpha', \beta'} (\mathbb D^k)$.
\end{proof}
Of course, any other $n-k$ variables belonging to $\mathbb T$ could have been fixed, with analogous conclusion.

\begin{theorem}\label{nepgran}
If $u \in Csh_{\alpha, \beta}(\mathbb D^n)$, then 
$u (z)= P_{\alpha, \beta}[u\vert_{\mathbb T^n}] (z)$ for all $z \in \mathbb D^n$.
\end{theorem}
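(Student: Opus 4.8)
The plan is to argue by induction on the dimension $n$, peeling off one variable at a time with the help of the one-dimensional Dirichlet result (Theorem \ref{KODP}) and the slice property (Lemma \ref{slices}). The base case $n = 1$ is precisely Theorem \ref{KODP}, since in that case $Csh_{\alpha,\beta}(\mathbb D)$ consists exactly of functions in $C(\overline{\mathbb D})$ that are $(\alpha,\beta)$ - harmonic in $\mathbb D$, which is the hypothesis of that theorem.

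For the inductive step I would assume the assertion in dimension $n-1$ and fix $u \in Csh_{\alpha,\beta}(\mathbb D^n)$. First, fixing interior points $z_2, \ldots, z_n \in \mathbb D$, the single-variable slice $w \mapsto u(w, z_2, \ldots, z_n)$ is continuous on $\overline{\mathbb D}$ as a restriction of $u$, and is $(\alpha_1,\beta_1)$ - harmonic in $\mathbb D$ by Definition \ref{defalphabeta}. Hence Theorem \ref{KODP} yields
\begin{equation*}
u(z_1, z_2, \ldots, z_n) = \int_{\mathbb T} P_{\alpha_1,\beta_1}(z_1, \zeta_1)\, u(\zeta_1, z_2, \ldots, z_n)\, dm_1(\zeta_1), \qquad z \in \mathbb D^n.
\end{equation*}

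Next I would fix $\zeta_1 \in \mathbb T$ and invoke Lemma \ref{slices}, in the permuted form allowed by the remark following it (freezing the first variable rather than the last), to conclude that the slice $(z_2, \ldots, z_n) \mapsto u(\zeta_1, z_2, \ldots, z_n)$ belongs to $Csh_{\alpha'', \beta''}(\mathbb D^{n-1})$, where $\alpha'' = (\alpha_2, \ldots, \alpha_n)$ and $\beta'' = (\beta_2, \ldots, \beta_n)$. The inductive hypothesis applied to this slice then gives
\begin{equation*}
u(\zeta_1, z_2, \ldots, z_n) = \int_{\mathbb T^{n-1}} \prod_{j=2}^n P_{\alpha_j,\beta_j}(z_j, \zeta_j)\, u(\zeta_1, \zeta_2, \ldots, \zeta_n)\, dm_{n-1}(\zeta_2, \ldots, \zeta_n).
\end{equation*}

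Finally I would substitute this into the one-variable formula and interchange the order of integration, which is justified by Fubini's theorem since the full integrand $\prod_{j=1}^n P_{\alpha_j,\beta_j}(z_j,\zeta_j)\, u(\zeta)$ is integrable over $\mathbb T^n$: $u$ is bounded on the compact torus and the product kernel is integrable by \eqref{L1p}. Using the product factorization \eqref{prodpoisson} of the kernel, the iterated integral collapses to
\begin{equation*}
u(z) = \int_{\mathbb T^n} \prod_{j=1}^n P_{\alpha_j,\beta_j}(z_j, \zeta_j)\, u(\zeta)\, dm_n(\zeta) = P_{\alpha,\beta}[u\vert_{\mathbb T^n}](z),
\end{equation*}
completing the induction. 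The one genuine subtlety, and the step I expect to carry the weight of the argument, is the passage to the slice with the boundary value $\zeta_1 \in \mathbb T$ frozen: a priori $u$ is only separately harmonic in the \emph{open} polydisc, so harmonicity of $(z_2,\ldots,z_n) \mapsto u(\zeta_1, z_2, \ldots, z_n)$ in the remaining variables is not immediate and is exactly what Lemma \ref{slices} supplies, through an approximation by interior points together with elliptic regularity. Everything else is routine bookkeeping with Fubini's theorem and the factorization of the Poisson kernel.
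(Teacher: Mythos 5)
Your proposal is correct and follows essentially the same route as the paper's own proof: peel off one variable at a time via Theorem \ref{KODP}, use Lemma \ref{slices} (in permuted form) to keep the boundary-frozen slices in the class $Csh$, and assemble the iterated integral with Fubini's theorem. The only cosmetic difference is that you organize the iteration as a formal induction on $n$ and peel off the first variable rather than the last, which the remark after Lemma \ref{slices} explicitly permits.
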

	
\begin{proof}
Let $z = (z', z_n) \in \mathbb D^n$ set $g_{z'}(\lambda)= u(z', \lambda)$, $\vert \lambda \vert \leq 1$. 
Clearly $g_{z'}$ is continuous on $\overline{\mathbb D}$ and $ (\alpha_n,\beta_n)$ -  harmonic in $ \mathbb D$.
Then, by Theorem \ref{KODP} we get
$$ u(z', z_n)=g_{z'}(z_n) = \int_{\mathbb T} P_{\alpha_n, \beta_n}(z_n, \zeta_n) u(z', \zeta_n) dm_1  (\zeta_n).$$ 
Since, $u_{\zeta_n}(z_1, \ldots, z_{n-1}) = u(z_1, \ldots, z_{n-1}, \zeta_n)$ is
$((\alpha_1, \ldots, \alpha_{n-1}), (\beta_1, \ldots, \beta_{n-1}))$ - harmonic  for every $\zeta_n$ in $\mathbb T$, by Lemma \ref{slices}, we analogously obtain
\begin{align*}
u(z) = & \int_{\mathbb T} P_{\alpha_n, \beta_n}(z_n, \zeta_n) \\
& \left( \int_{\mathbb T} 
P_{\alpha_{n-1}, \beta_{n-1}}(z_{n-1}, \zeta_{n-1}) u(z_1, \ldots, z_{n-2},\zeta_{n-1}, \zeta_n) dm_1(\zeta_{n-1}) \right) dm_1(\zeta_n).
\end{align*}
Clearly we can iterate this process, using Lemma \ref{slices} at each step and obtain $n$-times iterated integral, which is
equal to $ P_{\alpha, \beta}[u\vert_{\mathbb T^n}] (z)$ by Fubini's theorem.
\end{proof}

\begin{theorem}\label{nepprod}
If $\psi \in C({\mathbb T^n})$ then $P_{\alpha, \beta}[\psi]$ has continuous extension $u$ on $\overline{\mathbb D^n}$,
where $u \in Csh_{\alpha, \beta}(\mathbb D^n)$ and $u \vert_{\mathbb T^n} = \psi$.
\end{theorem}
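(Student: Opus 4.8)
The plan is to prove the statement first for trigonometric polynomials, where everything reduces to the already-constructed functions $\varphi_{p,q}$ of \eqref{phipq}, and then to pass to an arbitrary $\psi \in C(\mathbb T^n)$ by density together with the uniform bound \eqref{supest}.

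First I would record the boundary behavior of the building blocks $\varphi_{p,q}(z) = \mathcal F_{p,q}(z)\, z^p \overline z^q$. On $\mathbb T^n$ every $\vert z_j \vert = 1$, so $\mathcal F_{p,q}$ collapses to the constant $C_{p,q} = \prod_{j=1}^n F_{p_j,q_j}(1) = \prod_{j=1}^n F(p_j - \beta_j, q_j - \alpha_j; p_j+q_j+1; 1)$. Each factor converges at $1$ precisely because $\Re\alpha_j + \Re\beta_j > -1$, which is where condition \eqref{conditions} enters, and Gauss's summation formula $F(a,b;c;1) = \Gamma(c)\Gamma(c-a-b)/(\Gamma(c-a)\Gamma(c-b))$ evaluates it to $C_{p,q} = \prod_{j=1}^n \Gamma(p_j+q_j+1)\Gamma(\alpha_j+\beta_j+1)/(\Gamma(q_j+\beta_j+1)\Gamma(p_j+\alpha_j+1))$. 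Since $\alpha_j, \beta_j \notin \mathbb Z^-$ none of these Gamma arguments is a pole, so $C_{p,q}$ is a finite, \emph{nonzero} number. Hence $\varphi_{p,q}\vert_{\mathbb T^n} = C_{p,q}\,\zeta^p \overline\zeta^q$.

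The key step is then the following reduction. Since $\varphi_{p,q} \in Csh_{\alpha,\beta}(\mathbb D^n)$, Theorem \ref{nepgran} gives $\varphi_{p,q} = P_{\alpha,\beta}[\varphi_{p,q}\vert_{\mathbb T^n}] = C_{p,q}\, P_{\alpha,\beta}[\zeta^p \overline\zeta^q]$, whence $P_{\alpha,\beta}[\zeta^p \overline\zeta^q] = C_{p,q}^{-1}\varphi_{p,q}$ lies in $Csh_{\alpha,\beta}(\mathbb D^n)$ and has boundary values $\zeta^p \overline\zeta^q$. By linearity, every trigonometric polynomial $\psi = \sum a_{p,q}\,\zeta^p \overline\zeta^q$ satisfies $P_{\alpha,\beta}[\psi] \in Csh_{\alpha,\beta}(\mathbb D^n)$ with $P_{\alpha,\beta}[\psi]\vert_{\mathbb T^n} = \psi$. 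For general $\psi$, choose trigonometric polynomials $\psi_k \to \psi$ uniformly (Fej\'er, or Stone--Weierstrass). By \eqref{supest}, $\| P_{\alpha,\beta}[\psi_k] - P_{\alpha,\beta}[\psi_l] \|_{L^\infty(\mathbb D^n)} \leq K(\alpha,\beta)\|\psi_k - \psi_l\|_\infty$; as each $P_{\alpha,\beta}[\psi_k]$ is already continuous on $\overline{\mathbb D^n}$, these sup norms coincide with sup norms over $\overline{\mathbb D^n}$, so $\{P_{\alpha,\beta}[\psi_k]\}$ is Cauchy in $C(\overline{\mathbb D^n})$ and converges uniformly to some $u \in C(\overline{\mathbb D^n})$. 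On $\mathbb D^n$ the limit is $P_{\alpha,\beta}[\psi]$ (again by \eqref{supest}), which is separately $(\alpha,\beta)$-harmonic by Proposition \ref{shPofmeasures}, so $u \in Csh_{\alpha,\beta}(\mathbb D^n)$; on $\mathbb T^n$ the limit is $\lim_k \psi_k = \psi$.

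The part requiring care is the first reduction: verifying that $P_{\alpha,\beta}$ sends the monomials $\zeta^p \overline\zeta^q$ back into functions that are continuous up to the boundary with the correct boundary data. This is exactly where the finiteness and nonvanishing of the hypergeometric constant $F_{p_j,q_j}(1)$ under \eqref{conditions}, and the one-variable-driven identity of Theorem \ref{nepgran}, do the work; everything afterward is a routine density argument powered by the uniform operator bound \eqref{supest}.
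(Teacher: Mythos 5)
Your proof is correct, and there is no circularity: both the assertion \eqref{phipq} that $\varphi_{p,q}\in Csh_{\alpha,\beta}(\mathbb D^n)$ and Theorem \ref{nepgran} precede Theorem \ref{nepprod} in the paper, so you may use them. However, your treatment of the dense class differs genuinely from the paper's. The paper takes as its dense set the algebra $A$ of finite sums of products $\psi_1(\zeta_1)\cdots\psi_n(\zeta_n)$ with $\psi_j\in C(\mathbb T)$, dense by Stone--Weierstrass; for such $\psi$ the tensor structure \eqref{prodpoisson} gives $P_{\alpha,\beta}[\psi](z)=\prod_{j=1}^n P_{\alpha_j,\beta_j}[\psi_j](z_j)$, so continuity up to $\overline{\mathbb D^n}$ with boundary values $\psi$ follows at once from the one-dimensional Dirichlet theorem \ref{KODP}, and the general case is then reached, exactly as in your last step, from the uniform bound of Theorem \ref{Hinfty} together with the fact that the subspace $Y$ of functions extending continuously to $\overline{\mathbb D^n}$ is closed in $CB(\mathbb D^n)$. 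You instead take trigonometric polynomials as the dense class and handle the monomials by combining the hypergeometric building blocks $\varphi_{p,q}$ of \eqref{phipq} with Theorem \ref{nepgran} and Gauss's summation theorem; note that the nonvanishing $\mathcal F_{p,q}(\mathbf 1)\neq 0$, which you verify from $\alpha_j,\beta_j\notin\mathbb Z^-$, is genuinely needed in order to divide, and your verification is correct. What each approach buys: the paper's route avoids hypergeometric evaluations and Theorem \ref{nepgran} entirely, resting only on the quoted one-variable result; your route is heavier on the expansion machinery but produces the identity \eqref{phiext}, namely $P_{\alpha,\beta}[\zeta^p\overline\zeta^q](z)=\mathcal F_{p,q}(z)\mathcal F_{p,q}(\mathbf 1)^{-1}z^p\overline z^q$, as an immediate by-product, whereas the paper obtains it only afterwards as a consequence of Proposition \ref{Dirchndim}.
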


\begin{proof}
The set $Y$ of complex valued functions on $\mathbb D^n$ which extend continuously to $\mathbb D^n$ is
a closed vector subspace of the Banach space $X= CB(\mathbb D^n)$ of continuous bounded functions on $\mathbb D^n$, with supremum norm. Let $A$ be the set of all finite sums of the functions of the form
\begin{equation}\label{algebra}
\psi (\zeta) = \psi_1 (\zeta_1) \psi_2 (\zeta_2) \cdots \psi_n(\zeta_n), \qquad \zeta = (\zeta_1, \ldots, \zeta_n) \in
\mathbb T^n,
\end{equation}
where $\psi_j$ is a continuous complex valued function on $\mathbb T$ for $1 \leq j \leq n$. It is easy to verify that $A$ is an
algebra of continuous functions on a compact $\mathbb T^n$ which is closed under conjugation, separates points on
$\mathbb T^n$ and contains constants. Hence, by the Stone - Weierstrass theorem, it is dense in $C(\mathbb T^n)$. By
\eqref{prodpoisson} we have, for $\psi$ as in \eqref{algebra}:
$$P_{\alpha, \beta}[\psi] (z_1, \ldots, z_n) = \prod_{j=1}^n (P_{\alpha_j, \beta_j}[\psi_j])(z_j).$$
Using Theorem \ref{KODP}, this shows that $P_{\alpha, \beta}$ maps $A$ into $Y$ and that restriction of 
$P_{\alpha, \beta}[\psi]$ to $\mathbb T^n$ is equal to $\psi$, for $\psi \in A$.  Moreover, $P_{\alpha, \beta} :
C(\mathbb T^n) \to CB(\mathbb D^n)$ is continuous by Theorem \ref{Hinfty}. Since $A$ is dense in $C(\mathbb T^n)$
and $Y$ is closed in $X$, it follows that $P_{\alpha, \beta}$ maps $C(\mathbb T^n)$ into $Y$, which completes the proof.
\end{proof}

Theorems \ref{nepprod} and \ref{nepgran} imply that $C(\mathbb T^n)$ and $Csh_{\alpha, \beta} (\mathbb D^n)$
are isomorphic as Banach spaces, the operator $P_{\alpha, \beta} :C(\mathbb T^n) \rightarrow Csh_{\alpha, \beta}
(\mathbb D^n)$ provides an isomorphism (not necessarily isometric). 

Another consequence of Theorems \ref{nepprod} and \ref{nepgran} is the following result on the Dirichlet problem for our
system of PDEs:

\begin{proposition}\label{Dirchndim}
For every $\varphi \in C(\mathbb T^n)$ the Dirichlet problem
 \begin{equation*}\label{Dirprposed}
\begin{cases}
L_{\alpha_j, \beta_j} u = 0, & z \in \mathbb D^n, \quad 1 \leq j \leq n  \\
u(z) = \varphi (z), & z \in \mathbb T^n.
\end{cases}
\end{equation*}
has unique solution $u$ in the space $Csh_{\alpha, \beta} (\mathbb D^n)$ and $u(z) = P_{\alpha, \beta}[\varphi](z)$ for
$z \in \mathbb D^n$.
\end{proposition}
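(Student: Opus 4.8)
The plan is to assemble this proposition directly from Theorems \ref{nepprod} and \ref{nepgran}, which together have already discharged all the substantive analysis; the present statement is essentially the packaging of those two facts into a single existence-and-uniqueness assertion. First I would read off what membership in $Csh_{\alpha, \beta}(\mathbb{D}^n)$ means for the Dirichlet problem: it says exactly that the restriction to $\mathbb{D}^n$ lies in $sh_{\alpha, \beta}(\mathbb{D}^n)$, i.e. $L_{\alpha_j, \beta_j} u = 0$ on $\mathbb{D}^n$ for every $1 \leq j \leq n$, together with continuity up to $\overline{\mathbb{D}^n}$. Thus the two clauses of the Dirichlet problem correspond precisely to the defining conditions of that function space plus the prescribed boundary trace.

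For existence, I would simply set $u = P_{\alpha, \beta}[\varphi]$. By Theorem \ref{nepprod}, since $\varphi \in C(\mathbb{T}^n)$, this Poisson integral extends continuously to $\overline{\mathbb{D}^n}$, the extended function lies in $Csh_{\alpha, \beta}(\mathbb{D}^n)$, and its restriction to the distinguished boundary satisfies $u|_{\mathbb{T}^n} = \varphi$. Membership in $Csh_{\alpha, \beta}(\mathbb{D}^n)$ already yields all $n$ of the PDE constraints, so together with the boundary identity this exhibits $u$ as a solution. For uniqueness, I would take an arbitrary solution $u \in Csh_{\alpha, \beta}(\mathbb{D}^n)$ with $u|_{\mathbb{T}^n} = \varphi$ and invoke Theorem \ref{nepgran}, which reproduces every element of $Csh_{\alpha, \beta}(\mathbb{D}^n)$ from its own boundary trace: $u(z) = P_{\alpha, \beta}[u|_{\mathbb{T}^n}](z) = P_{\alpha, \beta}[\varphi](z)$ for all $z \in \mathbb{D}^n$. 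Hence any solution agrees with $P_{\alpha, \beta}[\varphi]$ on $\mathbb{D}^n$, and being continuous on $\overline{\mathbb{D}^n}$ with the prescribed boundary values it is determined everywhere; this simultaneously yields uniqueness and confirms the asserted formula.

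I do not anticipate a genuine obstacle at this stage, precisely because the difficult steps lie upstream: the reproducing property in Theorem \ref{nepgran} rests on the one-dimensional Dirichlet result (Theorem \ref{KODP}), the slice Lemma \ref{slices}, and an iterated-integral plus Fubini argument, while the continuity up to the boundary in Theorem \ref{nepprod} comes from a Stone--Weierstrass density argument on product functions combined with the uniform operator bound of Theorem \ref{Hinfty}. The only matter requiring a moment's care is to verify that the bookkeeping lines up correctly, so that Theorem \ref{nepprod} supplies existence and Theorem \ref{nepgran} supplies uniqueness with no gap between the hypotheses of the proposition and the conclusions of the two theorems.
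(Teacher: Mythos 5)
Your proof is correct and is exactly the paper's argument: the paper states this proposition as an immediate consequence of Theorems \ref{nepprod} (existence via $u = P_{\alpha,\beta}[\varphi]$) and \ref{nepgran} (uniqueness via the reproducing formula $u = P_{\alpha,\beta}[u\vert_{\mathbb T^n}]$), just as you do.
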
 

We can apply Proposition \ref{Dirchndim} to the function $\varphi_{p, q} (z) = \mathcal F_{p,q} (z)  z^p \overline z^q$, which
is in the space $Csh_{\alpha, \beta} (\mathbb D^n)$. Since 
$\varphi_{p,q} (\zeta) = \mathcal F_{p,q}({\bf 1}) \zeta^p \overline \zeta^q$ for $\zeta \in \mathbb T^n$ we get
\begin{equation}\label{phiext}
P_{\alpha, \beta}[\zeta^p \overline \zeta^q] (z) = \frac{\mathcal F_{p,q} (z)}{\mathcal F_{p,q}({\bf 1})} z^p \overline z^q,
\qquad z \in \mathbb D^n.
\end{equation}
We combine the above formula with Theorem \ref{Hinfty} and obvious equality $\| \zeta^p \overline \zeta^q \|_{L^\infty
(\mathbb T^n)} = 1$ to deduce the following result which is, in the particular case $n = 1$, an estimate of a
family of hypergeometric functions.

\begin{proposition}\label{Unesthyp}
For every $(p, q) \in H$ we have the following estimate:
\begin{equation}\label{unesthyp}
\left\vert  \frac{\mathcal F_{p,q} (z)}{\mathcal F_{p,q}({\bf 1})} z^p \overline z^q \right\vert \leq K(\alpha, \beta), 
\qquad z \in \overline{\mathbb D}^n.
\end{equation}
\end{proposition}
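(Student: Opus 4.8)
The plan is to reduce the claimed inequality to the operator bound of Theorem \ref{Hinfty} via the explicit identity \eqref{phiext}, and then to propagate the estimate from $\mathbb{D}^n$ to its closure by continuity.

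First I would record the boundary values of $\varphi_{p,q}(z) = \mathcal{F}_{p,q}(z)\, z^p \overline z^q$. On the distinguished boundary $\mathbb{T}^n$ each factor $F_{p_j,q_j}(|z_j|^2)$ equals $F_{p_j,q_j}(1)$, so $\mathcal{F}_{p,q}(\zeta) = \mathcal{F}_{p,q}(\mathbf{1})$ and hence $\varphi_{p,q}(\zeta) = \mathcal{F}_{p,q}(\mathbf{1})\,\zeta^p \overline\zeta^q$ for $\zeta \in \mathbb{T}^n$. Because $\varphi_{p,q} \in Csh_{\alpha,\beta}(\mathbb{D}^n)$, Proposition \ref{Dirchndim} identifies it as the Poisson integral of its own boundary data, which is exactly the content of \eqref{phiext}: the left-hand side of \eqref{unesthyp} equals $P_{\alpha,\beta}[\zeta^p \overline\zeta^q](z)$ for every $z \in \mathbb{D}^n$. (Here $\mathcal{F}_{p,q}(\mathbf{1}) \neq 0$, which is implicit in \eqref{phiext} and guarantees the quotient is well defined.)

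Next I would apply Theorem \ref{Hinfty} to the boundary function $\varphi(\zeta) = \zeta^p \overline\zeta^q \in L^\infty(\mathbb{T}^n)$. Since $|\zeta^p \overline\zeta^q| = 1$ on $\mathbb{T}^n$, one has $\|\varphi\|_{L^\infty(\mathbb{T}^n)} = 1$, so the norm estimate \eqref{supest} yields $\|P_{\alpha,\beta}[\varphi]\|_{L^\infty(\mathbb{D}^n)} \leq K(\alpha,\beta)$. Combining this with the identification from the previous step gives \eqref{unesthyp} for all $z \in \mathbb{D}^n$. To reach the full closed polydisc $\overline{\mathbb{D}}^n$ I would then invoke continuity: as $\varphi_{p,q} \in Csh_{\alpha,\beta}(\mathbb{D}^n)$ it extends continuously to $\overline{\mathbb{D}}^n$, so the left side of \eqref{unesthyp} is a continuous function there; since $\mathbb{D}^n$ is dense in $\overline{\mathbb{D}}^n$, the bound $K(\alpha,\beta)$ valid on the open polydisc persists on the closure.

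As for difficulty, there is essentially none beyond bookkeeping: the statement packages facts already established earlier in the paper. The one point deserving care is the boundary passage. The identity \eqref{phiext} holds a priori only in the open polydisc, and $\overline{\mathbb{D}}^n$ must not be confused with its distinguished boundary $\mathbb{T}^n$ — the closure also contains mixed points where some coordinates satisfy $|z_j| = 1$ while others satisfy $|z_j| < 1$. For such points the bound follows not from \eqref{phiext} directly but only from the continuity-and-density argument, which is why that final step is the substantive one. (As a byproduct, evaluating on $\mathbb{T}^n$ shows $1 \leq K(\alpha,\beta)$, consistent with the estimate.)
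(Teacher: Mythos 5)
Your proof is correct and follows essentially the same route as the paper: identify $\varphi_{p,q}$ via its boundary values and Proposition \ref{Dirchndim} to obtain \eqref{phiext}, then apply the $L^\infty$ bound of Theorem \ref{Hinfty} to $\zeta^p\overline\zeta^q$. The only difference is that you spell out the continuity-and-density step passing from $\mathbb{D}^n$ to $\overline{\mathbb{D}}^n$ (and note $\mathcal{F}_{p,q}(\mathbf{1})\neq 0$), which the paper leaves implicit.
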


We have already noted that $u(rz)$ is not necessarily separately $(\alpha, \beta)$ - harmonic if $u$ is separately 
$(\alpha, \beta)$ - harmonic, see \eqref{zetar}. This leads to difficulties
in applying standard arguments on integral representations which involve Banach - Alaouglu theorem. The next theorem
allows us to surmount these difficulties, see proofs of Theorems \ref{Repr1} and \ref{ReprLp}.

\begin{theorem}\label{LimitPoisson}
If $u$ is separately $(\alpha, \beta)-$ harmonic in $\mathbb D^n$, then
\begin{equation}\label{limitPoisson}
\lim_{{\bf r \to 1}} P_{\alpha, \beta}[u_{\bf r}] (z) = u(z), \qquad z \in \mathbb D^n.
\end{equation}
\end{theorem}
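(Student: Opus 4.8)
The plan is to reduce the statement to a single convergent series identity and then pass to the limit $\mathbf r \to \mathbf 1$ term by term. Fix $z \in \mathbb D^n$ and $\mathbf r \in [0,1)^n$. Evaluating the expansion of Theorem \ref{expn} at the point $\mathbf r \cdot \zeta$ with $\zeta \in \mathbb T^n$, and using $\mathcal F_{p,q}(\mathbf r \cdot \zeta) = \mathcal F_{p,q}(\mathbf r)$ together with $(\mathbf r\zeta)^p\,\overline{(\mathbf r\zeta)}^q = \mathbf r^{p+q}\zeta^p\overline\zeta^q$ (the $r_j$ are real and $|\zeta_j|=1$), I obtain
$$u_{\mathbf r}(\zeta) = \sum_{(p,q)\in H} c_{p,q}\, \mathcal F_{p,q}(\mathbf r)\, \mathbf r^{p+q}\, \zeta^p\overline\zeta^q, \qquad c_{p,q} = \frac{\partial^{(p,q)}u(0)}{p!\,q!},$$
with convergence uniform on $\mathbb T^n$, since the $C^\infty(\mathbb D^n)$–convergence in Theorem \ref{expn} restricts to uniform convergence on the compact torus $\{\,|w_j|=r_j\,\}\subset\mathbb D^n$. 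As $P_{\alpha,\beta}(z,\cdot)$ is bounded on $\mathbb T^n$ for fixed $z$, I may integrate term by term and invoke \eqref{phiext} to get
$$P_{\alpha,\beta}[u_{\mathbf r}](z) = \sum_{(p,q)\in H} c_{p,q}\, \mathcal F_{p,q}(\mathbf r)\, \mathbf r^{p+q}\, \frac{\mathcal F_{p,q}(z)}{\mathcal F_{p,q}(\mathbf 1)}\, z^p\overline z^q.$$
Each summand tends, as $\mathbf r\to\mathbf 1$, to $c_{p,q}\mathcal F_{p,q}(z)z^p\overline z^q$, because $\mathcal F_{p,q}$ is continuous up to $t=1$ (the hypergeometric series converges absolutely there thanks to $\Re\alpha_j+\Re\beta_j>-1$ in \eqref{conditions}) and $\mathbf r^{p+q}\to 1$; summing these limits reproduces $u(z)$ by Theorem \ref{expn}. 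Everything therefore reduces to interchanging $\lim_{\mathbf r\to\mathbf 1}$ with the infinite sum.

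For the interchange I would use dominated convergence for series, which demands a summable majorant for the general term that is uniform for $\mathbf r\in[\rho_0,1)^n$. The factor $b_{p,q}(z):=\frac{\mathcal F_{p,q}(z)}{\mathcal F_{p,q}(\mathbf 1)}z^p\overline z^q$ is bounded by $K(\alpha,\beta)$ on $\overline{\mathbb D}^n$ by Proposition \ref{Unesthyp}, but mere boundedness will not suffice. I would instead extract genuine decay: since $|z^p\overline z^q|\le \rho^{|p|+|q|}$ with $\rho=\max_j|z_j|<1$, since Gauss's summation gives $\mathcal F_{p,q}(\mathbf 1)=\prod_j \frac{\Gamma(p_j+q_j+1)\Gamma(1+\alpha_j+\beta_j)}{\Gamma(p_j+1+\alpha_j)\Gamma(q_j+1+\beta_j)}$, whence $1/|\mathcal F_{p,q}(\mathbf 1)|$ grows at most polynomially in $|p|+|q|$, and since $|\mathcal F_{p,q}(z)|$ stays bounded for fixed $z$ (each factor $F_{p_j,q_j}(|z_j|^2)$ tends to a finite limit as $p_j+q_j\to\infty$), one arrives at $|b_{p,q}(z)|\le C(z)(1+|p|+|q|)^{D}\rho^{|p|+|q|}$.

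For the remaining factor I would use $|c_{p,q}\mathcal F_{p,q}(\mathbf r)\mathbf r^{p+q}| = |u_m(\mathbf r)| \le \|u_m\|_{L^\infty(\overline{\mathbb D}^n)}$, where $m\leftrightarrow(p,q)$ and $u_m$ is the $m$-th homogeneous component; by Proposition \ref{Unesthyp} this is at most $K(\alpha,\beta)|c_{p,q}|\,|\mathcal F_{p,q}(\mathbf 1)|$. I then compare against the absolutely convergent homogeneous expansion at an intermediate radius: fix $z_0$ with $|z_{0,j}|=\rho_1$ and $\rho<\rho_1<1$, so that $\sum_m|u_m(z_0)|<\infty$ by the rapid-decay estimate \eqref{estcomp}, and write $|c_{p,q}|\,|\mathcal F_{p,q}(\mathbf 1)| = |u_m(z_0)|\,\rho_1^{-|p|-|q|}\,\frac{|\mathcal F_{p,q}(\mathbf 1)|}{|\mathcal F_{p,q}(z_0)|}$. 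The last ratio is polynomially bounded (same Gauss/large-parameter asymptotics), so multiplying by the decay of $b_{p,q}(z)$ produces the $\mathbf r$-independent majorant $C\sum_m|u_m(z_0)|\,(\rho/\rho_1)^{|m|}(1+|m|)^{D'}<\infty$, where $(\rho/\rho_1)^{|m|}<1$ absorbs every polynomial factor. This legitimizes the termwise limit and yields \eqref{limitPoisson}.

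The main obstacle is precisely this last piece of uniform control. The theorem is needed because $u$ need not extend continuously to $\overline{\mathbb D^n}$ and $z\mapsto u(rz)$ is not separately $(\alpha,\beta)$–harmonic (cf. \eqref{zetar}), so one cannot reach the conclusion through uniform convergence of $u_{\mathbf r}$ to boundary data as in the easy $Csh_{\alpha,\beta}$ case. The device that breaks the impasse is Proposition \ref{Unesthyp}, sharpened from a uniform bound into honest geometric-times-polynomial decay of the building blocks $b_{p,q}(z)$ at an interior point, together with the comparison of the Taylor coefficients $c_{p,q}$ against the absolutely convergent homogeneous expansion at a slightly larger radius $\rho_1$. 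The only analytic input beyond the quoted results is the elementary large-parameter behaviour of $F(p_j-\beta_j,q_j-\alpha_j;p_j+q_j+1;t)$ (boundedness for $t\le t_0<1$ and the Gauss value at $t=1$), which I would isolate in a short preliminary lemma.
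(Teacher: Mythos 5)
Up to the key display, your argument \emph{is} the paper's argument: the expansion of $u_{\bf r}$ from Theorem \ref{expn}, the term-by-term integration against $P_{\alpha,\beta}(z,\cdot)$ justified by uniform convergence on the torus, and the use of \eqref{phiext} produce exactly the series the paper works with, and both proofs then reduce to interchanging $\lim_{{\bf r}\to{\bf 1}}$ with the sum. The divergence is in how the ${\bf r}$-independent summable majorant is built, and there your write-up has two genuine holes. First, your coefficient comparison at the intermediate radius divides by $\mathcal F_{p,q}(z_0)=\prod_j F_{p_j,q_j}(\rho_1^2)$, and nothing in your argument rules out that some of these factors vanish (for complex $\alpha_j,\beta_j$ these functions are not positive, and their zeros are not controlled by anything you quote); moreover, the claim that $|\mathcal F_{p,q}({\bf 1})|/|\mathcal F_{p,q}(z_0)|$ is polynomially bounded requires a lower bound on $|\mathcal F_{p,q}(z_0)|$ uniform over all $(p,q)\in H$, which you never establish --- it would need both the large-parameter asymptotics and a choice of $\rho_1$ avoiding the zeros of the remaining finitely many factors. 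Second, the uniform boundedness in $(p,q)$ of $F(p_j-\beta_j,q_j-\alpha_j;p_j+q_j+1;t)$ for fixed $t<1$, on which your decay estimate for $b_{p,q}(z)$ rests, is deferred to an unproven ``preliminary lemma''; it is true, but it is a new analytic ingredient, not a quoted result. (Incidentally, even granting all of this, part of your machinery is redundant: in the product $|u_m({\bf r})|\,|b_{p,q}(z)|$ the quantity $|\mathcal F_{p,q}({\bf 1})|$ cancels, so the Gauss-formula and polynomial-growth discussion does no work.)

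All of this can be bypassed by grouping the very same general term the other way, which is what the paper does. Write
\begin{equation*}
A(p,q,{\bf r},z)=\Bigl(c_{p,q}\,\mathcal F_{p,q}(z)\,z^p\overline z^q\Bigr)\cdot
\frac{\mathcal F_{p,q}({\bf r})}{\mathcal F_{p,q}({\bf 1})}\,{\bf r}^p\,\overline{\bf r}^{\,q}.
\end{equation*}
The second factor is precisely the quantity estimated in Proposition \ref{Unesthyp}, evaluated at the interior point ${\bf r}\in\overline{\mathbb D}^n$ (you already invoked \eqref{unesthyp} at ${\bf r}$, only paired with the wrong partner), hence it is bounded by $K(\alpha,\beta)$ uniformly in ${\bf r}$ and in $(p,q)$. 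The first factor is the $m$-th homogeneous component $u_m$ of $u$ evaluated at the \emph{fixed} point $z$, so \eqref{estcomp} with $(\gamma,\delta)=(0,0)$ and $N=2$ gives $|u_m(z)|\le C_z\prod_j p_j^{-2}q_j^{-2}$ (with $0^{-2}$ read as $1$), an ${\bf r}$-free summable majorant. In short, the decay should be harvested from the $z$-dependent factor, where \eqref{estcomp} supplies it for free, rather than manufactured inside the hypergeometric factor; with this regrouping no Gauss summation, no auxiliary radius $\rho_1$, and no lower bounds on hypergeometric functions are needed, and the proof closes in two lines from results already established in the paper.
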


\begin{proof}
We recall  locally uniform expansion of $u$ from Theorem \ref{expn}
\begin{equation}\label{exp1applied1}
u(z) = \sum_{(p,q) \in H} c_{p, q} \mathcal F_{p,q} (z) z^p \overline z^q, 
\qquad z \in \mathbb D^n, \quad c_{p, q} = \frac{ \partial^{(p, q)}u}{p!q!} (0).
\end{equation}
Therefore we have, using \eqref{phiext}:
\begin{align}\label{limr}
\lim_{{\bf r \to 1}} P_{\alpha, \beta}[u_{\bf r}] (z) &  = \lim_{{\bf r \to 1}} 
 \int_{\mathbb T^n} P_{\alpha, \beta} (z, \zeta) u ( {\bf r} \cdot  \zeta) dm_n(\zeta)  \nonumber    \\
& =  \lim_{{\bf r \to 1}} \int_{\mathbb T^n} P_{\alpha, \beta} (z, \zeta) 
\sum_{(p, q) \in H} c_{p, q} {\bf r}^p {\bf r}^q
\mathcal F_{p, q} ({\bf r})  \zeta^p \overline \zeta^q dm_n(\zeta) \nonumber     \\
 & =   \lim_{{\bf r \to 1}}  \sum_{(p, q) \in H}  c_{p, q} {\bf r}^p {\bf r}^q
\mathcal F_{p, q} ({\bf r})  P_{\alpha, \beta}[\zeta^p \overline \zeta^q](z)  \nonumber    \\
& =   \lim_{{\bf r \to 1}}  \sum_{(p, q) \in H} c_{p, q}  {\bf r}^p {\bf r}^q  
 \frac{\mathcal F_{p,q} ({\bf r})}{\mathcal F_{p,q}({\bf 1})}  \mathcal F_{p,q} (z)  z^p \overline z^q . 
\end{align}
Next we estimate each term $A(p, q, {\bf r}, z)$ in the above series using \eqref{estcomp} and \eqref{unesthyp}:
\begin{align*}
\vert A(p, q, {\bf r}, z) \vert & \leq C_z 
\frac{\mathcal F_{p, q} ({\bf r})}{\mathcal F_{p, q} ({\bf 1})}
{\bf r}^p  {\bf r}^q   \prod_{j=1}^n p_j^{-2} q_j^{-2}   \leq C_z K(\alpha, \beta)  \prod_{j=1}^n p_j^{-2} q_j^{-2},
\end{align*}
where we follow earlier convention of interpreting $0^{-2}$ as $1$. Since the multiple series $\sum_H \prod_{j=1}^n
p_j^{-2} q_j^{-2}$ converges, and the terms do not depend on ${\bf r}$, the interchange of limit and summation in
\eqref{limr} is legitimate. This ends the proof.
\end{proof}

Next we derive a local version of Theorem \ref{nepprod}.

\begin{theorem}\label{localcont}
Assume $\varphi : \mathbb T^n \to \mathbb C$  is  continuous at 
$\zeta^0  = (e^{i\theta^0_1}, \ldots, e^{i\theta^0_n}) $ in $ \mathbb T^n$ and belongs to $L^\infty(\mathbb T^n)$. Then $u = P_{\alpha, \beta} [\varphi]$ has continuous extension to $\mathbb D^n \cup \{ \zeta^0 \}$.
\end{theorem}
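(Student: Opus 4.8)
The plan is to reduce the statement to the single assertion that $u(z) \to \varphi(\zeta^0)$ as $z \to \zeta^0$ inside $\mathbb D^n$ (the function is already smooth on $\mathbb D^n$ by Lemma \ref{cinfty}, so defining $u(\zeta^0) = \varphi(\zeta^0)$ would then give the continuous extension). First I would set $c = \varphi(\zeta^0)$ and write
\[
u(z) = P_{\alpha, \beta}[\varphi - c](z) + c\, P_{\alpha, \beta}[1](z).
\]
The constant function $1$ lies in $C(\mathbb T^n)$, so by Theorem \ref{nepprod} the summand $P_{\alpha,\beta}[1]$ extends continuously to $\overline{\mathbb D^n}$ with boundary value $1$; hence $c\, P_{\alpha, \beta}[1](z) \to c$ as $z \to \zeta^0$. (One should keep in mind that $P_{\alpha,\beta}[1]$ is \emph{not} the constant $1$, since constants need not be separately $(\alpha,\beta)$-harmonic, but this is irrelevant here.) It therefore suffices to prove $P_{\alpha, \beta}[\psi](z) \to 0$ as $z \to \zeta^0$, where $\psi = \varphi - c$ is bounded, continuous at $\zeta^0$, and satisfies $\psi(\zeta^0) = 0$.

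The key one-dimensional ingredient I would isolate is a concentration estimate: for each $j$ and each $\delta > 0$, writing $I_j^c = \{\zeta_j \in \mathbb T : |\zeta_j - \zeta_j^0| \geq \delta\}$,
\[
\int_{I_j^c} |P_{\alpha_j, \beta_j}(z_j, \zeta_j)|\, dm_1(\zeta_j) \longrightarrow 0 \qquad \text{as } z_j \to \zeta_j^0 .
\]
This follows directly from the explicit kernel \eqref{abkernel}. For $\zeta_j \in I_j^c$ the identities $|1 - z_j \overline{\zeta_j}| = |1 - \overline{z_j}\zeta_j|$ and $|1 - \zeta_j^0 \overline{\zeta_j}| = |\zeta_j - \zeta_j^0| \geq \delta$ show that, once $|z_j - \zeta_j^0| < \delta/2$, both factors in the denominator stay bounded away from $0$ and above uniformly in $\zeta_j$ (modulo a bounded factor coming from the imaginary parts of the exponents, since $1 - z_j\overline{\zeta_j}$ lies in the right half-plane). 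The numerator $(1 - |z_j|^2)^{\alpha_j + \beta_j + 1}$ has modulus $(1-|z_j|^2)^{\Re(\alpha_j+\beta_j)+1}$, which tends to $0$ because $\Re(\alpha_j + \beta_j) + 1 > 0$ by \eqref{conditions}. Thus the integrand tends to $0$ uniformly in $\zeta_j \in I_j^c$, and integrating over the finite-measure set gives the claim.

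To assemble the estimate I would use a product neighborhood. Given $\epsilon > 0$, pick $\delta > 0$ so small that $|\psi| < \epsilon$ on $N_\delta = \prod_{j=1}^n I_j$ with $I_j = \{|\zeta_j - \zeta_j^0| < \delta\}$; this is possible since product arcs form a neighborhood base at $\zeta^0$ and $\psi$ is continuous there with $\psi(\zeta^0) = 0$. The main obstacle is that the complement of $N_\delta$ is not itself a product, so one cannot bound the kernel integral over it directly; I resolve this by decomposing $\mathbb T^n$ as the disjoint union, over subsets $S \subseteq \{1, \ldots, n\}$, of $E_S = \prod_{j \in S} I_j^c \times \prod_{j \notin S} I_j$. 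On $E_\emptyset = N_\delta$ the contribution is at most $\epsilon \int_{\mathbb T^n}|P_{\alpha,\beta}(z,\zeta)|\,dm_n(\zeta) \leq \epsilon\, K(\alpha,\beta)$ by \eqref{L1p}. For $S \neq \emptyset$, Fubini together with the product formula \eqref{prodpoisson} yields
\[
\int_{E_S} |P_{\alpha, \beta}(z, \zeta)|\, dm_n(\zeta) = \prod_{j \in S} \left( \int_{I_j^c} |P_{\alpha_j, \beta_j}(z_j, \zeta_j)|\, dm_1(\zeta_j) \right) \prod_{j \notin S} \left( \int_{I_j} |P_{\alpha_j, \beta_j}(z_j, \zeta_j)|\, dm_1(\zeta_j) \right),
\]
where the factors with $j \notin S$ are bounded uniformly in $z_j$ (dominated by the full one-dimensional integrals, finite by \eqref{L1est}), and at least one factor with $j \in S$ tends to $0$ by the concentration estimate. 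Since there are only finitely many subsets $S$, taking $\limsup_{z \to \zeta^0}$ gives $\limsup_{z \to \zeta^0} |P_{\alpha, \beta}[\psi](z)| \leq \epsilon\, K(\alpha,\beta)$, and letting $\epsilon \to 0$ completes the argument.
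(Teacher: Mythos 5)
Your proof is correct and takes essentially the same approach as the paper: the same reduction to $\varphi(\zeta^0)=0$ via Theorem \ref{nepprod}, the same bound $\epsilon\, K(\alpha,\beta)$ on a small product neighborhood using \eqref{L1p}, and the same key mechanism of one-dimensional kernel decay away from $\zeta_j^0$, driven by $(1-\vert z_j\vert^2)^{\Re\alpha_j+\Re\beta_j+1} \to 0$ against a denominator bounded away from zero. The only difference is bookkeeping: the paper covers the complement of the product neighborhood by the $n$ sets $\{\vert t_j\vert \geq \delta\}$ and bounds the far kernel factor by its supremum, whereas you partition $\mathbb T^n$ into the $2^n$ product sets $E_S$ and bound the far factors by their integrals.
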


\begin{proof}
Let us consider an auxilliary function $\varphi (\zeta) - \varphi (\zeta^0)$. An application of Theorem \ref{nepprod} to the constant function $\psi (\zeta) = \varphi (\zeta^0)$ shows that we can assume, without loss of generality, that
$\varphi (\zeta^0) = 0$. For $z = (r_1 e^{i \theta_1}, \ldots, r_n e^{i \theta_n})$ in $ \mathbb D^n$  we have, see \eqref{poissonL1}:
\begin{align*}
\vert u(z) \vert \leq & \int_{\mathbb T^n}  \vert \varphi (\zeta) \vert
\prod_{j = 1}^n \vert P_{\alpha_j, \beta_j}(r_j e^{i \theta_j}, \zeta_j) \vert dm_n(\zeta) \\
= & (2\pi)^{-n} \int_{-\pi}^\pi \cdots  \int_{-\pi}^\pi \mid \varphi (e^{it_1}, \ldots, e^{it_n}) \mid  \prod_{j=1}^n
\mid v_{\alpha_j, \beta_j} (r_j e^{i(\theta_j - t_j)})  \mid dt_1 \ldots dt_n
\end{align*}
For $0 < \delta < \pi$ and $\theta = (\theta_1, \ldots, \theta_n)$ in $[-\pi, \pi]^n$ we write 
$$M_\infty (\varphi, \theta, \delta) = \sup_{\vert \theta_j^\prime - \theta_j \vert < \delta} \vert \varphi (e^{i\theta_1^\prime},
\ldots, e^{i \theta_n^\prime}) \vert.$$
The complement of $[-\delta, \delta]^n$ in $[-\pi, \pi]^n$ is contained in the union of the sets 
$$S_j = \{ \theta \in [-\pi, \pi ]^n : \vert \theta_j \vert \geq \delta \}, \qquad 1 \leq j \leq n.$$
Thus we have $\vert u(z) \vert \leq I_0(z, \delta) +  I_1(z, \delta)  + I_2(z, \delta) + \cdots + I_n(z, \delta)$ where
\begin{align*}
I_0 (z, \delta)  = & \frac{1}{(2\pi)^n} \int_{[-\delta, \delta]^n} 
\vert \varphi (e^{i (\theta_1 - t_1)}, \ldots, e^{i (\theta_n - t_n)}) 
\vert \prod_{j=1}^n \vert v_{\alpha_j, \beta_j}(r_j e^{i t_j}) \vert dt_1\ldots dt_n \\
\leq &  \frac{ M_\infty (\varphi, \theta, \delta)}{(2\pi)^n} \int_{[-\delta, \delta]^n} \prod_{j=1}^n \vert v_{\alpha_j, \beta_j}(r_j e^{i t_j}) \vert 
dt_1 \ldots dt_n 
\leq   K(\alpha, \beta) M_\infty (\varphi, \theta, \delta)
\end{align*} 
and, for $1 \leq j \leq n$:
\begin{align*}
I_j(z, \delta) =  \frac{1}{(2\pi)^n} & \int_{S_j} \vert \varphi (e^{i (\theta_1 - t_1)}, \ldots, e^{i (\theta_n - t_n)}) \vert 
 \prod_{k=1}^n \vert  v_{\alpha_k, \beta_k} (r_k e^{i t_k})  \vert  dt_1 \ldots dt_n   \\
\leq &   \frac{\| \varphi \|_\infty}{(2\pi)^n}                   \prod_{k \not= j} \int_{-\pi}^\pi 
\vert v_{\alpha_k, \beta_k} (r_k e^{i t_k})  \vert dt_k 
\cdot \int_{\delta < \vert t_j \vert \leq \pi} \vert v_{\alpha_j, \beta_j}(r_j e^{i t_j}) \vert dt_j   \\
\leq & \frac{K_j(\alpha, \beta)}{2\pi}  \| \varphi \|_\infty 
\sup_{\delta < \vert t_j \vert \leq \pi} \vert  v_{\alpha_j, \beta_j} (r_j e^{i t_j}) \vert  \\
\leq & C_j(\delta)   K_j(\alpha, \beta)(1 - r_j^2)^{\Re \alpha_j + \Re \beta_j + 1}  \| \varphi \|_\infty.
\end{align*}
Hence, setting $C_{\alpha, \beta} (\delta) = \max_j C_j(\delta) K_j(\alpha, \beta)$, we have obtained
\begin{equation}\label{i0in}
\vert u(z) \vert \leq  K(\alpha, \beta) M_\infty (\varphi, \theta, \delta) + C_{\alpha, \beta} (\delta) \| \varphi \|_\infty
\sum_{j=1}^n (1 - r_j^2)^{\Re \alpha_j + \Re \beta_j + 1}.
\end{equation}
Given $\varepsilon > 0$ we can choose $\delta > 0$ such that $\mid \theta_j - \theta_j^0 \mid < 2\delta$, $1 \leq j \leq n$,
implies $\mid \varphi (\zeta) \mid < \varepsilon$, where $\zeta = (e^{i\theta_1}, \ldots, e^{i\theta_n})$. Therefore, if
$\mid \theta_j - \theta_j^0 \mid < \delta$, $1 \leq j \leq n$, then $ M_\infty (\varphi, \theta, \delta) \leq \varepsilon$. Now the
result follows from \eqref{i0in}, conditions $\Re \alpha_j + \Re \beta_j > -1$ and observation that $z = (r_1 e^{i \theta_1},
\ldots, r_n e^{i \theta_n})$ converges to $\zeta^0$ if and only if $r_j \to 1$ for all $1 \leq j \leq n$ and $\theta_j \to
\theta_j^0$ for all $1 \leq j \leq n$.
\end{proof}

\begin{definition}\label{shLp}
Let $1 \leq p < \infty$. We define the space $sh_{\alpha, \beta}^p({\mathbb D^n})$ as the set of all $u \in sh_{\alpha, \beta}
(\mathbb D^n)$ such that
\begin{equation}\label{shpnorm}
\| u \|_{\alpha, \beta; p} = \sup_{{\bf r} \in [0,1)^n}     \left( \int_{\mathbb T^n} 
\vert u_{\bf r}  (\zeta) \vert^p dm_n (\zeta) \right)^\frac{1}{p} < \infty.
\end{equation}
\end{definition}
It is easy to verify that the spaces $sh_{\alpha, \beta}^p (\mathbb D^n)$ are Banach spaces with respect to the above norm.

\begin{proposition}\label{gm}
If $\mu \in \mathcal M (\mathbb T^n)$ then $u = P_{\alpha,\beta}[d\mu]$ belongs to
$sh_{\alpha, \beta}^1 (\mathbb D^n)$ and $\| u \|_{\alpha, \beta;1} \leq K(\alpha, \beta) \| \mu \|$.
\end{proposition}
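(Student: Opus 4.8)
The plan is to reduce everything to the convolution estimate already packaged in \eqref{convPmu2} and \eqref{L1V}. First, Proposition \ref{shPofmeasures} immediately gives that $u = P_{\alpha,\beta}[d\mu]$ belongs to $sh_{\alpha,\beta}(\mathbb{D}^n)$, so it only remains to establish the norm bound defining \eqref{shpnorm}; once the uniform $L^1$ estimate on the slices $u_{\bf r}$ is in hand, membership in $sh_{\alpha,\beta}^1(\mathbb{D}^n)$ follows by definition.

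Fix ${\bf r} \in [0,1)^n$. By \eqref{convPmu2} we have $u_{\bf r} = (V_{\alpha,\beta})_{\bf r} \ast d\mu$, where the convolution is taken with respect to the group $\mathbb{T}^n$. I would then estimate the $L^1$ norm of this convolution directly: write it out, use Tonelli's theorem to interchange the integration in $\zeta$ with the integration against $d|\mu|$, and finally exploit translation invariance of the Haar measure $m_n$ to factor out $\|(V_{\alpha,\beta})_{\bf r}\|_{L^1(\mathbb{T}^n)}$. Using the identity $P_{\alpha,\beta}({\bf r}\cdot\zeta, \eta) = (V_{\alpha,\beta})_{\bf r}(\zeta \cdot \overline{\eta})$ coming from \eqref{prodpoisson}, this reads
\begin{align*}
\int_{\mathbb{T}^n} |u_{\bf r}(\zeta)| \, dm_n(\zeta) & \leq \int_{\mathbb{T}^n} \int_{\mathbb{T}^n} |(V_{\alpha,\beta})_{\bf r}(\zeta \cdot \overline{\eta})| \, d|\mu|(\eta) \, dm_n(\zeta) \\
& = \int_{\mathbb{T}^n} \|(V_{\alpha,\beta})_{\bf r}\|_{L^1(\mathbb{T}^n)} \, d|\mu|(\eta),
\end{align*}
where the inner $\zeta$-integral is evaluated by the measure-preserving change of variable $\xi = \zeta \cdot \overline{\eta}$. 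Invoking the uniform bound \eqref{L1V} then gives $\int_{\mathbb{T}^n} |u_{\bf r}(\zeta)| \, dm_n(\zeta) \leq K(\alpha,\beta) \|\mu\|$, and taking the supremum over ${\bf r} \in [0,1)^n$ yields $\|u\|_{\alpha,\beta;1} \leq K(\alpha,\beta)\|\mu\|$, as claimed.

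The argument is essentially Young's convolution inequality $\|g \ast d\mu\|_{L^1} \leq \|g\|_{L^1}\|\mu\|$ for $g \in L^1(\mathbb{T}^n)$ and $\mu \in \mathcal{M}(\mathbb{T}^n)$, so there is no serious obstacle. The only point requiring a little care is the Tonelli interchange, which is legitimate because the integrand $|(V_{\alpha,\beta})_{\bf r}(\zeta \cdot \overline{\eta})|$ is non-negative and jointly measurable on $\mathbb{T}^n \times \mathbb{T}^n$, and the resulting iterated integral is finite by \eqref{L1V}. I note that one could equivalently run the whole estimate starting from \eqref{L1p} in place of the convolution, which avoids explicit mention of $\ast$ altogether.
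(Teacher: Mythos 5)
Your proposal is correct and follows essentially the same route as the paper: separate $(\alpha,\beta)$-harmonicity from Proposition \ref{shPofmeasures}, the convolution identity \eqref{convPmu2}, and the uniform bound \eqref{L1V}, combined via the Young-type inequality $\| (V_{\alpha,\beta})_{\bf r} \ast d\mu \|_1 \leq \| (V_{\alpha,\beta})_{\bf r} \|_1 \| \mu \|$. The only difference is that you spell out the Tonelli and translation-invariance justification of that inequality, which the paper invokes without comment.
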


\begin{proof}
By Proposition  \ref{shPofmeasures} $u$ is separately $(\alpha, \beta)$ - harmonic and  by \eqref{convPmu2} and \eqref{L1V} we have 
$$\| u_{\bf r} \|_1 = \| (V_{\alpha, \beta})_{\bf r} \ast d \mu \|_1 \leq \| (V_{\alpha, \beta})_{\bf r}
\|_1 \| \mu \| \leq K(\alpha, \beta) \| \mu \|, \qquad {\bf r} \in [0, 1)^n.$$
\end{proof}
We proved that $P_{\alpha, \beta}$ is a bounded linear operator from $\mathcal M (\mathbb T^n)$ to 
$sh_{\alpha, \beta}^1 (\mathbb D^n)$, with norm bounded by $K(\alpha, \beta)$.

The following lemma will be useful in arguments involving duality.

\begin{lemma}\label{dualitylemma}
Assume that $\mu$ and $\nu$ are complex Borel measures on $\mathbb T^n$, $u = P_{\alpha, \beta}[d\mu]$,
$v = P_{\beta, \alpha}[d\nu]$ and let ${\bf r} \in [0, 1)^n$. Then
$\langle u_{\bf r}, d\nu \rangle = \langle v_{\bf r}, d\mu \rangle$, or explicitly
\begin{equation}\label{dualityeq}
\int_{\mathbb T^n} (P_{\alpha, \beta}[d\mu]) ({\bf r} \cdot \zeta) d\nu(\zeta) = 
\int_{\mathbb T^n} (P_{\beta, \alpha}[d\nu]) ({\bf r} \cdot \xi) d\mu(\xi).
\end{equation}
\end{lemma}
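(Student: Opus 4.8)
The plan is to reduce the identity \eqref{dualityeq} to a pointwise symmetry of the Poisson kernel and then invoke Fubini's theorem. First I would expand the left-hand side using the definition \eqref{poisson} of the Poisson integral: since $u_{\bf r}(\zeta) = u({\bf r}\cdot\zeta) = \int_{\mathbb T^n} P_{\alpha,\beta}({\bf r}\cdot\zeta,\xi)\,d\mu(\xi)$, we obtain
\begin{equation*}
\langle u_{\bf r}, d\nu\rangle = \int_{\mathbb T^n}\int_{\mathbb T^n} P_{\alpha,\beta}({\bf r}\cdot\zeta,\xi)\,d\mu(\xi)\,d\nu(\zeta).
\end{equation*}
For fixed ${\bf r}\in[0,1)^n$ the map $(\zeta,\xi)\mapsto P_{\alpha,\beta}({\bf r}\cdot\zeta,\xi)$ is continuous, hence bounded, on the compact set $\mathbb T^n\times\mathbb T^n$, while $\mu$ and $\nu$ are finite measures; thus Fubini's theorem applies and justifies every interchange of the order of integration below.

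The crux is the algebraic identity
\begin{equation*}
P_{\alpha,\beta}({\bf r}\cdot\zeta,\xi) = P_{\beta,\alpha}({\bf r}\cdot\xi,\zeta), \qquad \zeta,\xi\in\mathbb T^n, \quad {\bf r}\in[0,1)^n.
\end{equation*}
To verify it I would work factor by factor in \eqref{abkernel}. Writing ${\bf r}\cdot\zeta = (r_1\zeta_1,\ldots,r_n\zeta_n)$ and using $\vert r_j\zeta_j\vert^2 = r_j^2$ (since $\vert\zeta_j\vert = 1$), the exponent $\alpha_j+\beta_j+1$ and the numerator factor $(1-r_j^2)^{\alpha_j+\beta_j+1}$ are unchanged when both $(\alpha_j,\beta_j)$ and $(\zeta_j,\xi_j)$ are swapped, and the constants match because $c_{\alpha_j,\beta_j} = c_{\beta_j,\alpha_j}$ is immediate from the symmetry of \eqref{coef}. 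For the denominators the two factors simply exchange roles: the factor $(1-r_j\zeta_j\overline{\xi_j})^{\alpha_j+1}$ coming from $P_{\alpha,\beta}({\bf r}\cdot\zeta,\xi)$ coincides with the factor $(1-r_j\overline{\xi_j}\zeta_j)^{\alpha_j+1}$ arising in $P_{\beta,\alpha}({\bf r}\cdot\xi,\zeta)$ by commutativity of multiplication, and symmetrically for the two $(\,\cdot\,)^{\beta_j+1}$ factors.

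Granting the symmetry, the double integral equals $\int_{\mathbb T^n}\big(\int_{\mathbb T^n} P_{\beta,\alpha}({\bf r}\cdot\xi,\zeta)\,d\nu(\zeta)\big)\,d\mu(\xi)$ after a further use of Fubini, and the inner integral is precisely $v({\bf r}\cdot\xi)$ by the definition of $v = P_{\beta,\alpha}[d\nu]$; hence the whole expression equals $\int_{\mathbb T^n} v({\bf r}\cdot\xi)\,d\mu(\xi) = \langle v_{\bf r}, d\mu\rangle$, which is the desired equality. I expect the only delicate point to be the bookkeeping in the kernel symmetry, specifically tracking that the roles of $\alpha_j$ and $\beta_j$ are interchanged exactly in step with the torus variables $\zeta_j$ and $\xi_j$, whereas the measure-theoretic step is routine given the continuity of the kernel for fixed ${\bf r}$ and the finiteness of $\mu$ and $\nu$.
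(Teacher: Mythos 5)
Your proposal is correct and follows essentially the same route as the paper: the identity $P_{\alpha,\beta}({\bf r}\cdot\zeta,\xi)=P_{\beta,\alpha}({\bf r}\cdot\xi,\zeta)$ combined with Fubini's theorem. The paper states this kernel symmetry without verification and does not spell out the Fubini justification, so your factor-by-factor check of \eqref{abkernel} and the boundedness/finiteness argument are just more detailed versions of the same proof.
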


\begin{proof}
Since $P_{\alpha, \beta}({\bf r} \cdot \zeta, \xi) = P_{\beta,\alpha}({\bf r} \cdot \xi, \zeta)$ we have
\begin{align*} 
\int_{\mathbb T^n} (P_{\alpha, \beta}[d\mu]) ({\bf r} \cdot \zeta) d\nu(\zeta)  & = 
\int_{\mathbb T^n} \left( \int_{\mathbb T^n} P_{\alpha, \beta} ({\bf  r} \cdot \zeta, \xi) d\mu (\xi) \right) d\nu(\zeta) \\
& = \int_{\mathbb T^n} \left( \int_{\mathbb T^n} P_{\beta, \alpha} ({\bf r} \cdot \xi, \zeta) d\nu (\zeta)
\right) d\mu (\xi) \\
& = \int_{\mathbb T^n} (P_{\beta, \alpha}[d\nu]) ({\bf r} \cdot \xi) d\mu (\xi).
\end{align*}
\end{proof}

It was proved in \cite{Seur}  that if $ f (z) = P[d\mu](z)$ where $\mu \in \mathcal M (\mathbb T^n)$, then 
$ f_r dm \rightarrow d \mu $
weak$^\ast$ as $r \rightarrow 1$ where $P$ is the Poisson kernel for $n$ - harmonic functions (the case $\alpha_j =
\beta_j = 0$). The next theorem generalizes this result to separately $(\alpha, \beta)$ - harmonic functions, at the same
time allowing for more general limits.

\begin{theorem}\label{Cweakstar}
Let $u = P_{\alpha,\beta}[d\mu]$ where $\mu \in \mathcal M (\mathbb T^n)$. Then
$u_{\bf r} dm_n$ converges weak$^\ast$ to $d\mu$ as ${\bf r} = (r_1, \ldots, r_n) \to (1, \ldots, 1) = {\bf 1}$ in $[0, 1)^n$.
\end{theorem}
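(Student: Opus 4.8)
The plan is to test $u_{\bf r}\,dm_n$ against an arbitrary $\varphi\in C(\mathbb T^n)$ and to show that $\int_{\mathbb T^n}\varphi\, u_{\bf r}\,dm_n$ converges to $\langle\varphi, d\mu\rangle$ as ${\bf r}\to{\bf 1}$; this is precisely what weak$^\ast$ convergence of $u_{\bf r}\,dm_n$ to $d\mu$ means. The key device is the duality identity of Lemma \ref{dualitylemma}, which is tailored exactly to the obstruction noted after \eqref{zetar}: the dilate $u({\bf r}\cdot z)$ need not be separately $(\alpha,\beta)$ - harmonic, so one cannot apply the Banach - Alaoglu theorem to $\{u_{\bf r}\}$ directly. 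The duality identity sidesteps this by transferring the dilation from the fixed measure $\mu$ onto the continuous test datum.

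Concretely, I would set $\nu = \varphi\, dm_n$ and put $v = P_{\beta,\alpha}[\varphi] = P_{\beta,\alpha}[d\nu]$. The roles of $\alpha$ and $\beta$ are interchanged here, but this is harmless: the standing hypotheses \eqref{conditions} are symmetric in $\alpha_j$ and $\beta_j$, so every result proved for $P_{\alpha,\beta}$ applies verbatim to $P_{\beta,\alpha}$. Lemma \ref{dualitylemma} then gives
$$
\int_{\mathbb T^n} \varphi(\zeta)\, u_{\bf r}(\zeta)\, dm_n(\zeta) = \langle u_{\bf r}, d\nu\rangle = \langle v_{\bf r}, d\mu\rangle = \int_{\mathbb T^n} v_{\bf r}(\xi)\, d\mu(\xi).
$$

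Next I would invoke Theorem \ref{nepprod}, applied to $P_{\beta,\alpha}$: since $\varphi\in C(\mathbb T^n)$, the function $v$ extends continuously to $\overline{\mathbb D^n}$ with $v\vert_{\mathbb T^n} = \varphi$. As ${\bf r}\to{\bf 1}$ in $[0,1)^n$ the points ${\bf r}\cdot\xi$ approach $\xi$ uniformly in $\xi\in\mathbb T^n$, and $v$ is uniformly continuous on the compact set $\overline{\mathbb D^n}$; hence $v_{\bf r}\to\varphi$ uniformly on $\mathbb T^n$. Consequently $\int_{\mathbb T^n} v_{\bf r}\, d\mu \to \int_{\mathbb T^n}\varphi\, d\mu = \langle\varphi, d\mu\rangle$, the total variation $\|\mu\|$ controlling the error. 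Combining this with the displayed identity yields the desired limit for every $\varphi\in C(\mathbb T^n)$, which completes the proof.

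I expect the only genuinely delicate point to be the uniform convergence $v_{\bf r}\to\varphi$ on $\mathbb T^n$; once continuity up to the boundary is secured through Theorem \ref{nepprod}, this reduces to a routine compactness argument on $\overline{\mathbb D^n}$. The symmetry remark licensing the use of $P_{\beta,\alpha}$ should be stated explicitly, since Theorem \ref{nepprod} is phrased for $P_{\alpha,\beta}$. Everything else is bookkeeping with the duality identity of Lemma \ref{dualitylemma}.
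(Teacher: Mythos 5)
Your proposal is correct and follows essentially the same route as the paper's own proof: fix $\varphi \in C(\mathbb T^n)$, apply Lemma \ref{dualitylemma} with $d\nu = \varphi\, dm_n$ to transfer the dilation onto $v = P_{\beta,\alpha}[\varphi]$, and then use Theorem \ref{nepprod} to get uniform convergence $v_{\bf r} \to \varphi$ on $\mathbb T^n$. Your additional remarks --- the explicit symmetry of conditions \eqref{conditions} under swapping $\alpha$ and $\beta$, and the uniform-continuity argument on $\overline{\mathbb D^n}$ --- only spell out details the paper leaves implicit.
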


\begin{proof}
We fix $\varphi \in C(\mathbb T^n)$ and set $v = P_{\beta, \alpha}[\varphi]$. Using Lemma \ref{dualitylemma} with 
$d\nu = \varphi dm_n$ we obtain
$$
\int_{\mathbb T^n} u({\bf r} \cdot \zeta) \varphi (\zeta) dm_n(\zeta) = \int_{\mathbb T^n} v({\bf r} \cdot \xi) d\mu (\xi).
$$
However, by Theorem \ref{nepprod}, $v({\bf r} \cdot \xi)$ converges uniformly on $\mathbb T^n$ to $\varphi  (\xi)$ as 
$\bf r$ converges to $\bf 1$. Hence, $\lim_{{\bf r} \to {\bf 1}} \int_{\mathbb T^n} u({\bf r} \cdot \zeta) \varphi (\zeta) dm_n(\zeta) = \int_{\mathbb T^n} \varphi (\xi) d\mu (\xi)$.
\end{proof}

\begin{theorem}\label{LPnorm}
If $\psi \in L^p(\mathbb T^n)$ for some $1 \leq p \leq \infty$ then $P_{\alpha, \beta}[\psi]$ is in the space
$sh_{\alpha, \beta}^p (\mathbb D^n)$ and
\begin{equation}\label{urp}
 \| P_{\alpha, \beta}[\psi] \|_{\alpha, \beta ;p} \leq K(\alpha, \beta) \|\psi \|_p.
 \end{equation}
For $1 \leq p < \infty$ we have 
\begin{equation}\label{Lpconv}
\lim_{\bf r \to 1} \| (P_{\alpha, \beta}[\psi])_{\bf r} - \psi \|_{L^p(\mathbb T^n)} = 0, \qquad \psi \in L^p (\mathbb T^n).
\end{equation}
\end{theorem}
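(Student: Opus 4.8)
The plan is to treat the two assertions separately, obtaining the norm bound \eqref{urp} from Young's inequality and the convergence \eqref{Lpconv} from a standard approximate-identity argument anchored by the continuous case. First I would establish \eqref{urp}. For $\psi \in L^p(\mathbb{T}^n)$ with $1 \leq p < \infty$ we have $L^p(\mathbb{T}^n) \subset L^1(\mathbb{T}^n)$ since $m_n$ is finite, so \eqref{convPf} applies and gives $(P_{\alpha,\beta}[\psi])_{\bf r} = (V_{\alpha,\beta})_{\bf r} \ast \psi$. Young's inequality for convolution on the compact group $\mathbb{T}^n$ then yields $\|(P_{\alpha,\beta}[\psi])_{\bf r}\|_p \leq \|(V_{\alpha,\beta})_{\bf r}\|_1 \|\psi\|_p$, and combining this with the uniform bound \eqref{L1V} and taking the supremum over ${\bf r} \in [0,1)^n$ produces \eqref{urp}. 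That $P_{\alpha,\beta}[\psi] \in sh_{\alpha,\beta}(\mathbb{D}^n)$ follows from Proposition \ref{shPofmeasures} applied to $d\mu = \psi \, dm_n$, so membership in $sh_{\alpha,\beta}^p(\mathbb{D}^n)$ is immediate from Definition \ref{shLp}. The case $p = \infty$ is precisely Theorem \ref{Hinfty}, since $\sup_{\bf r}\|u_{\bf r}\|_{L^\infty(\mathbb{T}^n)} = \|u\|_{L^\infty(\mathbb{D}^n)}$.

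For \eqref{Lpconv} I would first dispose of the continuous case. If $\varphi \in C(\mathbb{T}^n)$, then by Theorem \ref{nepprod} the function $u = P_{\alpha,\beta}[\varphi]$ extends continuously to $\overline{\mathbb{D}^n}$ with $u|_{\mathbb{T}^n} = \varphi$. Since $\overline{\mathbb{D}^n}$ is compact, $u$ is uniformly continuous there, and because $|{\bf r}\cdot\zeta - \zeta| = \big(\sum_{j=1}^n (1-r_j)^2\big)^{1/2}$ is independent of $\zeta \in \mathbb{T}^n$ and tends to $0$ as ${\bf r}\to{\bf 1}$, we get $\sup_{\zeta}|u({\bf r}\cdot\zeta) - \varphi(\zeta)| \to 0$. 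Thus $(P_{\alpha,\beta}[\varphi])_{\bf r}\to\varphi$ uniformly, hence in $L^p(\mathbb{T}^n)$.

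Finally I would pass to general $\psi \in L^p(\mathbb{T}^n)$ by density. Given $\varepsilon > 0$, pick $\varphi \in C(\mathbb{T}^n)$ with $\|\psi - \varphi\|_p < \varepsilon$, which is possible since $C(\mathbb{T}^n)$ is dense in $L^p(\mathbb{T}^n)$ for $1 \leq p < \infty$. By linearity of $P_{\alpha,\beta}$ the triangle inequality splits $\|(P_{\alpha,\beta}[\psi])_{\bf r} - \psi\|_p$ into the three terms $\|(P_{\alpha,\beta}[\psi-\varphi])_{\bf r}\|_p$, $\|(P_{\alpha,\beta}[\varphi])_{\bf r} - \varphi\|_p$ and $\|\varphi - \psi\|_p$; the first is at most $K(\alpha,\beta)\varepsilon$ by \eqref{urp}, the middle term tends to $0$ by the continuous case, and the last is below $\varepsilon$. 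This gives $\limsup_{{\bf r}\to{\bf 1}}\|(P_{\alpha,\beta}[\psi])_{\bf r} - \psi\|_p \leq (K(\alpha,\beta)+1)\varepsilon$, and letting $\varepsilon\to 0$ finishes the proof. The only point requiring care is the uniform, rather than merely pointwise, convergence in the continuous case, but this is guaranteed by the compactness of $\overline{\mathbb{D}^n}$ together with the $\zeta$-independence of the displacement ${\bf r}\cdot\zeta - \zeta$; the rest is the routine three-$\varepsilon$ mechanism, so no genuine obstacle remains once Theorem \ref{nepprod} is in hand.
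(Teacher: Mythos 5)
Your proposal is correct and follows essentially the same route as the paper's proof: the bound \eqref{urp} via \eqref{convPf}, Young's inequality and \eqref{L1V}, and \eqref{Lpconv} via density of $C(\mathbb T^n)$ in $L^p(\mathbb T^n)$ together with the uniform convergence supplied by Theorem \ref{nepprod} in a three-$\varepsilon$ argument. The only differences are cosmetic: you spell out the uniform-continuity justification of the continuous case and invoke Theorem \ref{Hinfty} separately for $p=\infty$, both of which the paper leaves implicit in its Young's-inequality computation.
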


\begin{proof}
In proving \eqref{urp} we argue as in Proposition \ref{gm} and use Young's inequality for convolutions.
Indeed,  $u = P_{\alpha, \beta}[\psi]$ is separately 
$(\alpha, \beta)$ - harmonic by Proposition \ref{shPofmeasures} and  by \eqref{convPf} and \eqref{L1V} we have 
$$\| u_{\bf r} \|_p = \| (V_{\alpha, \beta})_{\bf r} \ast f \|_p \leq \| (V_{\alpha, \beta})_{\bf r}
\|_1 \| f \|_p \leq K(\alpha, \beta) \| f \|_p, \qquad {\bf r} \in [0, 1)^n.$$

Now we assume $1 \leq p < \infty$. Let $\psi \in L^p(\mathbb T^n)$ and let $\varepsilon > 0$. Since
$C(\mathbb T^n) $ is dense in $L^p(\mathbb T^n)$ we can choose $\varphi \in C(\mathbb T^n)$ such that 
$\|\varphi-\psi \|_p  < \varepsilon $.  Then, by \eqref{urp}:
\begin{align*}
\|  (P_{\alpha, \beta}[\psi])_{\bf r} - \psi\|_p & \leq  
\|  (P_{\alpha, \beta}[\psi - \varphi])_{\bf r} \|_p +
\|  (P_{\alpha, \beta}[\varphi])_{\bf r}    -     \varphi \|_p + \| \varphi-\psi \|_p \\
& \leq  [K(\alpha, \beta) + 1] \varepsilon + \|  (P_{\alpha, \beta}[\varphi])_{\bf r}    -     \varphi \|_p \\
&  \leq  [K(\alpha, \beta) + 1] \varepsilon + \|  (P_{\alpha, \beta}[\varphi])_{\bf r}    -     \varphi \|_{C(\mathbb T^n)}.
\end{align*}
Since, by Theorem \ref{nepprod}, $\lim_{\bf r \rightarrow 1} 
\| (P_{\alpha, \beta}[\varphi])_{\bf r} - \varphi \|_{C(\mathbb T^n)} = 0$, we obtain from the above estimates
$\limsup_{\bf r \rightarrow 1}  \|  (P_{\alpha, \beta}[\psi])_{\bf r} - \psi\|_p  \leq  [K(\alpha, \beta) + 1] \varepsilon$ which
gives \eqref{Lpconv}.
\end{proof}

\begin{theorem}\label{L1weakstar}
Let $u = P_{\alpha,\beta}[f]$ where $f \in L^\infty (\mathbb T^n)$. Then
$u_{\bf r}$ converges weak$^\ast$ to $f$ as ${\bf r} = (r_1, \ldots, r_n) \to (1, \ldots, 1) = {\bf 1}$ in $[0, 1)^n$.
\end{theorem}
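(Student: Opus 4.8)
The plan is to interpret the statement within the duality $L^\infty(\mathbb{T}^n) = (L^1(\mathbb{T}^n))^\ast$. Since $\|u_{\bf r}\|_{L^\infty} \le K(\alpha,\beta)\|f\|_{L^\infty}$ by Theorem \ref{LPnorm} (with $p=\infty$), the family $\{u_{\bf r}\}$ is bounded in $L^\infty(\mathbb{T}^n)$, and weak$^\ast$ convergence $u_{\bf r} \to f$ means precisely that
\[
\int_{\mathbb{T}^n} u_{\bf r}(\zeta) g(\zeta)\, dm_n(\zeta) \longrightarrow \int_{\mathbb{T}^n} f(\zeta) g(\zeta)\, dm_n(\zeta)
\]
for every test function $g \in L^1(\mathbb{T}^n)$. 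This is the exact analogue of Theorem \ref{Cweakstar}, with the roles of $\mathcal{M}(\mathbb{T}^n)$ and $C(\mathbb{T}^n)$ replaced by $L^\infty(\mathbb{T}^n)$ and $L^1(\mathbb{T}^n)$, and I would model the proof on it.

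The key step is the duality identity of Lemma \ref{dualitylemma}. Fix $g \in L^1(\mathbb{T}^n)$ and apply that lemma with $d\mu = f\, dm_n$ and $d\nu = g\, dm_n$; writing $v = P_{\beta,\alpha}[g]$ this gives
\[
\int_{\mathbb{T}^n} u_{\bf r}(\zeta) g(\zeta)\, dm_n(\zeta) = \int_{\mathbb{T}^n} v_{\bf r}(\xi) f(\xi)\, dm_n(\xi), \qquad {\bf r} \in [0,1)^n.
\]
The point of this swap is that $g$ lies in $L^1$: although $u_{\bf r}$ itself cannot be expected to converge to $f$ in any norm (here $f$ is merely bounded), the transferred kernel $P_{\beta,\alpha}$ now acts on an $L^1$ function, where strong convergence is available.

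Indeed, the conditions \eqref{conditions} on $(\alpha,\beta)$ are symmetric under interchanging $\alpha$ and $\beta$, so Theorem \ref{LPnorm} applies verbatim to $P_{\beta,\alpha}$; taking $p = 1$ there yields $\lim_{{\bf r}\to{\bf 1}}\|v_{\bf r} - g\|_{L^1(\mathbb{T}^n)} = 0$. Combining this with $f \in L^\infty$ and H\"older's inequality,
\[
\left| \int_{\mathbb{T}^n} v_{\bf r}(\xi) f(\xi)\, dm_n(\xi) - \int_{\mathbb{T}^n} g(\xi) f(\xi)\, dm_n(\xi) \right| \le \|v_{\bf r} - g\|_{L^1} \|f\|_{L^\infty} \longrightarrow 0,
\]
so the left side of the duality identity converges to $\int_{\mathbb{T}^n} f g\, dm_n$, which is the required weak$^\ast$ convergence.

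The argument is short and presents no genuine obstacle beyond setting it up correctly; the one thing to be careful about is the \emph{direction} of the duality swap — one must push the limit onto the $L^1$ factor $g$ (via $P_{\beta,\alpha}$), not onto $f$, since only for $g \in L^1$ is the norm convergence of Theorem \ref{LPnorm} at our disposal. Checking the symmetry of the hypotheses \eqref{conditions}, so that Theorem \ref{LPnorm} may legitimately be invoked for $P_{\beta,\alpha}$, is the only other point worth a brief remark.
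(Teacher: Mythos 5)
Your proof is correct and follows essentially the same route as the paper's: fix a test function in $L^1(\mathbb T^n)$, swap via Lemma \ref{dualitylemma} onto the kernel $P_{\beta,\alpha}$, and invoke the $L^1$-norm convergence from Theorem \ref{LPnorm} to pass to the limit. Your explicit remarks on H\"older's inequality and on the symmetry of conditions \eqref{conditions} under interchanging $\alpha$ and $\beta$ are just useful elaborations of steps the paper leaves implicit.
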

\begin{proof}
We fix $\varphi \in L^1(\mathbb T^n)$ and set $v = P_{\beta, \alpha}[\varphi]$. Using Lemma \ref{dualitylemma} with 
$d\nu = \varphi dm_n$ and $d\mu = f dm_n$ we obtain
$$
\int_{\mathbb T^n} u({\bf r} \cdot \zeta) \varphi (\zeta) dm_n(\zeta) = 
\int_{\mathbb T^n} v({\bf r} \cdot \xi) f(\xi) dm_n (\xi).
$$
However, by Theorem \ref{LPnorm}, $v({\bf r} \cdot \xi)$ converges in $L^1(\mathbb T^n)$ norm to $\varphi  (\xi)$ as 
$\bf r$ tends to $\bf 1$. Hence, $\lim_{{\bf r} \to {\bf 1}} \int_{\mathbb T^n} u({\bf r} \cdot \zeta) \varphi (\zeta) dm_n(\zeta) = \int_{\mathbb T^n} f(\xi) \varphi (\xi) dm_n (\xi)$.

\end{proof}

Next we turn to integral representation theorems for functions in $sh_{\alpha, \beta}^p(\mathbb D^n)$ spaces.

\begin{theorem}\label{Repr1}
Assume $u$ is separately $(\alpha, \beta)$ - harmonic function on $\mathbb D^n$ such that
\begin{equation}\label{sh1r} 
\sup_{0 \leq r <1} \int_{\mathbb T^n} \mid u(r \zeta) \mid dm_n(\zeta) < \infty.
\end{equation}
Then there is a unique $ \mu \in \mathcal{M}(\mathbb{T}^n)$ such that $u = P_{\alpha, \beta}[d\mu]$. 
\end{theorem}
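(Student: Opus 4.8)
The plan is to produce $\mu$ as a weak$^\ast$ cluster point of the dilates $u_r\,dm_n$ and then identify $u$ with its Poisson integral by means of Theorem \ref{LimitPoisson}. First I would record that hypothesis \eqref{sh1r} says precisely that $M := \sup_{0\le r<1}\|u_r\|_{L^1(\mathbb T^n)}<\infty$, where $u_r(\zeta)=u(r\zeta)$. Fixing any sequence $r_k\uparrow 1$, the complex measures $d\nu_k = u_{r_k}\,dm_n$ satisfy $\|\nu_k\|\le M$, so they lie in a fixed closed ball of $\mathcal M(\mathbb T^n)=C(\mathbb T^n)^\ast$. Since $C(\mathbb T^n)$ is separable this ball is weak$^\ast$ metrizable, hence weak$^\ast$ sequentially compact, and I may pass to a subsequence (still denoted $\nu_k$) with $\nu_k\to\mu$ weak$^\ast$ for some $\mu\in\mathcal M(\mathbb T^n)$.

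Next I would identify the limit pointwise. Fix $z\in\mathbb D^n$; because each $z_j\in\mathbb D$, the map $\zeta\mapsto P_{\alpha,\beta}(z,\zeta)$ is a continuous function on $\mathbb T^n$, since the denominators in \eqref{abkernel} never vanish. Weak$^\ast$ convergence therefore yields
\[
P_{\alpha,\beta}[u_{r_k}](z)=\int_{\mathbb T^n}P_{\alpha,\beta}(z,\zeta)\,d\nu_k(\zeta)\longrightarrow \int_{\mathbb T^n}P_{\alpha,\beta}(z,\zeta)\,d\mu(\zeta)=P_{\alpha,\beta}[d\mu](z).
\]
On the other hand, since $u$ is separately $(\alpha,\beta)$-harmonic, Theorem \ref{LimitPoisson} applied along the diagonal $\mathbf r=(r_k,\ldots,r_k)\to\mathbf 1$ gives $P_{\alpha,\beta}[u_{r_k}](z)\to u(z)$. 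Comparing the two limits forces $u(z)=P_{\alpha,\beta}[d\mu](z)$, and as $z$ was arbitrary this proves the existence of the desired representation.

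For uniqueness, suppose $P_{\alpha,\beta}[d\mu]=0$ for some $\mu\in\mathcal M(\mathbb T^n)$. Then $u\equiv 0$, so $u_{\mathbf r}\,dm_n=0$ for every $\mathbf r$; but Theorem \ref{Cweakstar} asserts that $u_{\mathbf r}\,dm_n\to\mu$ weak$^\ast$ as $\mathbf r\to\mathbf 1$, whence $\mu=0$. Applying this to the difference of two representing measures gives uniqueness.

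The step I expect to be the crux is the identification of the limit, and this is exactly where the novelty lies: as the authors emphasise after \eqref{zetar}, the dilate $z\mapsto u(rz)$ of a separately $(\alpha,\beta)$-harmonic function is in general \emph{not} separately $(\alpha,\beta)$-harmonic, so one cannot simply invoke a reproducing identity for $u_r$ as in the classical harmonic case. Theorem \ref{LimitPoisson} is the precise substitute that rescues the argument, allowing me to recover $u(z)$ as $\lim_k P_{\alpha,\beta}[u_{r_k}](z)$ and thereby match it against the weak$^\ast$ limit $P_{\alpha,\beta}[d\mu](z)$. Everything else — the boundedness, the compactness extraction, and the uniqueness — is routine once this point is in place.
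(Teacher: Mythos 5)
Your proposal is correct and follows essentially the same route as the paper: both extract a weak$^\ast$ cluster point of the diagonal dilates $u_r\,dm_n$ via Banach--Alaoglu and separability of $C(\mathbb T^n)$, test against the continuous function $\zeta\mapsto P_{\alpha,\beta}(z,\zeta)$, and invoke Theorem \ref{LimitPoisson} to identify the limit with $u(z)$, with uniqueness from Theorem \ref{Cweakstar}. You have also correctly pinpointed the role of Theorem \ref{LimitPoisson} as the substitute for the failing reproducing identity for dilates, which is exactly the reason the paper proves that theorem before this one.
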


\begin{proof}
Since a sequence $u((1-1/k)\zeta) dm_n(\zeta)$ of complex Borel measures on $\mathbb T^n$ is bounded in $\mathcal M
(\mathbb T^n) = C(\mathbb T^n)^\ast$ by \eqref{sh1r}, Banach - Alaoglu theorem and separability of $C(\mathbb T^n)$ give us a weak$^\ast$ convergent subsequence. This gives us a sequence $r_l$ in $[0, 1)$ such that 
$\lim_{l \to \infty} r_l = 1$ and a complex Borel measure $\mu$ on $\mathbb T^n$ such that
\begin{equation}\label{subsequence}
\lim_{l \rightarrow \infty} \int_{\mathbb T^n} g(\zeta) u (r_l \zeta) dm_n(\zeta) =
\int_{\mathbb T^n} g(\zeta) d\mu(\zeta)  \qquad  \mbox{for all} \quad g \in C(\mathbb T^n).
\end{equation}
We apply \eqref{subsequence} to $g_z(\zeta) = P_{\alpha, \beta} (z, \zeta)$ where $z$ in $\mathbb D^n$ is fixed. This is
legitimate because $g_z$ is continuous on $\mathbb T^n$. By Theorem
\ref{LimitPoisson} we have
\begin{align*}
P_{\alpha, \beta}[d\mu] (z) & = \int_{\mathbb T^n} P_{\alpha, \beta} (z, \zeta) d\mu (\zeta) = 
\lim_{l \rightarrow \infty} \int_{\mathbb T^n} P_{\alpha, \beta} (z, \zeta) u (r_l \zeta) dm_n(\zeta) \\
& = \lim_{l \rightarrow \infty} P_{\alpha, \beta}[u_{r_l}](z)  = u(z) 
\end{align*}
Uniqueness follows from Theorem \ref{Cweakstar}.
\end{proof}

Theorem \ref{Repr1} and Proposition \ref{gm} imply that the following conditions are equivalent for a given separately
$(\alpha, \beta)$ - harmonic function $u$ on $\mathbb D^n$:
\begin{align*}
\sup_{0 \leq r <1} \int_{\mathbb T^n} \mid u(r \zeta) \mid dm_n(\zeta) < \infty, \\
 \sup_{{\bf r} \in [0,1)^n}   \int_{\mathbb T^n} 
\vert u_{\bf r}  (\zeta) \vert dm_n (\zeta)  < \infty, \\
u = P_{\alpha, \beta}[d\mu] \quad {\rm where} \quad \mu \in \mathcal M (\mathbb T^n).
\end{align*}
For $n$ - harmonic functions equivalence of the last two conditions appeared in \cite{CZ}.

\begin{theorem}\label{ReprLp}
Let $ 1 < p\leq\infty $ and assume $u \in sh_{\alpha, \beta} (\mathbb D^n)$ satisfies
\begin{equation}\label{shpr}
\sup_{0 \leq r < 1} \| u_r \|_{L^p(\mathbb T^n)} < \infty.
\end{equation}
Then there exists a unique function $ \psi \in L^p ({\mathbb T^n}) $ such that $u = P_{\alpha, \beta}[\psi]$.
\end{theorem}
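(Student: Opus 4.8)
The plan is to mimic the proof of Theorem \ref{Repr1}, replacing the duality $\mathcal{M}(\mathbb{T}^n) = C(\mathbb{T}^n)^\ast$ by a duality that exhibits $L^p(\mathbb{T}^n)$ itself as a dual space. For $1 < p < \infty$ I would use $L^p(\mathbb{T}^n) = \bigl(L^{p'}(\mathbb{T}^n)\bigr)^\ast$ with $1/p + 1/p' = 1$, and for $p = \infty$ I would use $L^\infty(\mathbb{T}^n) = \bigl(L^1(\mathbb{T}^n)\bigr)^\ast$; in both cases the predual is separable. This separability is what lets Banach--Alaoglu deliver a weak$^\ast$ convergent \emph{sequence} rather than just a net.

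First I would note that the hypothesis \eqref{shpr} says the family $\{ u_r : 0 \leq r < 1 \}$ is norm-bounded in $L^p(\mathbb{T}^n)$. Since $L^p(\mathbb{T}^n)$ is the dual of a separable Banach space, Banach--Alaoglu together with separability of the predual yields a sequence $r_l \in [0,1)$ with $r_l \to 1$ and a function $\psi \in L^p(\mathbb{T}^n)$ with $u_{r_l} \to \psi$ weak$^\ast$, i.e.
\[
\lim_{l \to \infty} \int_{\mathbb{T}^n} g(\zeta)\, u(r_l \zeta)\, dm_n(\zeta) = \int_{\mathbb{T}^n} g(\zeta)\, \psi(\zeta)\, dm_n(\zeta)
\]
for every $g$ in the relevant predual ($L^{p'}$ or $L^1$). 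Next I would test this against $g = g_z$, where $g_z(\zeta) = P_{\alpha,\beta}(z,\zeta)$ for a fixed $z \in \mathbb{D}^n$. This is legitimate because $g_z$ is continuous on the compact torus $\mathbb{T}^n$, hence bounded, hence a member of every $L^q(\mathbb{T}^n)$ and in particular of the predual. Combining weak$^\ast$ convergence with the definition \eqref{poissonL1} of the Poisson integral and with Theorem \ref{LimitPoisson} gives
\[
P_{\alpha,\beta}[\psi](z) = \lim_{l \to \infty} \int_{\mathbb{T}^n} P_{\alpha,\beta}(z,\zeta)\, u(r_l \zeta)\, dm_n(\zeta) = \lim_{l \to \infty} P_{\alpha,\beta}[u_{r_l}](z) = u(z),
\]
which is the required representation.

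For uniqueness I would argue that the representing function is forced to be a boundary limit of $u_{\bf r}$, and is therefore determined by $u$. When $1 < p < \infty$, formula \eqref{Lpconv} of Theorem \ref{LPnorm} shows $(P_{\alpha,\beta}[\psi])_{\bf r} \to \psi$ in $L^p$-norm, so $\psi$ equals the $L^p$-limit of $u_{\bf r}$; when $p = \infty$, Theorem \ref{L1weakstar} shows $u_{\bf r} \to \psi$ weak$^\ast$, again pinning down $\psi$. By linearity of $P_{\alpha,\beta}$ this gives uniqueness.

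I do not expect a serious obstacle: the argument is entirely parallel to that of Theorem \ref{Repr1}. The one point genuinely requiring care is the role of the exponent range. The endpoint $p = 1$ is deliberately excluded, precisely because $L^1(\mathbb{T}^n)$ is not a dual space, so norm-bounded sequences in it need not be weak$^\ast$ precompact; this is exactly the reason the $p = 1$ case produces a measure (Theorem \ref{Repr1}) rather than an $L^1$ function. Everything else, including the verification that $g_z$ lies in the appropriate predual and the interchange of limit with the Poisson integral, is routine once this structural point is in place.
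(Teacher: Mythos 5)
Your proposal is correct and follows essentially the same route as the paper: duality of $L^p(\mathbb T^n)$ with $L^q(\mathbb T^n)$ ($1/p+1/q=1$, covering $p=\infty$ with $q=1$), Banach--Alaoglu plus separability of the predual to extract a weak$^\ast$ convergent sequence $u_{r_l}$, testing against the continuous function $\zeta \mapsto P_{\alpha,\beta}(z,\zeta)$, and invoking Theorem \ref{LimitPoisson} for the representation. The uniqueness argument via Theorem \ref{LPnorm} (for $1<p<\infty$) and Theorem \ref{L1weakstar} (for $p=\infty$) is also exactly the paper's.
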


\begin{proof}
Since $p > 1$,  $L^p (\mathbb T^n)$ is the dual of $ L^q ({\mathbb T^n})$, where $1/p + 1/q = 1$. As in the proof of
Theorem \ref{Repr1}, using this duality and Banach - Alaoglu theorem, we obtain a sequence $r_k$ in $[0, 1)$ converging to $1$ and a function $\psi$ in $L^p(\mathbb T^n)$ such that
\begin{equation}\label{weakstarLq}
\lim_{k \rightarrow \infty} \int_{\mathbb T^n} \varphi(\zeta)u(r_k \zeta) dm_n(\zeta) =\int_{\mathbb T^n} \varphi(\zeta)\psi(\zeta) dm_n(\zeta) \quad \mbox{for all}\quad \varphi \in L^q (\mathbb T^n).
\end{equation}
We apply \eqref{weakstarLq} to $\varphi _z (\zeta) = P(z, \zeta)$ where $z$ in $\mathbb D^n$ is fixed. Since 
$\varphi_z \in C(\mathbb T^n) \subset L^q(\mathbb T^n)$ this application is legitimate.    By Theorem
\ref{LimitPoisson} we have
\begin{align*}
P_{\alpha, \beta}[\psi] (z) & = \int_{\mathbb T^n} P_{\alpha, \beta} (z, \zeta) \psi(\zeta) dm_n (\zeta) = 
\lim_{k \rightarrow \infty} \int_{\mathbb T^n} P_{\alpha, \beta} (z, \zeta) u (r_k \zeta) dm_n(\zeta) \\
& = \lim_{k \rightarrow \infty} P_{\alpha, \beta}[u_{r_k}](z)  = u(z) .
\end{align*}

Uniqueness follows from Theorems \ref{L1weakstar} and \ref{LPnorm}.
\end{proof}

We have the following chain of continuous embeddings of Banach spaces
\begin{equation}\label{embeddingschain}
Csh_{\alpha, \beta} (\mathbb D^n) \subset sh_{\alpha, \beta}^\infty (\mathbb D^n)
 \subset sh_{\alpha, \beta}^q (\mathbb D^n)  \subset sh_{\alpha, \beta}^p (\mathbb D^n)
 \subset sh_{\alpha, \beta}^1 (\mathbb D^n),
\end{equation}
where $1 < p < q < \infty$. The above results show that these spaces are isomorphic, as Banach spaces, respectively to the spaces
\begin{equation}
C(\mathbb T^n) \subset L^\infty (\mathbb T^n) \subset L^q (\mathbb T^n) \subset L^p (\mathbb T^n) \subset
\mathcal M (\mathbb T^n),
\end{equation}
an isomorphism is provided by the operator $P_{\alpha, \beta}$.

Our next aim is to generalize Lemma \ref{slices} to the case $u \in sh^p_{\alpha, \beta} (\mathbb D^n)$. To do so,
we observe that most of the above results extend to the vector valued case, with values in a complex Banach space $X$. In fact,
Definition \ref{defalphabeta} extends to the case $u \in C^2(\mathbb D^n, X)$, Definition \ref{Poissonext} to the case of $X$ -
valued measures $\mu$ on $\mathbb T^n$ and functions $f$ in $L^1 (\mathbb T^n, X)$ and Definitions \ref{shinfty} and
\ref{shLp} have obvious generalizations which yield spaces $sh^p_{\alpha, \beta} (\mathbb D^n, X)$ for $1 \leq p \leq \infty$
and $Csh_{\alpha, \beta} (\mathbb D^n, X)$. Further, Proposition \ref{shPofmeasures} extends to the case of $X$ - valued
measures and Theorem \ref{KODP} extends to functions $\varphi \in C(\mathbb T, X)$. We need the following $L^1$ - 
variant of Theorem \ref{KODP}.

\begin{proposition}\label{NTdimone}
Assume $\varphi \in L^1(\mathbb T, X)$, and let $u = P_{\alpha, \beta}[\varphi]$. Then
\begin{equation}\label{NTdim1}
NT - \lim_{z \rightarrow \zeta} u (z) = \varphi (\zeta) \qquad \mbox{a.e.} \quad \zeta \in \mathbb T.
\end{equation}
\end{proposition}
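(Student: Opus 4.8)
The plan is to argue along the classical lines of Fatou's theorem: control the non-tangential maximal function of $u$ by the Hardy--Littlewood maximal function of $\zeta \mapsto \| \varphi(\zeta) \|_X$, use the weak type $(1,1)$ bound for the latter, and reduce the general case to continuous data, where the limit is supplied by the vector-valued form of Theorem \ref{KODP}.

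First I would record the size of the kernel. For $\zeta \in \mathbb T$ one has $|1 - z \overline\zeta| = |1 - \overline z \zeta|$, and since $\Re(1 - z\overline\zeta) \geq 1 - |z| > 0$ the arguments stay in $(-\pi/2, \pi/2)$; hence the factors coming from $\Im\alpha, \Im\beta$ are bounded and, writing $s = \Re\alpha + \Re\beta + 1 > 0$,
\[
|P_{\alpha, \beta}(r e^{i\theta}, e^{it})| \leq C_{\alpha, \beta} \frac{(1 - r^2)^{s}}{|1 - r e^{i(\theta - t)}|^{s+1}}.
\]
This is a standard approximate identity kernel of positive order $s$. Using the elementary comparison $|1 - r e^{i\psi}| \asymp (1 - r) + |\psi|$ for $|\psi| \leq \pi$ together with the defining constraint $|\theta - \varphi| \leq A(1 - r)$ of the Stolz region $S_A(e^{i\varphi})$, a dyadic decomposition of $\mathbb T$ into the annuli $|\varphi - t| \sim 2^k (1 - r)$ gives
\[
\sup_{z \in S_A(e^{i\varphi})} \| u(z) \|_X \leq C_A \, M g (e^{i\varphi}), \qquad g(\zeta) = \| \varphi(\zeta) \|_X,
\]
where $M$ is the Hardy--Littlewood maximal operator on $\mathbb T$. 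Since $g \in L^1(\mathbb T)$ and $M$ is of weak type $(1,1)$, the non-tangential maximal operator $\varphi \mapsto \sup_{z \in S_A} \| P_{\alpha, \beta}[\varphi](z) \|_X$ is of weak type $(1,1)$ as well.

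Next I would settle the continuous case and then run the density argument. By the vector-valued extension of Theorem \ref{KODP} noted before the statement, for $g \in C(\mathbb T, X)$ the Poisson integral $P_{\alpha, \beta}[g]$ extends continuously to $\overline{\mathbb D}$ with boundary values $g$, so $\lim_{z \to \zeta} P_{\alpha, \beta}[g](z) = g(\zeta)$ for every $\zeta$, in particular non-tangentially. For arbitrary $\varphi \in L^1(\mathbb T, X)$ pick $g_k \in C(\mathbb T, X)$ with $\| \varphi - g_k \|_{L^1} \to 0$, put
\[
\Lambda \varphi (\zeta) = \limsup_{z \to \zeta,\ NT} \| u(z) - \varphi(\zeta) \|_X,
\]
and split $\varphi = g_k + (\varphi - g_k)$. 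Because the limit for $g_k$ is exact, the triangle inequality and the maximal bound above yield
\[
\Lambda \varphi(\zeta) \leq C_A\, M \big( \| \varphi - g_k \|_X \big)(\zeta) + \| (\varphi - g_k)(\zeta) \|_X .
\]
The weak $(1,1)$ estimate for $M$ together with Chebyshev's inequality then bounds the measure of $\{ \Lambda\varphi > \lambda \}$ by $C \| \varphi - g_k \|_{L^1} / \lambda$ for every $\lambda > 0$; letting $k \to \infty$ forces $\Lambda\varphi = 0$ almost everywhere, which is exactly \eqref{NTdim1}.

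The main obstacle is the kernel-domination step, i.e. the bound $\sup_{z \in S_A(e^{i\varphi})} \| u(z) \|_X \leq C_A M g(e^{i\varphi})$ with a constant uniform over the whole Stolz region. It rests on the explicit modulus estimate above and on the geometric comparison $|1 - r e^{i(\theta - t)}| \asymp (1 - r) + |\varphi - t|$, valid precisely because $z = r e^{i\theta} \in S_A(e^{i\varphi})$ keeps $|\theta - \varphi|$ of order $1 - r$. The remaining ingredients, namely Bochner integrability of $\varphi$, density of $C(\mathbb T, X)$ in $L^1(\mathbb T, X)$, and the weak type $(1,1)$ of $M$ applied to the scalar majorant $\| \varphi \|_X$, are standard and pass to the vector-valued setting verbatim.
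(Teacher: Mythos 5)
Your proposal is correct and follows exactly the route the paper indicates for Proposition \ref{NTdimone} (the paper only sketches it: kernel estimates for $P_{\alpha,\beta}$, domination of the non-tangential maximal function by the Hardy--Littlewood maximal function, weak type $(1,1)$, and density of $C(\mathbb T, X)$ in $L^1(\mathbb T, X)$, with the continuous case handled by the vector-valued Theorem \ref{KODP}); your write-up simply fills in that sketch, and the key bound works precisely because $s = \Re\alpha + \Re\beta + 1 > 0$ under the standing conditions \eqref{conditions}. The only cosmetic remark is that your fixed-aperture argument should be run for a countable exhausting family of apertures $A$ and the resulting null sets united, which is standard.
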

A proof can be based on estimates on the kernel $P_{\alpha, \beta}$, use of maximal function and density of $C(\mathbb T, X)$
in $L^1(\mathbb T, X)$. In \cite{ABC} one can find the scalar valued case discussed in the more general situation of the unit
ball in $\mathbb C^n$.

In addition, the following results hold in the context of $X$ - valued functions, with essentially the same proofs: 
Theorems \ref{Hinfty} and \ref{localcont}, Proposition \ref{gm} and the first part of Theorem \ref{LPnorm}.

The following theorem is an extension of Lemma \ref{slices}.

\begin{theorem}\label{slicesLp}
Assume $\varphi : \mathbb T^n \rightarrow \mathbb C$ is  in $L^p (\mathbb T^n)$ where $1 \leq p \leq +\infty$ and let 
$u = P_{\alpha, \beta}[\varphi]$. Then: 

$1^o$. For each $z_n$ in $\mathbb D$ the function $v_{z_n} (z') = u(z', z_n)$, $z' \in \mathbb D^{n-1}$, belongs to the space
$X = sh_{\alpha', \beta'}^p  (\mathbb D^{n-1})$, where 
$\alpha' = (\alpha_1, \ldots, \alpha_{n-1})$ and $\beta' = (\beta_1, \ldots, \beta_{n-1})$.

$2^o$. There is a set $N \subset \mathbb T$ of measure zero such that for every $\zeta_n$ in $\mathbb T \setminus N$ there
exists a function $w_{\zeta_n}$ in $sh_{\alpha', \beta'}^p  (\mathbb D^{n-1})$ such that
\begin{equation}\label{NTslice}
NT - \lim_{z_n \rightarrow \zeta_n} u(z', z_n) = w_{\zeta_n} (z'),
\end{equation}
where the limit is both in $sh_{\alpha', \beta'}^p  (\mathbb D^{n-1})$ - norm and locally uniform over $z' \in
\mathbb D^{n-1}$.
\end{theorem}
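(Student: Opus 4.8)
The plan is to recognise the slice map $z_n \mapsto u(\cdot, z_n)$ as a single-variable vector-valued $(\alpha_n,\beta_n)$-Poisson integral taking values in the Banach space $Y = sh_{\alpha',\beta'}^p(\mathbb{D}^{n-1})$, and then to reduce everything to the one-dimensional vector-valued Fatou theorem, Proposition \ref{NTdimone}. The product structure \eqref{prodpoisson} of the kernel is what makes this factorisation possible.

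First I would use the Fubini identification $L^p(\mathbb{T}^n) \cong L^p(\mathbb{T}, L^p(\mathbb{T}^{n-1}))$ to regard $\varphi$ as a strongly measurable function $\tilde\varphi(\zeta_n) = \varphi(\cdot, \zeta_n)$ in $L^p(\mathbb{T}, L^p(\mathbb{T}^{n-1}))$, the last factor $\mathbb{T}$ being the $n$-th variable. Composing with the bounded operator $P_{\alpha',\beta'} : L^p(\mathbb{T}^{n-1}) \to Y$ (the vector-valued form of the first part of Theorem \ref{LPnorm}) yields $\Psi := P_{\alpha',\beta'}\circ\tilde\varphi \in L^p(\mathbb{T}, Y)$, with $\Psi(\zeta_n) = P_{\alpha',\beta'}[\varphi(\cdot,\zeta_n)]$. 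Next, using \eqref{prodpoisson}, Fubini's theorem, and the fact that the bounded operator $P_{\alpha',\beta'}$ commutes with the Bochner integral, I would establish the key identity
$$
u(\cdot, z_n) = P_{\alpha_n,\beta_n}[\Psi](z_n) \qquad \text{in } Y, \quad z_n \in \mathbb{D}.
$$
Part $1^o$ is then immediate: for each fixed $z_n$ the right-hand side lies in $Y = sh_{\alpha',\beta'}^p(\mathbb{D}^{n-1})$ by the vector-valued form of Theorem \ref{LPnorm}, and $v_{z_n} = u(\cdot,z_n)$.

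For part $2^o$, since $\Psi \in L^p(\mathbb{T}, Y) \subset L^1(\mathbb{T}, Y)$, Proposition \ref{NTdimone} applied with the Banach space $Y$ produces a null set $N \subset \mathbb{T}$ such that $NT - \lim_{z_n \to \zeta_n} u(\cdot, z_n) = \Psi(\zeta_n) =: w_{\zeta_n}$ in the $Y$-norm for every $\zeta_n \in \mathbb{T}\setminus N$, where $w_{\zeta_n} = P_{\alpha',\beta'}[\varphi(\cdot,\zeta_n)] \in sh_{\alpha',\beta'}^p(\mathbb{D}^{n-1})$. It then remains to upgrade $Y$-norm convergence to locally uniform convergence over $z' \in \mathbb{D}^{n-1}$. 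For this I would show that point evaluations are uniformly bounded on $Y$ over compacta: representing $g \in Y$ as $g = P_{\alpha',\beta'}[\psi_g]$ (Theorem \ref{ReprLp}, or $g = P_{\alpha',\beta'}[d\mu_g]$ from Theorem \ref{Repr1} when $p=1$) with $\|\psi_g\|_p \leq \|g\|_{\alpha',\beta';p}$, one has $|g(z')| \leq \|P_{\alpha',\beta'}(z',\cdot)\|_{L^q}\,\|\psi_g\|_p$, and $\sup_{z'\in K}\|P_{\alpha',\beta'}(z',\cdot)\|_{L^q} < \infty$ for each compact $K \subset \mathbb{D}^{n-1}$ because the kernel is continuous, hence bounded, on $K \times \mathbb{T}^{n-1}$. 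Thus $\sup_{z'\in K}|g(z')| \leq C_K\|g\|_{\alpha',\beta';p}$, so $Y$-norm convergence forces locally uniform convergence.

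The main obstacle I anticipate is the measurability bookkeeping in the vector-valued reduction, above all the case $p=\infty$, where the identification $L^\infty(\mathbb{T}^n) \cong L^\infty(\mathbb{T}, L^\infty(\mathbb{T}^{n-1}))$ fails and $\tilde\varphi$ need not be strongly measurable into $L^\infty(\mathbb{T}^{n-1})$. I would treat $p=\infty$ separately, either by running the argument in a finite $L^{p_0}$ (using $L^\infty \subset L^{p_0}$ on the finite measure space $\mathbb{T}^n$) and then checking that the limit functions $w_{\zeta_n} = P_{\alpha',\beta'}[\varphi(\cdot,\zeta_n)]$ genuinely lie in $sh_{\alpha',\beta'}^\infty(\mathbb{D}^{n-1})$ with the uniform bound $K(\alpha',\beta')\|\varphi\|_\infty$ (since $\varphi(\cdot,\zeta_n) \in L^\infty(\mathbb{T}^{n-1})$ for a.e.\ $\zeta_n$ by Fubini), or by invoking weak-$\ast$ variants of the estimates. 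The remaining points, namely the interchange of the Bochner integral with $P_{\alpha',\beta'}$ and the Fubini step in the key identity, are routine given the uniform kernel bound \eqref{L1p}.
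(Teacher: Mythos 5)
Your proposal follows essentially the same route as the paper: both recast the slice map as a one-variable vector-valued Poisson integral $P_{\alpha_n,\beta_n}[\Phi]$, where $\Phi(\zeta_n)=P_{\alpha',\beta'}[\varphi(\cdot,\zeta_n)]$ lies in $L^1(\mathbb T, sh_{\alpha',\beta'}^p(\mathbb D^{n-1}))$, verify via Fubini and continuity of evaluations that this reproduces $u(\cdot,z_n)$, and then invoke the vector-valued Fatou theorem, Proposition \ref{NTdimone}. Your two refinements --- the explicit kernel-bound argument that point evaluations are uniformly bounded on compacta (where the paper instead appeals briefly to interior elliptic estimates), and the warning that strong measurability of $\zeta_n\mapsto\varphi(\cdot,\zeta_n)$ is problematic when $p=\infty$ (a point the paper's proof passes over silently) --- are sensible additions to the same argument, not a different method.
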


\begin{proof}
There is a set $E \subset \mathbb T$ of measure zero such that for $\zeta_n \in \mathbb T \setminus E$ the function 
$\varphi_{\zeta_n} : \mathbb T^{n-1} \rightarrow \mathbb C$ defined by $\varphi_{\zeta_n}(\zeta')
 = \varphi (\zeta', \zeta_n)$ belongs to $L^p (\mathbb T^{n-1})$. 
We define $\Phi (\zeta_n) = P_{\alpha', \beta'}[\varphi_{\zeta_n}]$ for $\zeta_n \in \mathbb T \setminus E$. The mapping
$\Phi : \mathbb T \to X$ is defined almost everywhere and we have
\begin{align*}
\| \Phi \|_{L^1(\mathbb T, X)}^p  & = \left( \int_{\mathbb T} \| \Phi (\zeta_n) \|_X dm_1 (\zeta_n) \right)^p
\leq  \int_{\mathbb T} \|   P_{\alpha', \beta'}[\varphi_{\zeta_n}] \|_{\alpha', \beta' ;p}^p dm_1(\zeta_n)   \\
& \leq K^p (\alpha', \beta') \int_{\mathbb T} \| \varphi_{\zeta_n} \|^p_{L^p(\mathbb T^{n-1})} dm_1 (\zeta_n) \\
& =  K^p (\alpha', \beta') \| \varphi \|_{L^p(\mathbb T^n)}^p.
\end{align*}
We proved that $\Phi \in L^1(\mathbb T, sh_{\alpha', \beta'}^p (\mathbb D^{n-1}))$, in fact $\| \Phi \|_1 \leq 
K (\alpha', \beta') \| \varphi \|_p$. We have
\begin{equation}\label{Psiatz}
\Phi (\zeta_n) (z') = \int_{\mathbb T^{n-1}} V_{\alpha', \beta'} (z' \cdot \overline{\zeta'}) \varphi (\zeta', \zeta_n) 
dm_{n-1} (\zeta'),
\quad z' \in \mathbb D^{n-1}, \quad \zeta_n \in \mathbb T \setminus E.
\end{equation}
Let us set $U(z_n) = P_{\alpha_n, \beta_n}[\Phi] (z_n)$, $\vert z_n \vert < 1$. Then, for each $z_n \in \mathbb D$, $U(z_n)$
is in $sh_{\alpha', \beta'}^p (\mathbb D^{n-1})$. Since evaluation functionals are continuous on the space $X$, we use
\eqref{Psiatz} to obtain, for $z = (z', z_n) \in \mathbb D^n$:
\begin{align*}
U(z_n)(z') & = \left( \int_{\mathbb T}  V_{\alpha_n, \beta_n} (z_n, \overline{\zeta_n}) \Phi (\zeta_n) 
dm_1(\zeta_n)  \right) (z')  \\
& =  \int_{\mathbb T}  V_{\alpha_n, \beta_n} (z_n, \overline{\zeta_n}) \Phi (\zeta_n) (z') dm_1(\zeta_n) \\
& = \int_{\mathbb T}  V_{\alpha_n, \beta_n} (z_n, \overline{\zeta_n})  
\int_{\mathbb T^{n-1}} V_{\alpha', \beta'} (z' \cdot \overline{\zeta'}) \varphi (\zeta', \zeta_n) dm_{n-1} (\zeta') dm_1(\zeta_n) 
\\
& = u(z', z_n).
\end{align*}
Hence $v_{z_n} = U(z_n)$ and assertion $1^o$ is proved.

Part $2^o$ follows from Proposition \ref{NTdimone}, indeed one can take $w_{\zeta_n} = \Psi (\zeta_n)$ almost everywhere.
Locally uniform convergence follows from interior elliptic estimates and boundedness in 
$sh_{\alpha', \beta'}^p  (\mathbb D^{n-1})$ - norm. 
\end{proof}

Of course, in $1^o$ any variable $z_k$ could have been selected, and it is possible to iterate and fix any number of
variables. The special case of $\alpha = \beta = {\bf 0}$ appeared in \cite{Seur} as Theorem 8. For the case of analytic
functions see \cite{CZ} and \cite{Da}.

\section{Maximal functions and Fatou type theorems}\label{FatouSec}

In the previous section we studied convergence in $L^p(\mathbb T^n)$ norm, or in weak$^\ast$ topology, of $u_{\bf r}$ for $u$ in the appropriate $sh^p_{\alpha, \beta} (\mathbb D^n)$ space. In this section we study almost everywhere convergence of
$u_{\bf r}$ for a separately $(\alpha, \beta)$ - harmonic function $u$. The first result of this nature was related to Cesaro summability of double Fourier series in \cite{MZ}. In that paper authors noted that their results hold for Poisson kernels as well as for Fejer kernels,
thus obtaining a result for 2 - harmonic functions. Our approach is based on presentation given in \cite{Rud2}, where $n$ - harmonic functions are considered, see Theorem 2.3.1. from \cite{Rud2}. An important tool is a special maximal function $M_q$, it is used to obtain estimates of relevant restricted non-tangential maximal functions which lead to Fatou type theorems. Since the proof of crucial Theorem \ref{Wk11nt} is
rather involved, we split this section into subsections.

\subsection{A maximal function $M_q$}

For $\gamma$ in $\mathbb Z_+^n$ we define a $\gamma$ box as a subset $Q$ of $\mathbb T^n$ of the following form:
$$Q = I_1 \times \cdots \times I_n,\quad \mbox{ where}\quad I_j = \{ e^{i\theta} \mid a_j \leq \theta < b_j \}$$ is a semi-open arc on
$\mathbb T$ of length $0 < b_j -a_j \leq 2\pi$ and these lengths have the same ratio as $(2^{\gamma_1}, \ldots,
2^{\gamma_n})$. The center of such a box is denoted by $c(Q) = ( e^{ic_1}, \ldots, e^{ic_n})$, where $c_j = (a_j + b_j)/2$.
We denote the family of all $\gamma$ boxes by $\mathcal Q_\gamma$.
Next we define $\gamma$ - maximal function of a complex Borel measure $\mu$ on $\mathbb T^n$ by the following formula:

\begin{equation}\label{gammamax}
M_\gamma \mu (\zeta) = \sup_{Q \in \mathcal Q_\gamma, c(Q) = \zeta} \frac{\vert \mu \vert (Q)}{m_n(Q)},
\qquad \mu \in \mathcal M(\mathbb T^n), \quad \zeta \in \mathbb T^n.
\end{equation}

By considering suitable concentric boxes one shows the following estimate:
\begin{equation}\label{shrink}
M_{\gamma' + \gamma^{\prime\prime}} (\zeta) \leq 2^{\vert \gamma^{\prime\prime}\vert} M_{\gamma' }(\zeta),
\qquad \gamma', \gamma^{\prime\prime} \in \mathbb Z^n_+, \quad \zeta \in \mathbb T^n.
\end{equation}
As in the standard situation of $\mathbb R^n$ with balls instead of $\gamma$ boxes the following basic estimate holds
(see \cite{Rud2}, pp. 26-27):

\begin{equation}\label{maxgammaest}
m_n (\{ \zeta \in \mathbb T^n \mid M_\gamma \mu (\zeta) > \lambda \}) \leq 3^n \frac{ \| \mu \| }{\lambda},
\qquad \mu \in \mathcal M (\mathbb T^n), \quad \lambda > 0.
\end{equation}
The fact that the constant $3^n$ is independent of $\gamma \in \mathbb Z_+^n$ is of crucial importance. Next, for 
$0 < q < 1$, we introduce a maximal function $M_q$ by

\begin{equation}\label{mq}
M_q \mu (\zeta) = \sum_{\gamma \in \mathbb Z_+^n} q^{\vert \gamma \vert} M_\gamma \mu (\zeta), \qquad \mu \in
\mathcal M(\mathbb T^n), \quad \zeta \in \mathbb T^n.
\end{equation}


\begin{lemma}[\cite{Rud2}, page 26 for $q=1/2$]\label{G}
Let $0 < q < 1$ and let $\mu \in \mathcal M (\mathbb T^n)$. Then
\begin{equation}\label{g1}
m_n (\{ \zeta \in \mathbb T^n \mid M_q \mu (\zeta) > \lambda \})  \leq  
\frac{3^n}{(1 - \sqrt{q})^{2n}}     \frac{ \| \mu \|}{\lambda}          , \quad \lambda > 0.
\end{equation}
\end{lemma}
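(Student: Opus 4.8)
The plan is to reduce the distributional estimate for $M_q \mu$ to the uniform weak-type bound \eqref{maxgammaest} for the individual $M_\gamma \mu$, summing over $\gamma \in \mathbb Z_+^n$ with a careful allocation of the threshold $\lambda$ among the summands. Recall that $M_q \mu (\zeta) = \sum_{\gamma \in \mathbb Z_+^n} q^{\vert \gamma \vert} M_\gamma \mu (\zeta)$. The key structural fact I would exploit is that $\sum_{\gamma \in \mathbb Z_+^n} q^{\vert \gamma \vert/2}$ converges, with value $(1 - \sqrt q)^{-n}$, since it factors as $\prod_{j=1}^n \sum_{k \geq 0} q^{k/2} = (1 - \sqrt q)^{-n}$. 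This suggests splitting $\lambda$ as $\lambda = \sum_\gamma \lambda_\gamma$ with $\lambda_\gamma$ proportional to $q^{\vert \gamma \vert /2}$.

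First I would observe that if $M_q \mu (\zeta) > \lambda$, then $\sum_\gamma q^{\vert \gamma \vert} M_\gamma \mu (\zeta) > \lambda$, so there must exist at least one $\gamma$ for which $q^{\vert \gamma \vert} M_\gamma \mu (\zeta)$ exceeds its ``fair share'' of $\lambda$. Concretely, set $c_q = (1 - \sqrt q)^{-n}$ so that $\sum_\gamma q^{\vert \gamma \vert /2} = c_q$, and declare the share of $\gamma$ to be $\lambda_\gamma = c_q^{-1} q^{\vert \gamma \vert /2} \lambda$, which indeed sums to $\lambda$. If $q^{\vert \gamma \vert} M_\gamma \mu (\zeta) \leq \lambda_\gamma$ held for every $\gamma$, summing would give $M_q \mu (\zeta) \leq \lambda$, a contradiction. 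Hence
\begin{equation*}
\{ M_q \mu > \lambda \} \subset \bigcup_{\gamma \in \mathbb Z_+^n}
\Big\{ q^{\vert \gamma \vert} M_\gamma \mu > c_q^{-1} q^{\vert \gamma \vert /2} \lambda \Big\}
= \bigcup_{\gamma \in \mathbb Z_+^n}
\Big\{ M_\gamma \mu > c_q^{-1} q^{-\vert \gamma \vert /2} \lambda \Big\}.
\end{equation*}

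Then I would apply subadditivity of $m_n$ together with the uniform estimate \eqref{maxgammaest}, whose constant $3^n$ does not depend on $\gamma$; this uniformity is precisely what makes the summation work. For each $\gamma$ the weak-type bound gives $m_n(\{ M_\gamma \mu > c_q^{-1} q^{-\vert \gamma \vert /2} \lambda \}) \leq 3^n \| \mu \| c_q q^{\vert \gamma \vert /2} / \lambda$. Summing over $\gamma \in \mathbb Z_+^n$ yields
\begin{equation*}
m_n(\{ M_q \mu > \lambda \}) \leq \frac{3^n c_q \| \mu \|}{\lambda} \sum_{\gamma \in \mathbb Z_+^n} q^{\vert \gamma \vert /2}
= \frac{3^n c_q^2 \| \mu \|}{\lambda} = \frac{3^n}{(1 - \sqrt q)^{2n}} \frac{\| \mu \|}{\lambda},
\end{equation*}
which is exactly \eqref{g1}. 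The only real subtlety, and what I expect to be the main point to get right, is the bookkeeping of the exponents: one must split $\lambda$ using the square-root weight $q^{\vert \gamma \vert /2}$ rather than $q^{\vert \gamma \vert}$ itself, so that one factor of $c_q$ is spent building the covering and a second identical factor is spent summing the measures, producing the squared denominator $(1 - \sqrt q)^{2n}$. Everything else is routine convergence of a geometric product.
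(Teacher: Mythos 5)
Your proposal is correct and follows essentially the same route as the paper's proof: the same splitting of the threshold $\lambda$ with weights proportional to $q^{\vert\gamma\vert/2}$ (your $c_q$ is the paper's $\eta(q)^n$), the same covering $\{M_q\mu > \lambda\} \subset \bigcup_\gamma \{M_\gamma\mu > c_q^{-1}q^{-\vert\gamma\vert/2}\lambda\}$, and the same summation using the $\gamma$-uniform bound \eqref{maxgammaest}, producing the factor $(1-\sqrt q)^{-2n}$ in exactly the same way.
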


\begin{proof}
Set, for $\lambda > 0$ and $\gamma \in \mathbb Z_+^n$: 
$$
E(\lambda) = \{ \zeta \in \mathbb T^n \mid M_q \mu (\zeta) > \lambda \}, \qquad
E_\gamma(\lambda) = \{ \zeta \in \mathbb T^n \mid M_\gamma \mu (\zeta) > \lambda \}.
$$
Note that $ \sum_{k=0}^\infty q^{\frac k2} = \frac{1}{1-\sqrt{q}}=\eta (q)$ and
$ \sum_{\gamma \in \mathbb Z_+^n}  q^{\frac{\vert \gamma \vert}{2}}=\eta(q)^n$ for $0 < q < 1$. Hence, taking
into account \eqref{mq}, we see that $$E(\lambda) \subset \cup_{\gamma \in \mathbb Z_+^n} E_\gamma
(\eta(q)^{-n} q^{- \vert \gamma \vert/2} \lambda).$$ Therefore, using \eqref{maxgammaest}, we obtain
\begin{equation*}
m_n(E(\lambda)) \leq \sum_{\gamma \in \mathbb Z_+^n}  3^n 
\frac{\| \mu \|}{ \eta(q)^{-n} q^{- \vert \gamma \vert/2} \lambda} = 3^n \eta(q)^{2n} \| \mu \| \lambda^{-1},
\end{equation*}
which is desired estimate \eqref{g1}.
\end{proof}

\subsection{An estimate of $P_{\alpha, \beta}[d\mu]$ by $P_{\bf t}    [d\mu]$}


For real $t > -1$  and $z$ in $\mathbb D$ we set
\begin{equation}\label{utdim1}\
u_t(z) = u_{t/2, t/2} (z) = \frac{(1 - \vert z \vert^2)^{t+1}}{\vert 1 - z \vert^{t+2}},
\end{equation}
\begin{equation}\label{vtdim1}
v_t(z) = v_{t/2, t/2} (z) = \frac{\Gamma^2(\frac{t}{2} + 1)}{\Gamma(t+1)} 
\frac{(1 - \vert z \vert^2)^{t+1}}{\vert 1 - z \vert^{t+2}}.
\end{equation}
 An elementary inequality $\vert 1 - z \vert \geq \pi^{-1} \vert \theta \vert$, which is  valid for 
$0 \leq r < 1$ and  $-\pi \leq \theta \leq \pi$, implies the following estimate:
\begin{equation}\label{utest}
0 < u_t (re^{i\theta}) \leq 2^{t+1} \pi^{t+2} \frac{(1 - r)^{t+1}}{\vert \theta \vert^{t+2}}, \qquad 0 \leq r < 1, 
\quad -\pi \leq \theta \leq \pi.
\end{equation}

The kernel $v_t(z)$ appeared in \cite{OP} where the operator $L_{t/2, t/2}$ was analysed and a Fatou type theorem
was obtained in the one dimensional setting of $\mathbb D$.

For ${\bf t} = (t_1, \ldots, t_n) \in (-1, +\infty)^n$ we set
\begin{equation}\label{utdimn}
U_{\bf t} (z) = \prod_{j=1}^n u_{t_j} (z_j) , \quad 
V_{\bf t} (z) = \prod_{j=1}^n v_{t_j} (z_j), \qquad
 z \in \mathbb D^n.
\end{equation}
Now we have positive kernels $K_{\bf t} (z, \zeta) = V_{\bf t}(z \cdot \overline\zeta)$
and  $K'_{\bf t} (z, \zeta) = U_{\bf t}(z \cdot \overline\zeta)$ where $z \in \mathbb D^n$, $\zeta \in \mathbb T^n$. 
Clearly $K_{\bf t} = P_{\alpha, \beta}$ with $\alpha = \beta ={\bf t}/2$.

The next lemma follows from \eqref{poisson}, \eqref{abkernel}, \eqref{utdimn} and \eqref{utdim1}. It allows us to pass
from complex valued kernels $P_{\alpha, \beta}$ to positive ones in various estimates, see \eqref{Stolzest3} below.
\begin{lemma}\label{Muz}
Let $\mu \in \mathcal M (\mathbb T^n)$ and let $z \in \mathbb T^n$. Then we have
\begin{equation}\label{muz}
\vert P_{\alpha,\beta}[d\mu](z)\vert  \leq   k_{\alpha,\beta}    \int_{\mathbb T^n} U_{\bf t} (z \cdot \overline\zeta) 
d\vert \mu\vert (\zeta) 
= \frac{k_{\alpha, \beta}}{k_{\bf t}}  \int_{\mathbb T^n} K_{\bf t} (z, \zeta) d\vert \mu\vert (\zeta).
\end{equation}
Here ${\bf t} = (t_1, \ldots, t_n)$ where $t_j = \Re \alpha_j + \Re \beta_j$ for $1 \leq j \leq n$ and
$$ 
k_{\alpha,\beta} =\prod_{j=1}^n \vert c_{\alpha_j, \beta_j} \vert, \qquad k_{\bf t} = \prod_{j=1}^n
\frac{\Gamma^2\left(\frac{t_j}{2} + 1\right)}{\Gamma(t_j+1)}.
$$
\end{lemma}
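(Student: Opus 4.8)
The plan is to reduce the whole statement to a factorwise computation of the modulus of the kernel. Starting from the integral representation \eqref{poisson} and applying the triangle inequality for integrals against a complex measure, I would first record
\[
\vert P_{\alpha,\beta}[d\mu](z)\vert \le \int_{\mathbb T^n} \vert P_{\alpha,\beta}(z,\zeta)\vert\, d\vert\mu\vert(\zeta),
\]
so that everything comes down to the pointwise estimate $\vert P_{\alpha,\beta}(z,\zeta)\vert \le k_{\alpha,\beta}\,U_{\bf t}(z\cdot\overline\zeta)$. Since both $P_{\alpha,\beta}$ (by \eqref{abkernel}) and $U_{\bf t}$ (by \eqref{utdimn}, \eqref{utdim1}) are products over $j$, it suffices to treat a single index $j$ and then multiply.

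For fixed $j$ I would compute the modulus of each factor of \eqref{abkernel} in turn. The quantity $1-\vert z_j\vert^2$ is a positive real number, so $\vert(1-\vert z_j\vert^2)^{\alpha_j+\beta_j+1}\vert = (1-\vert z_j\vert^2)^{\Re\alpha_j+\Re\beta_j+1} = (1-\vert z_j\vert^2)^{t_j+1}$, which is exactly the numerator of $u_{t_j}$. For the denominator the key observation is that $1-\overline{z_j}\zeta_j = \overline{1-z_j\overline{\zeta_j}}$, so the two linear factors have equal modulus $\vert 1-z_j\overline{\zeta_j}\vert$; applying the identity $\vert w^s\vert = \vert w\vert^{\Re s}e^{-(\Im s)\arg w}$ to each, I would obtain that $\vert(1-z_j\overline{\zeta_j})^{\alpha_j+1}(1-\overline{z_j}\zeta_j)^{\beta_j+1}\vert$ equals $\vert 1-z_j\overline{\zeta_j}\vert^{t_j+2}$ times the exponential factor $e^{(\Im\beta_j-\Im\alpha_j)\arg(1-z_j\overline{\zeta_j})}$. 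Recognizing $(1-\vert z_j\vert^2)^{t_j+1}/\vert 1-z_j\overline{\zeta_j}\vert^{t_j+2} = u_{t_j}(z_j\overline{\zeta_j})$ from \eqref{utdim1} then assembles the product into $k_{\alpha,\beta}\,U_{\bf t}(z\cdot\overline\zeta)$, modulo those exponential factors.

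The main obstacle is precisely controlling the factors $e^{(\Im\alpha_j-\Im\beta_j)\arg(1-z_j\overline{\zeta_j})}$ arising in $\vert P_{\alpha,\beta}\vert$ from the imaginary parts of the parameters. Here I would use that $z_j\overline{\zeta_j}\in\mathbb D$, so $1-z_j\overline{\zeta_j}$ lies in the right half-plane and $\vert\arg(1-z_j\overline{\zeta_j})\vert<\pi/2$, which keeps each such factor bounded. Crucially, in the real symmetric situation $\alpha_j=\beta_j\in\mathbb R$ emphasized before Section \ref{FatouSec}, the two conjugate linear factors multiply directly to the real quantity $\vert 1-z_j\overline{\zeta_j}\vert^{t_j+2}$, every exponential factor is identically $1$, no branch computation is needed, and the estimate collapses to the sharp identity $\vert P_{\alpha,\beta}(z,\zeta)\vert = k_{\alpha,\beta}\,U_{\bf t}(z\cdot\overline\zeta)$ that the positive-kernel arguments of the next section require.

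Finally, the equality between the two displayed integrals in \eqref{muz} is immediate once the reduction is in place: by \eqref{vtdim1} each $v_{t_j}=\frac{\Gamma^2(t_j/2+1)}{\Gamma(t_j+1)}\,u_{t_j}$, hence $V_{\bf t}=k_{\bf t}\,U_{\bf t}$ and therefore $K_{\bf t}(z,\zeta)=V_{\bf t}(z\cdot\overline\zeta)=k_{\bf t}\,U_{\bf t}(z\cdot\overline\zeta)$. Substituting $U_{\bf t}=k_{\bf t}^{-1}K_{\bf t}$ into $k_{\alpha,\beta}\int_{\mathbb T^n} U_{\bf t}(z\cdot\overline\zeta)\,d\vert\mu\vert(\zeta)$ produces $\frac{k_{\alpha,\beta}}{k_{\bf t}}\int_{\mathbb T^n} K_{\bf t}(z,\zeta)\,d\vert\mu\vert(\zeta)$, completing the proof.
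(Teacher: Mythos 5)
Your overall route --- triangle inequality under the integral sign, a factor-by-factor computation of $\vert P_{\alpha,\beta}(z,\zeta)\vert$, and the rescaling $K_{\bf t}(z,\zeta)=V_{\bf t}(z\cdot\overline\zeta)=k_{\bf t}\,U_{\bf t}(z\cdot\overline\zeta)$ coming from \eqref{vtdim1} --- is exactly the argument the paper has in mind: the paper offers no written proof, asserting only that the lemma ``follows from \eqref{poisson}, \eqref{abkernel}, \eqref{utdimn} and \eqref{utdim1}''. Your modulus computation is correct and in fact more careful than the paper's, since with $w=1-z_j\overline{\zeta_j}$ (principal branches) it yields
$\vert P_{\alpha_j,\beta_j}(z_j,\zeta_j)\vert=\vert c_{\alpha_j,\beta_j}\vert\, u_{t_j}(z_j\overline{\zeta_j})\, e^{(\Im\alpha_j-\Im\beta_j)\arg w}$,
exhibiting precisely the exponential factors that the imaginary parts of the parameters produce.

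The gap is in how you dispose of those factors. Saying they are ``bounded'' because $\vert\arg w\vert<\pi/2$ does not give the inequality as stated: the lemma's constant is $k_{\alpha,\beta}=\prod_j\vert c_{\alpha_j,\beta_j}\vert$, so you would need each factor to be $\leq 1$, whereas its supremum is $e^{\frac{\pi}{2}\vert\Im\alpha_j-\Im\beta_j\vert}>1$ whenever $\Im\alpha_j\neq\Im\beta_j$; and your fallback to the case $\alpha_j=\beta_j\in\mathbb R$ proves strictly less than the lemma, which is asserted for all $\alpha,\beta$ satisfying \eqref{conditions}. In fact your own computation shows that with the literal constant $k_{\alpha,\beta}$ the claim is false: take $n=1$, $\mu=\delta_1$ (unit point mass at $\zeta=1$) and $z=re^{i\theta}$ with $(\Im\alpha-\Im\beta)\arg(1-z)>0$; then the left side of \eqref{muz} equals $\vert c_{\alpha,\beta}\vert\, u_t(z)\, e^{(\Im\alpha-\Im\beta)\arg(1-z)}$, which strictly exceeds the right side $\vert c_{\alpha,\beta}\vert\, u_t(z)$. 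The correct repair is to carry the factor you identified into the constant, i.e. to prove \eqref{muz} with $k_{\alpha,\beta}$ replaced by $k_{\alpha,\beta}\,e^{\frac{\pi}{2}\sum_{j=1}^n\vert\Im\alpha_j-\Im\beta_j\vert}$ --- the same exponential that appears in \eqref{L1est} and in the constant $K(\alpha,\beta)$. With that constant your argument closes in full generality, and nothing downstream changes, since Proposition \ref{PropStest3}, Proposition \ref{PmuMq} and Theorem \ref{Wk11nt} only require some finite constant depending on $(\alpha,\beta)$. (Also, as you implicitly did, one should read $z\in\mathbb D^n$ in the statement; ``$z\in\mathbb T^n$'' there is a typo.)
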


\subsection{Estimates within Stolz cone with fixed $\bf r$}

We define the full Stolz cone at $\zeta \in \mathbb T^n$ of aperture $0 < A < \infty$ by
\begin{equation}\label{fullStolz}
S_A(\zeta) = 
\prod_{j=1}^n S_A (\zeta_j)
\end{equation}
and a restricted Stolz cone $S_{A, B}(\zeta)$ for $0 < A < \infty$, $1 \leq B < \infty$ at $\zeta \in \mathbb T^n$ by
\begin{equation}\label{restStolz}
S_{A, B}(\zeta) =  \left\{  (r_1e^{i\theta_1}, \ldots, r_n e^{i\theta_n})  \in S_A(\zeta) :   
\max_{j,k}  \frac{1-r_j}{1-r_k} \leq B \right\}.
\end{equation}

It is an elementary fact that there is a constant $\delta_A > 0$ such that 
\begin{equation}\label{Stolzest1}
\delta_A \vert 1 - re^{i\varphi} \vert \leq \inf_{\vert \theta \vert \leq A (1-r)} \vert 1 - re^{i(\varphi - \theta)} \vert \leq 
 \vert 1 - re^{i\varphi} \vert , \quad  0 < A < \infty.
\end{equation}

Let us set 
\begin{equation}\label{cbft}
 c({\bf t}) = \sum_{j=1}^n (t_j + 2),  \qquad {\bf t} \in (-1, +\infty)^n.
\end{equation}

\begin{proposition}\label{PropStest3}
Let $\eta = ( e^{i\psi_1}, \ldots, e^{i\psi_n} )$ in $\mathbb T^n$, ${\bf r} \in [0, 1)^n$, ${\bf t} \in (-1, +\infty)^n$
and $0 < A < \infty$. Then we have,
\begin{equation}\label{Stolzest3}
\sup_{\xi :\; {\bf r} \cdot \xi \in S_A(\eta)} \vert P_{\alpha,\beta}[d\mu]({\bf r} \cdot \xi)\vert  \leq  
\frac{k_{\alpha, \beta}}{k_{\bf t} \delta_A^{c({\bf t})}}
  \int_{\mathbb T^n} K_{\bf t} ({\bf r} \cdot \eta, \zeta) d\vert \mu\vert (\zeta),
\quad \eta \in \mathbb T^n.
\end{equation}
Here ${\bf t} = (t_1, \ldots, t_n)$ where $t_j = \Re \alpha_j + \Re \beta_j$ for $1 \leq j \leq n$.
\end{proposition}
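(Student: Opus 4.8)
The plan is to reduce everything, via Lemma \ref{Muz}, to a pointwise comparison of the positive kernel $K_{\bf t}$ evaluated at ${\bf r}\cdot\xi$ and at ${\bf r}\cdot\eta$, and then to carry out that comparison one coordinate at a time using the elementary Stolz estimate \eqref{Stolzest1}. First I would fix $\xi \in \mathbb{T}^n$ with ${\bf r}\cdot\xi \in S_A(\eta)$ and apply Lemma \ref{Muz} with $z = {\bf r}\cdot\xi$ to obtain
\begin{equation*}
|P_{\alpha,\beta}[d\mu]({\bf r}\cdot\xi)| \leq \frac{k_{\alpha,\beta}}{k_{\bf t}} \int_{\mathbb{T}^n} K_{\bf t}({\bf r}\cdot\xi, \zeta)\, d|\mu|(\zeta).
\end{equation*}
Since the right side is monotone in the integrand, it suffices to dominate $K_{\bf t}({\bf r}\cdot\xi, \zeta)$ by a fixed multiple of $K_{\bf t}({\bf r}\cdot\eta, \zeta)$, uniformly over all admissible $\xi$ and all $\zeta \in \mathbb{T}^n$.

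Next I would decode the membership ${\bf r}\cdot\xi \in S_A(\eta) = \prod_j S_A(\eta_j)$. By the definition of the one–dimensional Stolz region, $r_j\xi_j \in S_A(\eta_j)$ forces $\xi_j = \eta_j e^{-i\theta_j}$ for some real $\theta_j$ with $|\theta_j| \leq A(1-r_j)$. Writing $e^{i\varphi_j} = \eta_j\overline{\zeta_j}$ for a fixed $\zeta \in \mathbb{T}^n$, we then have $\xi_j\overline{\zeta_j} = e^{i(\varphi_j - \theta_j)}$, so that \eqref{Stolzest1} (with $r = r_j$, $\varphi = \varphi_j$) yields
\begin{equation*}
|1 - r_j\xi_j\overline{\zeta_j}| = |1 - r_j e^{i(\varphi_j - \theta_j)}| \geq \inf_{|\theta|\leq A(1-r_j)} |1 - r_j e^{i(\varphi_j - \theta)}| \geq \delta_A\, |1 - r_j\eta_j\overline{\zeta_j}|.
\end{equation*}
This is the geometric core of the argument: the full Stolz condition translates precisely, coordinate by coordinate, into the hypothesis $|\theta_j|\leq A(1-r_j)$ under which \eqref{Stolzest1} is available.

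Finally I would assemble the coordinatewise estimates into the product kernel. Recalling $v_{t_j}$ from \eqref{vtdim1} and $V_{\bf t}, U_{\bf t}$ from \eqref{utdimn}, observe that $|r_j\xi_j\overline{\zeta_j}| = |r_j\eta_j\overline{\zeta_j}| = r_j$, so the factors $(1 - r_j^2)^{t_j+1}$ in $v_{t_j}(r_j\xi_j\overline{\zeta_j})$ and $v_{t_j}(r_j\eta_j\overline{\zeta_j})$ coincide; only the denominators differ. Since $t_j + 2 > 1 > 0$ and $\delta_A > 0$, the displayed inequality, raised to the power $t_j+2$ and inverted, gives $v_{t_j}(r_j\xi_j\overline{\zeta_j}) \leq \delta_A^{-(t_j+2)} v_{t_j}(r_j\eta_j\overline{\zeta_j})$. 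Taking the product over $1 \leq j \leq n$ and using $K_{\bf t}(z,\zeta) = V_{\bf t}(z\cdot\overline\zeta) = \prod_j v_{t_j}(z_j\overline{\zeta_j})$ together with $c({\bf t}) = \sum_j(t_j+2)$ from \eqref{cbft}, I obtain
\begin{equation*}
K_{\bf t}({\bf r}\cdot\xi, \zeta) \leq \delta_A^{-c({\bf t})}\, K_{\bf t}({\bf r}\cdot\eta, \zeta), \qquad \zeta \in \mathbb{T}^n,
\end{equation*}
with a constant independent of $\xi$ and $\zeta$. Integrating against $d|\mu|$, inserting into the bound from Lemma \ref{Muz}, and taking the supremum over all $\xi$ with ${\bf r}\cdot\xi\in S_A(\eta)$ produces exactly \eqref{Stolzest3}. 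The argument presents no serious obstacle; the only points requiring care are the uniformity in $\zeta$ of the kernel comparison, which is guaranteed because $\delta_A$ in \eqref{Stolzest1} depends on $A$ alone, and the bookkeeping needed to pass from $S_A(\eta)$ to the per-coordinate angular constraints $|\theta_j|\leq A(1-r_j)$.
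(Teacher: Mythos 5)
Your proof is correct and follows essentially the same route as the paper's: both reduce to Lemma \ref{Muz} and then dominate the positive kernel at ${\bf r}\cdot\xi$ by $\delta_A^{-c({\bf t})}$ times the kernel at ${\bf r}\cdot\eta$, applying the one-dimensional estimate \eqref{Stolzest1} coordinate by coordinate (the paper phrases this as a supremum bound on $U_{\bf t}(z\cdot\overline\zeta)$ over $z\in S_A(\eta)$ with $|z_j|=r_j$, which is the same comparison). The only differences are cosmetic: you apply Lemma \ref{Muz} before the kernel comparison rather than after, and you work with $K_{\bf t}$ instead of $U_{\bf t}$, which differ only by the constant $k_{\bf t}$.
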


\begin{proof}
Using \eqref{Stolzest1} we obtain
\begin{align}\label{Stolzest2}
\sup_{z \in S_A(\eta), \vert z_j \vert = r_j} U_{\bf t} (z \cdot \overline\zeta)& = 
\sup_{z \in S_A(\eta), \vert z_j \vert = r_j} \prod_{j=1}^n 
\frac{   (1 - r_j^2)^{t_j + 1}  }{  \vert  1 - z_j \overline{\zeta_j}  \vert^{t_j + 2}   }         \nonumber            \\
& = \sup_{\vert \theta_j \vert \leq A (1 - r_j)}
\prod_{j=1}^n \frac{   (1 - r_j^2)^{t_j + 1}  }{  \vert  1 - r_j  e^{i  [ (\psi_j - \theta_j) -\varphi_j ]}   \vert^{t_j + 2} }
\nonumber \\
& \leq    \delta_A^{-c({\bf t})} 
\prod_{j=1}^n \frac{   (1 - r_j^2)^{t_j + 1}  }{  \vert  1 - r_j  e^{i   (\psi_j  -\varphi_j) }   \vert^{t_j + 2} } \nonumber \\
& = \delta_A^{-c({\bf t})} U_{\bf t} ( {\bf r} \cdot \eta \cdot \overline\zeta), 
\end{align}
where 
$z = (r_1 e^{i ( \psi_1 - \theta_1)}, \ldots, r_n e^{i ( \psi_n - \theta_n)} ) $ is in $ S_A(\eta)$ and 
$\zeta = ( e^{i\varphi_1}, \ldots, e^{i\varphi_n})$ is in $\mathbb T^n$. This and
Lemma \ref{Muz} give desired estimate.
\end{proof}

Note that in \eqref{Stolzest3} both $\eta \in \mathbb T^n$ and ${\bf r} \in [0, 1)^n$ are fixed. In order to estimate the
integral on the right hand side of \eqref{Stolzest3} we need a special partition of $\mathbb T^n$ adapted to given $\bf r$.

\subsection{Partitions $\mathcal P_{\bf r}$ of $\mathbb T^n$ and estimates of $\vert \mu \vert (Q)$, 
$Q \in \mathcal P_{\bf r}$}

For a given $0 \leq \rho < 1$ we denote by $\kappa = \kappa (\rho)$ the unique $\kappa \in [1, 2)$ such that 
\begin{equation}\label{kapparho}
\frac{\pi}{(1-\rho) \kappa (\rho)} = 2^p \qquad \mbox{where} \quad  p \in \mathbb N_0.
\end{equation}
This $p = p(\rho)$ is uniquely determined by $\rho$.  We set
\begin{equation}\label{rhopart}
x_0 = 0, \quad x_j = x_j(\rho) = 2^{j-1} (1-\rho)\kappa (\rho) \quad \mbox{for} \quad
1 \leq j \leq p+1.
\end{equation}
Clearly $0 = x_0 < x_1 < \ldots < x_{p+1} = \pi$ and $2x_j = x_{j+1}$ for $1 \leq j \leq p$. Set,  for $j = 0, \ldots, p$,
$J_j^+ = J_j^+ (\rho)  = [x_j, x_{j+1})$ and $J_j^- = J_j^- (\rho) = [-x_{j+1}, -x_j)$. These 
$2(p+1)$ semi open intervals form a partition $\tilde{\mathcal P}_\rho$ of $[-\pi, \pi)$ and the corresponding $2(p+1)$ semi open arcs $I_j^+ = I_j^+(\rho)$ and $I_j^- = I_j^-(\rho)$ on $\mathbb T$ form a partition $\mathcal P_\rho$ of 
$\mathbb T$. Note that $m_1(I_j^{\pm}) = 2^{j-2} (1-\rho)\kappa (\rho)/ \pi$ for $1 \leq j \leq p$ and
$m_1(I_1^{\pm}) = m_1(I_0^{\pm})$.

Let us fix $1 \leq B < +\infty$ and choose the smallest integer $b \geq 0$ such that $B \leq 2^b$. Assume ${\bf r} \in [0, 1)^n$
satisfies condition $\max_j (1 -  r_j) \leq B \min_j (1 - r_j)$. We can assume, without loss of generality, that $r_1 \leq r_2 \leq
\cdots \leq r_n$. The partitions $\mathcal P_{r_k}$, $1 \leq k \leq n$, of $\mathbb T$ induce a partition of $\mathbb T^n$. Namely, choose $\varepsilon$ in $\{+, - \}^n$, 
$j$ in $\{0, \ldots, p(r_1)\} \times \cdots \times \{0, \ldots, p(r_n)\} $ and set 
$Q(\varepsilon, j) = \prod_{l=1}^n I_{j_l}^{\varepsilon_l}$.
These $[2(p(r_1)+1)] \times \cdots \times [2(p(r_n)+1)]$ boxes form a partition $\mathcal P_{\bf r}$ of $\mathbb T^n$.
Moreover, for each choice of $\varepsilon$ and $j$ there is an increasing sequence $\iota = (\iota_l)_{l=1}^n$ of integers such that $\iota_1 = 0$, $\iota_n \leq b$ and $Q(\varepsilon, j)$ is a $(j_1 + \iota_1, \ldots, j_n+\iota_n)$ box.

 Every box $Q(\varepsilon, j)$ is contained in a box $R(j)$ 
consisting of all $(e^{i\theta_1}, \ldots, e^{i\theta_n})$ such that $-x_{j_k+1}(r_k) \leq \theta_k < x_{j_k+1}(r_k)$ for all 
$1 \leq k \leq n$. The box $R(j)$ is, being homothetic to $Q(\varepsilon, j)$, also a $j + \iota$ box, it is clearly centered at 
$\bf 1$ and $m_n(R(j)) = 4^n m_n(Q(\varepsilon, j))$. Hence, using \eqref{shrink} we obtain 

\begin{align}
\vert \mu \vert (Q(\varepsilon, j)) & \leq \vert \mu \vert (R(j)) \leq m_n(R(j)) (M_{j + \iota} \mu)({\bf 1}) \label{Rj} \\ 
& =  4^n m_n(Q(\varepsilon, j) ) (M_{j + \iota} \mu)({\bf 1})  \nonumber \\
& \leq  4^n 2^{(n-1)b} m_n(Q(\varepsilon, j) ) (M_j \mu)({\bf 1}) \nonumber \\
& \leq 4^n  (2B)^{n-1}  m_n(Q(\varepsilon, j) ) (M_j \mu)({\bf 1}) \label{Rj2}
\end{align}

It easily follows from \eqref{rhopart} and \eqref{kapparho} that
\begin{equation}\label{mnqej}
m_n(Q(\varepsilon, j)) = 4^{-n} 2^{j_1 + \cdots + j_n} \prod_{k=1}^n \frac{(1 - r_k) \kappa (r_k)}{\pi}
= 4^{-n} \prod_{k=1}^n 2^{j_k - p(r_k)} 
\end{equation}
which, combined with the previous inequality, gives the following proposition.
\begin{proposition}\label{Mjmu}
Let $\mu$ be a complex Borel measure on $\mathbb T^n$, let $1 \leq B < \infty$, assume ${\bf r} \in [0, 1)^n$ satisfies condition $\max_j (1-r_j) \leq B \min_j (1-r_j)$ and let $\mathcal P_{\bf r}$ be the partition of $\mathbb T^n$ associated with
$\bf r$. Then for every $\varepsilon \in \{ -1, +1 \}^n$ and $j = (j_1, \ldots, j_n)$ such that $0 \leq j_k \leq p(r_k)$ for all
$1 \leq k \leq n$ we have:
\begin{equation}\label{mjmu}
\vert \mu \vert (Q(\varepsilon, j)) \leq (2B)^{n-1}  2^{j_1 + \cdots + j_n} \prod_{k=1}^n \frac{(1 - r_k) \kappa (r_k)}{\pi}
 (M_j \mu)({\bf 1}) .
\end{equation}
\end{proposition}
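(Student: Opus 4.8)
The plan is to obtain the estimate by simply combining the two ingredients assembled immediately before the statement: the chain of inequalities terminating in \eqref{Rj2} and the explicit area computation \eqref{mnqej}. First I would invoke \eqref{Rj2}, which under the standing hypotheses (and the harmless normalization $r_1 \leq \cdots \leq r_n$) reads
\[
\vert \mu \vert (Q(\varepsilon, j)) \leq 4^n (2B)^{n-1} m_n(Q(\varepsilon, j)) (M_j \mu)({\bf 1}).
\]
Then I would substitute the value of $m_n(Q(\varepsilon, j))$ furnished by \eqref{mnqej}, namely $4^{-n} 2^{j_1 + \cdots + j_n} \prod_{k=1}^n (1-r_k)\kappa(r_k)/\pi$. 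The factor $4^n$ produced by the maximal-function estimate cancels exactly against the $4^{-n}$ in the measure of the box, and what survives is precisely \eqref{mjmu}.

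Because both displayed ingredients are already established, the final assembly carries no real difficulty; the substance of the argument lies in \eqref{Rj2} itself. Were that inequality not in hand, I would derive it as follows: enclose each partition box $Q(\varepsilon, j)$ in the concentric box $R(j)$ centered at ${\bf 1}$, for which $m_n(R(j)) = 4^n m_n(Q(\varepsilon, j))$; apply the defining supremum of the maximal function at ${\bf 1}$ to the $(j+\iota)$ box $R(j)$ to get $\vert \mu \vert (R(j)) \leq m_n(R(j)) (M_{j+\iota}\mu)({\bf 1})$; use the shrinking estimate \eqref{shrink} (with $\gamma' = j$, $\gamma'' = \iota$) to replace $M_{j+\iota}$ by $M_j$ at the cost of a factor $2^{\vert \iota \vert}$; and finally bound $2^{\vert \iota \vert}$ by $(2B)^{n-1}$.

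The one place calling for care is the bookkeeping of the shift vector $\iota$ recording the $\gamma$-type of the boxes. By construction each $Q(\varepsilon, j)$, and hence the concentric box $R(j)$, is a $(j+\iota)$ box, where $\iota$ is an increasing integer vector with $\iota_1 = 0$ and $\iota_n \leq b$; since the first entry vanishes and the remaining $n-1$ entries are each at most $b$, this gives $\vert \iota \vert \leq (n-1)b$. Together with $2^b < 2B$ (valid because $b$ is the smallest integer with $B \leq 2^b$) one obtains $2^{\vert \iota \vert} \leq 2^{(n-1)b} \leq (2B)^{n-1}$, and it is exactly this refined count — $(n-1)b$ in place of $nb$ — that yields the sharp exponent $n-1$ in the constant $(2B)^{n-1}$. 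Granting this, feeding \eqref{mnqej} into \eqref{Rj2} produces \eqref{mjmu} at once.
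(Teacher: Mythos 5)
Your proposal is correct and is exactly the paper's argument: the proposition is obtained by combining \eqref{Rj2} with the box-measure formula \eqref{mnqej}, letting the factor $4^n$ cancel against $4^{-n}$, and your re-derivation of \eqref{Rj2} (concentric box $R(j)$, the shrinking estimate \eqref{shrink} with $\vert \iota \vert \leq (n-1)b$, and $2^b \leq 2B$) matches the paper's own chain \eqref{Rj}--\eqref{Rj2} step for step.
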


\subsection{Estimate of $P_{\alpha, \beta}[d\mu]$ over $S_{A, B}(\eta)$}

Let us fix $0 \leq \rho < 1$ and consider points $x_0, \ldots x_{p+1}$ from \eqref{rhopart} which generate partition 
$\tilde{\mathcal P}_\rho$ of $[0, \pi)$. We apply \eqref{utest} to $\theta = x_1, \ldots, x_p$ and obtain,
using \eqref{rhopart} and $\kappa (\rho) \geq 1$, the following inequality for $1 \leq j \leq p = p(\rho)$:

\begin{equation}\label{atpointsxj}
u_t(\rho e^{i x_j})  \leq \frac{2^{t+1} \pi^{t+2}}{1-\rho} \left( \frac{1 - \rho}{ 2^{j-1} (1-\rho) \kappa (\rho)} 
\right)^{t+2}  \leq \frac{2^{2t+3} \pi^{t+2}}{1-\rho} 2^{-(t+2)j}. 
\nonumber
\end{equation}
It is valid also for $j = 0$ by definition
\eqref{utdim1} of $u_t$, so we obtained
\begin{equation}\label{atpointsxj}
u_t(\rho e^{i x_j})  \leq  \frac{2^{2t+3} \pi^{t+2}}{1-\rho} 2^{-(t+2)j}, \qquad   0 \leq j \leq p = p(\rho).
\end{equation}
Note that $u_t(\rho e^{i\theta})$ is even in $\theta \in (-\pi, \pi)$  and decreasing in $\theta \in (0, \pi)$ for a fixed 
$0 \leq \rho < 1$ (see \cite{OW}). Thus \eqref{atpointsxj} gives, for ${\bf t}$ in $(-1, +\infty)^n$
and ${\bf r}$ in $[0, 1)^n$:
\begin{align}\label{utonqej}
U_{\bf t} ({\bf r} \cdot \zeta) & \leq C_{\bf t}\prod_{k=1}^n 2^{-(t_k + 2)j_k}  \prod_{k=1}^n \frac{1}{1-r_k}, 
\qquad \zeta \in Q(\varepsilon, j).  
\end{align}

Now we assume $\bf r$ satisfies condition $\max_j (1-r_j) \leq B \min_j (1-r_j)$ where $B \geq 1$ and
estimate the integral appearing in Proposition \ref{PropStest3}, 
taking $\eta = {\bf 1}$ for convenience of notation. Using
\eqref{utonqej} and Proposition \ref{Mjmu} we obtain
\begin{align} \label{estintKt}
\int_{\mathbb T^n} K_{\bf t} ({\bf r} \cdot {\bf 1}, \zeta) d\vert \mu\vert (\zeta) & = k_{\bf t} \int_{\mathbb T^n}
U_{\bf t} ({\bf r} \cdot \zeta)  d\vert \mu\vert (\zeta) = k_{\bf t} \sum_{Q \in \mathcal P_{\bf r}} \int_Q
U_{\bf t} ({\bf r} \cdot \zeta)  d\vert \mu\vert (\zeta)  \nonumber \\
& \leq k_{\bf t} C_{\bf t}  \prod_{k=1}^n \frac{1}{1-r_k}   \sum_{(\varepsilon, j)}  \vert \mu \vert (Q(\varepsilon, j))  
\prod_{k=1}^n 2^{-(t_k + 2)j_k}
   \\
& \leq 2^{2n-1} B^{n-1} k_{\bf t} C_{\bf t}  \sum_j \prod_{k=1}^n 2^{-(t_k + 2)j_k}  2^{j_k} 
\frac{\kappa(r_k)}{\pi} (M_j \mu)({\bf 1})   \nonumber \\
& \leq \frac{2^{3n-1} B^{n-1} }{\pi^n}  k_{\bf t} C_{\bf t}  \sum_j   \prod_{k=1}^n  2^{-(t_k + 1)j_k}  (M_j \mu)({\bf 1}) \nonumber \\
& \leq \frac{2^{3n-1} B^{n-1} }{\pi^n}  k_{\bf t} C_{\bf t} (M_q \mu)({\bf 1})
 \nonumber 
\end{align}
where
$$
\quad q = q({\bf t}) = 2^{- \min_{1 \leq k \leq n} (t_k + 1)} < 1.
$$

Note that the right hand side does not depend on ${\bf r}$. Now we use \eqref{Stolzest3} to get
\begin{equation}\label{MNTat1}
\sup_{{\bf r} \cdot \xi \in S_{A, B}({\bf 1})} \vert P_{\alpha,\beta}[d\mu]({\bf r} \cdot \xi)\vert  \leq  
\frac{k_{\alpha, \beta}}{\delta_A^{c({\bf t})}} \frac{2^{3n-1}  B^{n-1}}{\pi^n}  C_{\bf t} (M_q \mu)({\bf 1}).
\end{equation}
Since \eqref{MNTat1} holds at any point $\eta \in \mathbb T^n$ we proved the following proposition.
\begin{proposition}\label{PmuMq}
Let $0 < A < +\infty$ and $1 \leq B < +\infty$. Set $q = 2^{- m(\alpha, \beta)} < 1$ where 
$m(\alpha, \beta) = \min_{1 \leq k \leq n} (\Re \alpha_k + \Re \beta_k + 1)$, 
 $t_j = \Re \alpha_j + \Re \beta_j$ for $1 \leq j \leq n$ and $c({\bf t}) = \sum (t_j + 2)$. 
Then there are constants $\delta_A > 0$ and $C_{\alpha, \beta} < \infty$ such that
\begin{equation}\label{pmumq}
\sup_{z \in S_{A, B} (\eta)} \vert P_{\alpha,\beta}[d\mu](z)\vert  \leq  B^{n-1}
\frac{C_{\alpha, \beta}}{  \delta^{c({\bf t})}_A   } (M_q \mu)(\eta), \qquad \mu \in \mathcal M (\mathbb T^n), 
\quad \eta \in \mathbb T^n.
\end{equation}
\end{proposition}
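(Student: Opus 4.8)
The plan is to assemble the desired pointwise bound from the two ingredients already prepared: the reduction from the cone to a positive-kernel integral in Proposition \ref{PropStest3}, and the dyadic box estimate for $|\mu|(Q(\varepsilon,j))$ in Proposition \ref{Mjmu}. Since the kernel $P_{\alpha,\beta}(z,\zeta)$ depends only on $z\cdot\overline\zeta$, and the cones $S_{A,B}(\eta)$, the partitions $\mathcal P_{\bf r}$, and the maximal functions $M_\gamma\mu$ are all equivariant under the action of $\mathbb T^n$ on $\mathbb D^n$ and on itself, I would first reduce to the base point $\eta = {\bf 1}$: rotating all variables by $\overline\eta$ carries $S_{A,B}(\eta)$ to $S_{A,B}({\bf 1})$, leaves the kernel unchanged, and transports the ${\bf 1}$-centered maximal function to its value $(M_q\mu)(\eta)$ at the original point. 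Thus it suffices to prove \eqref{pmumq} at $\eta = {\bf 1}$.

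Fixing $z\in S_{A,B}({\bf 1})$ and writing $z = {\bf r}\cdot\xi$ with $r_j = |z_j|$, membership in $S_{A,B}$ supplies exactly the ratio condition $\max_j(1-r_j)\leq B\min_j(1-r_j)$ demanded by the partition construction. For this fixed ${\bf r}$, Proposition \ref{PropStest3} bounds $\sup_{{\bf r}\cdot\xi\in S_A({\bf 1})}|P_{\alpha,\beta}[d\mu]({\bf r}\cdot\xi)|$ by a multiple of $\int_{\mathbb T^n}K_{\bf t}({\bf r}\cdot{\bf 1},\zeta)\,d|\mu|(\zeta)$, with the aperture and the degeneracy of the operator absorbed into the factor $\delta_A^{-c({\bf t})}$; this trades the complex kernel for the positive kernel $K_{\bf t}$.

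The heart of the matter is the estimate of that positive integral, carried out as in \eqref{estintKt}. I would split $\mathbb T^n$ along the partition $\mathcal P_{\bf r}$, bound $U_{\bf t}({\bf r}\cdot\zeta)$ on each box $Q(\varepsilon,j)$ by its endpoint value using the monotonicity of $u_t$ in the angular variable, which produces the weight $\prod_k 2^{-(t_k+2)j_k}$ together with the factor $\prod_k(1-r_k)^{-1}$ of \eqref{utonqej}, and bound $|\mu|(Q(\varepsilon,j))$ by Proposition \ref{Mjmu}. The factors $(1-r_k)^{-1}$ cancel against the box measures, the two dyadic weights combine coordinatewise into $2^{-(t_k+1)j_k}\leq q^{j_k}$ with $q = 2^{-m(\alpha,\beta)}$, and summing the resulting geometric series over $j$ collapses $\sum_j\prod_k 2^{-(t_k+1)j_k}(M_j\mu)({\bf 1})$ into $(M_q\mu)({\bf 1})$ by the very definition \eqref{mq} of $M_q$. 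The decisive point is that the final bound is independent of ${\bf r}$, so taking the supremum over all admissible ${\bf r}$ and $\xi$ yields \eqref{MNTat1}, and collecting constants as $C_{\alpha,\beta} = k_{\alpha,\beta}\,2^{3n-1}\pi^{-n}C_{\bf t}$ gives the proposition.

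The step I expect to require the most care — and which is handled in passing from \eqref{Rj} to \eqref{Rj2} inside Proposition \ref{Mjmu} — is the shape mismatch between the partition boxes and the maximal function: each $Q(\varepsilon,j)$ is a $(j+\iota)$-box rather than a $j$-box, so recentering at ${\bf 1}$ and invoking the shrinking estimate \eqref{shrink} costs a factor $2^{(n-1)b}\leq(2B)^{n-1}$, which is precisely the origin of the $B^{n-1}$ in \eqref{pmumq}. Making sure this factor is uniform in $j$, so that it does not corrupt the geometric summation, is the one genuinely delicate point; the remainder is bookkeeping of the explicit constants.
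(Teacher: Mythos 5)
Your proposal is correct and follows essentially the same route as the paper's proof: reduction to the base point $\eta = {\bf 1}$, Proposition \ref{PropStest3} to pass from the cone to the positive kernel $K_{\bf t}$ at the cost of $\delta_A^{-c({\bf t})}$, then the partition estimate \eqref{estintKt} combining \eqref{utonqej} with Proposition \ref{Mjmu} so that the $(1-r_k)^{-1}$ factors cancel and the dyadic weights sum into $(M_q\mu)({\bf 1})$, uniformly in ${\bf r}$. Your diagnosis of the $B^{n-1}$ factor as coming from the shrinking estimate \eqref{shrink} applied to the $(j+\iota)$-boxes, uniformly in $j$, is exactly the mechanism in the paper's passage from \eqref{Rj} to \eqref{Rj2}.
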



\subsection{Restricted non-tangential maximal function}

In view of Proposition \ref{PmuMq} it is natural to introduce a restricted non-tangential maximal function of aperture 
$0 < A < +\infty$ and restriction $1 \leq B < \infty$ of a function $u \in C(\mathbb D^n)$
by $(\tilde M_{A, B}^{NT} u) (\eta) =   \sup_{z \in S_{A, B} (\eta)} \vert u(z) \vert  $, $\eta \in \mathbb T^n$ and of a
complex Borel measure $\mu$ on $\mathbb T^n$ by
\begin{equation}\label{NTmax}
(M_{A, B}^{NT} \mu) (\eta) = \sup_{z \in S_{A, B} (\eta)} \vert P_{\alpha,\beta}[d\mu](z)\vert , \qquad \eta \in \mathbb T^n.
\end{equation}
Of course, $M_{A, B}^{NT} \mu =  \tilde M_{A, B}^{NT}( P_{\alpha, \beta}[d\mu])$.
If $f \in L^1(\mathbb T^n)$ we write simply $M_{A, B}^{NT} f$ instead of $M_{A, B}^{NT} f dm_n$.

Combining Proposition \ref{PmuMq} and Lemma \ref{G} we obtain the following weak $(1, 1)$ type estimate for the restricted
non-tangential maximal function $M_{A, B}^{NT}$.
\begin{theorem}\label{Wk11nt}
For every $0 < A < +\infty$ and $1 \leq B < \infty$ there is a constant $C = C_{\alpha, \beta, A, B}$ such that
\begin{equation}\label{wk11nt}
m_n ( \{ \eta \in \mathbb T^n : (M_{A, B}^{NT} \mu) (\eta) > \lambda \}) 
\leq C \frac{ \| \mu \|}{\lambda}, \qquad \lambda > 0.
\end{equation}
\end{theorem}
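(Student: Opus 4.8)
The plan is to combine the two principal estimates already established in this section: the pointwise control of the restricted non-tangential maximal function by the auxiliary maximal function $M_q$ furnished by Proposition \ref{PmuMq}, and the weak $(1,1)$ bound for $M_q$ furnished by Lemma \ref{G}. Since essentially all of the analytic work---the passage to the positive kernel $K_{\bf t}$ via Lemma \ref{Muz}, the Stolz cone estimate of Proposition \ref{PropStest3}, and the dyadic partition $\mathcal P_{\bf r}$ of $\mathbb T^n$ with the box estimate of Proposition \ref{Mjmu}---has been carried out in the preceding subsections, the concluding step is a short deduction.

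First I would recall that by the very definition \eqref{NTmax} one has $(M_{A, B}^{NT} \mu)(\eta) = \sup_{z \in S_{A, B}(\eta)} \vert P_{\alpha,\beta}[d\mu](z) \vert$, so that Proposition \ref{PmuMq} reads as the pointwise inequality
\[
(M_{A, B}^{NT} \mu)(\eta) \leq C' \, (M_q \mu)(\eta), \qquad \eta \in \mathbb T^n,
\]
where $C' = B^{n-1} C_{\alpha, \beta}\, \delta_A^{-c({\bf t})}$ and $q = 2^{-m(\alpha, \beta)} \in (0, 1)$ with $m(\alpha, \beta) = \min_{1 \leq k \leq n}(\Re \alpha_k + \Re \beta_k + 1)$. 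By monotonicity this domination immediately yields the inclusion of superlevel sets
\[
\{ \eta \in \mathbb T^n : (M_{A, B}^{NT} \mu)(\eta) > \lambda \} \subset \{ \eta \in \mathbb T^n : (M_q \mu)(\eta) > \lambda/C' \}, \qquad \lambda > 0.
\]

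Next I would apply Lemma \ref{G} with the rescaled threshold $\lambda/C'$ in place of $\lambda$, obtaining
\[
m_n\bigl(\{ \eta \in \mathbb T^n : (M_q \mu)(\eta) > \lambda/C' \}\bigr) \leq \frac{3^n}{(1 - \sqrt{q})^{2n}} \cdot \frac{C' \, \| \mu \|}{\lambda}.
\]
Combining this with the set inclusion above gives the desired estimate \eqref{wk11nt} with the explicit constant
\[
C = C_{\alpha, \beta, A, B} = \frac{3^n \, B^{n-1} \, C_{\alpha, \beta}}{(1 - \sqrt{q})^{2n} \, \delta_A^{c({\bf t})}},
\]
which depends only on $n$, $\alpha$, $\beta$, $A$ and $B$, as required.

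As for difficulty, there is no substantial obstacle remaining at this stage: the two cited results fit together purely by monotonicity of measure under inclusion and the elementary rescaling of the weak-type threshold. The genuinely delicate part of the argument lies upstream, in establishing the uniform-in-$\bf r$ bound of Proposition \ref{PmuMq}, where the crucial point---inherited from \eqref{maxgammaest} through Lemma \ref{G}---is that the relevant constants remain independent of the box index $\gamma$ and hence of the aperture parameters. Once Proposition \ref{PmuMq} is in hand, the weak $(1,1)$ estimate for $M_{A,B}^{NT}$ is a formality.
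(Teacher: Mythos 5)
Your proposal is correct and follows exactly the paper's own route: the paper derives Theorem \ref{Wk11nt} precisely by combining the pointwise domination $(M_{A,B}^{NT}\mu)(\eta) \leq B^{n-1} C_{\alpha,\beta}\, \delta_A^{-c({\bf t})} (M_q\mu)(\eta)$ of Proposition \ref{PmuMq} with the weak $(1,1)$ bound for $M_q$ of Lemma \ref{G}, via the same superlevel-set inclusion and threshold rescaling you describe. Your version simply writes out the short deduction (and the explicit constant) that the paper leaves implicit.
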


\subsection{Applications to Fatou type theorems}

Now we turn to Fatou type theorems, Theorem \ref{Wk11nt} will play a key role.

\begin{definition}
We write $NT_{A, B}- \lim_{z \to \eta} u(z) = l$ if for every $\varepsilon > 0$ there is a $\delta > 0$ such that 
$\vert u(z) - l \vert < \varepsilon$ whenever $\vert z - \eta \vert < \delta$ and $z \in S_{A, B}(\eta)$. Here $u$ is a function on 
$\mathbb D^n$, $0 < A < \infty$, $1 \leq B < \infty$ and $\eta$ is in $\mathbb T^n$. 

If  $NT_{A, B}- \lim_{z \to \eta} u(z) = l$ for every $0 < A < \infty$ and $1 \leq B < \infty$ we write
$RNT- \lim_{z \to \eta} u(z) = l$ (restricted non-tangential limit).
\end{definition}

If $u$ is real valued we can analogously consider $NT_{A, B} - \limsup_{z \to \eta} u(z)$.

\begin{theorem}\label{NTforL1}
Let $f \in L^1(\mathbb T^n)$, $u = P_{\alpha, \beta} [f]$. Then
\begin{equation}
RNT - \lim_{z \to \eta} u(z) = f(\eta) \quad \mbox{for almost every} \quad \eta \in \mathbb T^n.
\end{equation}
\end{theorem}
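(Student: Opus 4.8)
The plan is to run the classical maximal-function argument that deduces almost-everywhere convergence from a weak type $(1,1)$ inequality together with density of continuous functions. Since $RNT-\lim$ means $NT_{A,B}-\lim$ for all $0<A<\infty$ and $1\le B<\infty$, and since $S_{A,B}(\eta)\subset S_{A',B'}(\eta)$ whenever $A\le A'$ and $B\le B'$, it suffices to treat one fixed pair $(A,B)$ and then take a union over the sequence $A=B=k$, $k\in\mathbb N$: the countable union of the resulting exceptional null sets is null, and off that set the $NT_{A,B}$-limit equals $f(\eta)$ for every admissible $(A,B)$ by the monotonicity of the restricted cones (any $(A,B)$ is dominated by some $(k,k)$).

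First I would fix $(A,B)$ and introduce the oscillation
$$
\Lambda u(\eta)=\limsup_{\substack{z\to\eta\\ z\in S_{A,B}(\eta)}}\,|u(z)-f(\eta)|,
$$
and reduce the theorem to showing $\Lambda u=0$ almost everywhere. Given $\varepsilon>0$, I would use density of $C(\mathbb T^n)$ in $L^1(\mathbb T^n)$ to split $f=g+h$ with $g\in C(\mathbb T^n)$ and $\|h\|_{L^1}<\varepsilon$, so that $u=P_{\alpha,\beta}[g]+P_{\alpha,\beta}[h]$. By Theorem \ref{nepprod} the function $P_{\alpha,\beta}[g]$ extends continuously to $\overline{\mathbb D^n}$ with boundary values $g$, hence $P_{\alpha,\beta}[g](z)\to g(\eta)$ as $z\to\eta$ even unrestrictedly, contributing no oscillation. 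Combining this with the pointwise bound $|P_{\alpha,\beta}[h](z)|\le (M_{A,B}^{NT}h)(\eta)$, valid for every $z\in S_{A,B}(\eta)$ by the very definition of $M_{A,B}^{NT}$, and with the triangle inequality $|u(z)-f(\eta)|\le|P_{\alpha,\beta}[g](z)-g(\eta)|+|P_{\alpha,\beta}[h](z)|+|h(\eta)|$, taking $\limsup$ along $S_{A,B}(\eta)$ yields
$$
\Lambda u(\eta)\le (M_{A,B}^{NT}h)(\eta)+|h(\eta)|.
$$

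Next I would estimate level sets. For $\lambda>0$ the inclusion
$$
\{\,\Lambda u>2\lambda\,\}\subset\{\,M_{A,B}^{NT}h>\lambda\,\}\cup\{\,|h|>\lambda\,\}
$$
holds, so by the weak $(1,1)$ estimate of Theorem \ref{Wk11nt} applied to $d\mu=h\,dm_n$ (for which $\|\mu\|=\|h\|_{L^1}$) together with Chebyshev's inequality,
$$
m_n(\{\,\Lambda u>2\lambda\,\})\le (C+1)\frac{\|h\|_{L^1}}{\lambda}<(C+1)\frac{\varepsilon}{\lambda}.
$$
Letting $\varepsilon\to 0$ forces $m_n(\{\Lambda u>2\lambda\})=0$ for every $\lambda>0$, and a union over $\lambda=1/k$ gives $\Lambda u=0$ a.e., that is $NT_{A,B}-\lim_{z\to\eta}u(z)=f(\eta)$ a.e. The union over $(A,B)=(k,k)$ described above then completes the argument.

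Everything genuinely delicate has in fact already been done: the hard core is the weak $(1,1)$ bound for $M_{A,B}^{NT}$, i.e.\ Theorem \ref{Wk11nt}, which rests on the dyadic partition estimates behind Proposition \ref{PmuMq} and Lemma \ref{G}. The remaining steps are routine; the only two places asking for a little care are the reduction from $RNT$ to a countable family of apertures via monotonicity of the restricted cones, and the observation—supplied by Theorem \ref{nepprod}—that the continuous part of $f$ contributes no oscillation.
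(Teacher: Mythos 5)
Your proposal is correct, and its skeleton is the same as the paper's: fix $(A,B)$, decompose $f$ into a continuous function plus an $L^1$-small remainder, handle the continuous part by Theorem \ref{nepprod}, and control the remainder by the weak $(1,1)$ estimate of Theorem \ref{Wk11nt}. The one structural difference is in the oscillation functional. The paper uses the two-point oscillation $\Omega^\delta(g,\eta)=\sup\{|P_{\alpha,\beta}[g](z)-P_{\alpha,\beta}[g](w)| : z,w\in S_{A,B}(\eta)\cap B(\eta,\delta)\}$, which only yields \emph{existence} of the $NT_{A,B}$-limit $F(\eta)$ a.e.; the paper must then make a separate appeal to the norm convergence \eqref{Lpconv} of Theorem \ref{LPnorm} to identify $F=f$ a.e. You instead measure oscillation against $f(\eta)$ itself, $\Lambda u(\eta)=\limsup |u(z)-f(\eta)|$, which folds existence and identification into a single estimate; the price is the extra term $|h(\eta)|$ and hence the additional Chebyshev bound $m_n(\{|h|>\lambda\})\le \|h\|_1/\lambda$, which the paper's version does not need. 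Both routes are equally rigorous; yours is marginally more self-contained (no dependence on Theorem \ref{LPnorm}), while the paper's reuses machinery it has already established. You also make explicit the reduction from $RNT$-limits to the countable family $(A,B)=(k,k)$ via monotonicity of the restricted cones $S_{A,B}(\eta)$; the paper leaves this step implicit, so spelling it out is a small improvement rather than a deviation.
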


\begin{proof}
We fix $0 < A <\infty$ and $1 \leq B < \infty$.
Let us define for $\delta > 0$, $g \in L^1(\mathbb T^n)$ and $\eta \in \mathbb T^n$
\begin{equation}\label{omegadelta}
\Omega^\delta (g, \eta) = \sup \{ \vert P_{\alpha, \beta}[g] (z) -  P_{\alpha, \beta}[g] (w) \vert : z, w \in S_{A, B} (\eta)
\cap B(\eta, \delta)  \}
\end{equation}
and note the following properties of $\Omega^\delta (g, \eta)$:

$1^o$. $\Omega^\delta (g, \eta)$ is increasing in $\delta > 0$ and 
$\Omega^\delta (g, \eta) \leq 2 (M_{A, B}^{NT} g) (\eta)$.

$2^o$. $\lim_{\delta \to 0} \Omega^\delta (g, \eta) = 0$ if and only if 
$NT_{A, B} - \lim_{z \to \eta} P_{\alpha, \beta}[g](z)$ exists.

$3^o$. $\lim_{\delta \to 0} \Omega^\delta (g, \eta) = \lim_{\delta \to 0} \Omega^\delta (g - \varphi, \eta)$ for every
$\varphi \in C(\mathbb T^n)$.

The first two properties are obvious, the third one follows from Theorem \ref{nepprod}. Set
$E_\varepsilon = \{ \zeta \in \mathbb T^n : \lim_{\delta \to 0} \Omega^\delta (f, \zeta) > \varepsilon \}$ for $\varepsilon > 0$.
By properties $1^o$ and $3^o$ we have, for any $\varphi \in C(\mathbb T^n)$:
\begin{equation*}
E_\varepsilon = \{ \zeta \in \mathbb T^n : \lim_{\delta \to 0} \Omega^\delta (f - \varphi, \zeta) > \varepsilon \} \subset 
 \{ \zeta \in \mathbb T^n : [M_{A, B}^{NT} (f - \varphi)] (\zeta) > \varepsilon/2 \}.
\end{equation*}
Using Theorem \ref{Wk11nt} we deduce
\begin{equation*}
m_n (  \{ \zeta \in \mathbb T^n : [M_{A, B}^{NT} (f - \varphi)] (\zeta) > \varepsilon/2 \} ) \leq
\frac{2C_{\alpha, \beta, A, B}}{\varepsilon} \| f - \varphi \|_{L^1 (\mathbb T^n)}.
\end{equation*}
Since $C (\mathbb T^n)$ is dense in $L^1 (\mathbb T^n)$, the set 
$ \{ \zeta \in \mathbb T^n : [M_{A, B}^{NT} (f - \varphi)] (\zeta) > \varepsilon/2 \}$ is a null set and therefore $E_\varepsilon$ is also a null set. Since $\varepsilon > 0$ is arbitrary it follows that
 $\lim_{\delta \to 0} \Omega^\delta (f, \eta) = 0$ for almost every $\eta \in \mathbb T^n$. Therefore, using property $2^o$,
$NT_{A, B} - \lim_{z \to \eta} P_{\alpha, \beta}[f](z) = F(\eta)$ exists for almost every $\eta \in \mathbb T^n$.
Now we have $F(\eta) = f(\eta)$ by Theorem \ref{LPnorm}, formula \eqref{Lpconv}.
\end{proof} 

A local version of the above theorem for $n$ - harmonic functions can be found in \cite{Z} (Theorem 4.13, Chapter XVII).
It is not possible to replace $RNT$ limits in the above theorem by non-restricted $NT$ limits, i.e. by limits over full Stoltz cones
$S_A(\eta)$, see \cite{JMZ} and \cite{MZ}.

A special case of the following lemma appeared in \cite{Rud2}  (Lemma 3, page 29) where $n$ - harmonic functions were
considered.

\begin{lemma}\label{vanishes}
If a complex Borel measure $ \mu  $ on $ \mathcal M (\mathbb T^n)$ vanishes in an open set $ V\subset \mathbb{T}^n$ and 
$u = P_{\alpha, \beta}[d\mu]$ then 
\begin{equation}\label{vanisheseq}
RNT - \lim_{z \to \eta} u(z) = 0 \quad \mbox{for almost every} \quad \eta \in V.
\end{equation}
\end{lemma}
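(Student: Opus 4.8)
The plan is to localize and to separate out, one at a time, a single coordinate in which the mass of $\mu$ is genuinely away from $\eta$, using the restricted maximal function of Proposition~\ref{PmuMq} in dimension $n-1$ to absorb the remaining (possibly singular) directions in an integrated sense. Since $V$ is a countable union of compact product sets $K=\prod_j K_j$ whose concentric doubles $K^\ast=\prod_j K_j^\ast$ still lie in $V$, and since the cones $S_{A,B}(\eta)$ increase with $A$ and $B$, it suffices to fix one such $K$ and one pair $(A,B)$ from a countable cofinal family and to prove $NT_{A,B}-\lim_{z\to\eta}u(z)=0$ for a.e. $\eta\in K$. As $\mu$ vanishes on $V\supset K^\ast$, its total variation is carried by $(K^\ast)^c$, and every $\zeta\notin K^\ast$ has some coordinate with $\zeta_k\notin K_k^\ast$, so that $|\zeta_k-\eta_k|\ge\rho_k>0$ uniformly for $\eta\in K$, where $\rho_k$ is half the length of $K_k$. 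Accordingly I would split $\mathbb T^n$ into disjoint Borel sets $F_1,\dots,F_n$ with $F_k\subset\{\zeta:\zeta_k\notin K_k^\ast\}$ and write $\mu=\sum_{k=1}^n\mu_k$, $\mu_k=\mu|_{F_k}$, so that $u=\sum_k P_{\alpha,\beta}[d\mu_k]$.

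Next I would estimate each piece. Fix $\eta\in K$ and $z=(r_1e^{i\theta_1},\dots,r_ne^{i\theta_n})\in S_{A,B}(\eta)$ with $|z-\eta|<\delta\le\rho_k/2$. Because $|1-z_k\overline{\zeta_k}|=|\zeta_k-z_k|\ge\rho_k-\delta\ge\rho_k/2$ for $\zeta\in F_k$, the $k$-th kernel factor obeys, by \eqref{utest} and the definition of $v_{\alpha_k,\beta_k}$ with $t_k=\Re\alpha_k+\Re\beta_k>-1$,
\[
|v_{\alpha_k,\beta_k}(z_k\overline{\zeta_k})|\le C_k\,(1-r_k^2)^{t_k+1},
\]
which tends to $0$ as $r_k\to1$ since $t_k+1>0$. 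Pulling this factor out and bounding the remaining moduli $\prod_{j\ne k}|v_{\alpha_j,\beta_j}(z_j\overline{\zeta_j})|$ by the positive kernel (Lemma~\ref{Muz}), I would dominate $|P_{\alpha,\beta}[d\mu_k](z)|$ by $C_k(1-r_k^2)^{t_k+1}$ times the integral of $K_{{\bf t}'}(z',\cdot)$, ${\bf t}'=(t_j)_{j\ne k}$, against the pushforward $\nu_k$ of $|\mu_k|$ under deletion of the $k$-th coordinate. Since $K_{{\bf t}'}=P_{{\bf t}'/2,{\bf t}'/2}$ (the remark after \eqref{utdimn}) and $z'=(z_j)_{j\ne k}$ lies in the restricted cone $S_{A,B}(\eta')$ of $\mathbb T^{n-1}$, Proposition~\ref{PmuMq} applied in dimension $n-1$ gives
\[
\bigl|P_{\alpha,\beta}[d\mu_k](z)\bigr|\le C_k'\,(1-r_k^2)^{t_k+1}\,(M_q\nu_k)(\eta'),
\]
with $q<1$ the constant of Proposition~\ref{PmuMq} associated to $\alpha'=(\alpha_j)_{j\ne k}$, $\beta'=(\beta_j)_{j\ne k}$.

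Finally I would invoke the weak-type bound. By Lemma~\ref{G} in dimension $n-1$, $M_q\nu_k<\infty$ $m_{n-1}$-a.e. on $\mathbb T^{n-1}$, so $(M_q\nu_k)(\eta')<\infty$ for $m_n$-a.e. $\eta\in\mathbb T^n$ and every $k$. For such $\eta$ each summand tends to $0$ as $z\to\eta$ inside $S_{A,B}(\eta)$, because $(1-r_k^2)^{t_k+1}\to0$, whence $|u(z)|\le\sum_k|P_{\alpha,\beta}[d\mu_k](z)|\to0$. Letting $(A,B)$ range over a countable cofinal family and $K$ over the countable cover of $V$ then yields \eqref{vanisheseq}.

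I expect the main obstacle to be precisely the product structure of the kernel: one cannot take a supremum of $P_{\alpha,\beta}(z,\cdot)$ over the support of $\mu$, since in the coordinates where $\zeta$ is \emph{not} separated from $\eta$ the kernel concentrates and blows up as $z\to\eta$. The device that resolves this is to isolate a single coordinate $k$ in which $\mu$ is genuinely supported away from $\eta_k$ — supplying the vanishing factor $(1-r_k^2)^{t_k+1}$ — while the remaining coordinates are controlled only in the integrated sense, exactly by the restricted maximal function $M_q$, which is finite almost everywhere.
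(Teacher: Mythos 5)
Your proof is correct, but it takes a genuinely different route from the paper's. The paper does not decompose the measure at all: it fixes a point $\eta\in V$ at which the $n$-dimensional maximal function is finite, $(M_q\mu)(\eta)<\infty$ (almost every point, by Lemma \ref{G}), rotates so that $\eta={\bf 1}$, and then re-runs the internals of the proof of Proposition \ref{PmuMq} --- the partition $\mathcal P_{\bf r}$ into dyadic boxes, \eqref{Stolzest3}, \eqref{estintKt} and \eqref{Rj} --- observing that every box $R(j)$ with $j_k<p(r_k)-N$ for all $k$ lies inside $I_N^n\subset V$ and hence carries no $|\mu|$-mass; what survives is a tail of the convergent series $\sum_j q^{|j|}(M_j\mu)({\bf 1})=(M_q\mu)({\bf 1})<\infty$, which tends to $0$ as ${\bf r}\to{\bf 1}$. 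You instead decompose the measure rather than the torus: $\mu=\sum_k\mu_k$ according to which coordinate is separated from $K$, you extract the explicit vanishing factor $(1-r_k^2)^{t_k+1}$ from the separated coordinate, and you control the remaining $n-1$ coordinates by invoking Proposition \ref{PmuMq} and Lemma \ref{G} as black boxes in dimension $n-1$, with Fubini supplying the almost-everywhere statement. Both arguments are sound and rest on the same Section \ref{FatouSec} machinery, but they buy slightly different things: the paper's version stays in dimension $n$ and gives convergence at \emph{every} point where $(M_q\mu)(\eta)<\infty$, with the limit exhibited as the tail of a convergent series; yours gives an explicit decay rate $(1-r_k)^{t_k+1}$ and makes transparent why the exceptional set disappears when $n=1$ (no remaining coordinates, hence pointwise localization, consistent with the paper's remark citing \cite{OW}). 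Two cosmetic points: the inequality you actually need for the separated factor is just the definition \eqref{utdim1} combined with $|1-z_k\overline{\zeta_k}|\ge\rho_k/2$, not \eqref{utest}, which concerns the opposite regime; and the coordinate-wise modulus bound $|v_{\alpha_k,\beta_k}|\le C\,u_{t_k}$ carries the factor coming from the imaginary parts of the exponents, as in Lemma \ref{Muz} --- neither affects the argument.
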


\begin{proof}
We fix an aperture $0 < A < \infty$ and $1 \leq B < \infty$.
By Lemma \ref{G} $M_q \mu$ is finite almost everywhere on $\mathbb T^n$ and in particular on $V$.
Hence it suffices to prove that $NT_{A, B} - \lim_{z \to \eta} u(z) = 0$ whenever $\eta$ is a point in $V$ such that
$(M_q \mu)(\eta) < +\infty$.  We assume, for convenience of notation, that $\eta = {\bf 1}$. Since ${\bf 1}$ is in $V$, there
is a positive integer $N$ such that $I_N^n \subset V$, where $I_N = \{ e^{i\theta} : \vert \theta \vert \leq \pi 2^{-N} \}$.
Now assume ${\bf r}$ in $[0, 1)^n$ satisfies $\max_j (1-r_j) \leq B \min_j (1-r_j)$.
Using \eqref{Stolzest3}, \eqref{estintKt}  and  \eqref{Rj} we obtain 
\begin{align} \label{forsing}
\sup_{\xi :\; {\bf r} \cdot \xi \in S_{A, B} ({\bf 1})} \vert P_{\alpha,\beta}[d\mu]({\bf r} \cdot \xi) \vert 
& \leq C_{\bf t} \frac{k_{\alpha, \beta}}{\delta_A^{c({\bf t})}      }
\prod_{k=1}^n \frac{1}{1-r_k}   \sum_{(\varepsilon, j)} \vert \mu \vert (Q(\varepsilon, j)) \prod_{k=1}^n  
\frac{1}{2^{(t_k + 2)j_k}} \nonumber  \\
& \leq  C_{\bf t} 2^n  \frac{k_{\alpha, \beta}}{\delta_A^{c({\bf t})}       }
\prod_{k=1}^n \frac{1}{1-r_k}   \sum_{j \in I({\bf r})} \vert \mu \vert  (R(j))  \prod_{k=1}^n  \frac{1  }{2^{(t_k + 2)j_k}},
\end{align}
where $I({\bf r}) = \{0, \ldots, p(r_1)\} \times \cdots \times \{0, \ldots, p(r_n)\}$.
We see, using  \eqref{kapparho} and \eqref{rhopart}, that $\vert \mu \vert (R(j)) = 0$ whenever $j_k < p(r_k) - N$ for all
$1 \leq k \leq n$. Next, for ${\bf r} \in [0, 1)^n$ we set $Z( {\bf r} ) =  \{ (j_1, \ldots, j_n) \in \mathbb Z_+^n : 
j_k < p(r_k) - N,     k = 1, \ldots, n \} $. Therefore, arguing as in derivation of \eqref{MNTat1}, 
we deduce from \eqref{forsing}:
\begin{align}\label{remainder}
\varphi ({\bf r}) & = \sup_{\xi :\; {\bf r} \cdot \xi \in S_{A, B} ({\bf 1})} \vert P_{\alpha,\beta}[d\mu]({\bf r} \cdot \xi) \vert
\nonumber \\
& \leq C(\alpha, \beta, A)  \prod_{k=1}^n \frac{1}{1-r_k}   
\sum_{j \in \mathbb Z_+^n \setminus Z( {\bf r} )} \vert \mu \vert  (R(j))  \prod_{k=1}^n  \frac{1}{2^{(t_k + 2)j_k}}
\nonumber \\
&  \leq C(\alpha, \beta, A) B^{n-1} \sum_{j \in \mathbb Z_+^n \setminus Z( {\bf r} )} q^{\vert j \vert}  (M_j \mu) ({\bf 1}).
\end{align}
By our assumption $(M_q \mu) ({\bf 1}) = \sum_{j \in Z_+^n} q^{\vert j \vert} (M_j \mu) ({\bf 1}) < +\infty$. Since
$p(\rho) $ tends to $ +\infty$ as $\rho \uparrow 1$ we see that for every
finite $F \subset \mathbb Z_+^n$ there is an $\delta > 0$ such that $F \subset Z ( {\bf r} )$ whenever $1-r_k < \delta$ for all
$1 \leq k \leq n$. Thus the sum appearing in \eqref{remainder} can be seen as a remainder of a convergent sum, it tends to
zero as ${\bf r} \to {\bf 1}$. Hence $\lim_{\bf r \to 1} \varphi ({\bf r}) = 0$ which is equivalent to 
$NT_{A, B} -\lim_{z \to {\bf 1}} u(z) = 0$.
\end{proof}

Convergence to $0$ at every point $\eta$ in $V$
fails even in the special case of $n$ - harmonic functions (that is, $\alpha = \beta = 0$).
See \cite{Rud2}, page 25, for an example. This failure of localization is specifically multidimensional phenomenon: if $n=1$ there are pointwise results of this nature, for example Theorem 6.2 from \cite{OW}.

\begin{theorem}\label{NTsing}
Assime  $ \mu \in \mathcal M (\mathbb T^n) $ is singular with respect to $ m_n $. Then 
\begin{equation*}
RNT- \lim_{z \to \zeta}  P_{\alpha,\beta}[d\mu](z)=0  \quad \mbox{for almost every}\quad \zeta \in  \mathbb T^n.
\end{equation*}
\end{theorem}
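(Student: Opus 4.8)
The plan is to package the two cited results---the localization Lemma \ref{vanishes} and the weak $(1,1)$ estimate of Theorem \ref{Wk11nt}---by splitting $\mu$ into a piece that vanishes on an open set of full measure and a remainder of arbitrarily small total variation. Throughout I fix $0 < A < \infty$ and $1 \leq B < \infty$ and write $L(\eta) = NT_{A, B} - \limsup_{z \to \eta} \vert P_{\alpha,\beta}[d\mu](z) \vert$ (this is meaningful since $\vert P_{\alpha,\beta}[d\mu]\vert$ is real valued). It suffices to prove $L = 0$ almost everywhere: the full $RNT$ statement then follows by taking the union of the countably many exceptional null sets over $A, B \in \mathbb N$, using that $S_{A,B}(\eta) \subseteq S_{A',B'}(\eta)$ whenever $A \leq A'$ and $B \leq B'$, so that convergence along the integer cones forces convergence along every $S_{A,B}(\eta)$.

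First I would exploit singularity together with regularity. Since $\mu \perp m_n$, the total variation $\vert \mu \vert$ is carried on a Borel set $S$ with $m_n(S) = 0$, and by inner regularity of $\vert \mu \vert$, given $\varepsilon > 0$ there is a compact $K \subseteq S$ with $\vert \mu \vert (\mathbb T^n \setminus K) < \varepsilon$. Let $\nu$ be the restriction of $\mu$ to $K$ and set $\sigma = \mu - \nu$, the restriction of $\mu$ to $\mathbb T^n \setminus K$. Then $\mu = \nu + \sigma$, we have $\| \sigma \| = \vert \mu \vert (\mathbb T^n \setminus K) < \varepsilon$, the measure $\nu$ vanishes on the open set $V = \mathbb T^n \setminus K$ (because $\vert \nu \vert (V) = \vert \mu \vert (V \cap K) = 0$), and $m_n(K) \leq m_n(S) = 0$.

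Next, by linearity $P_{\alpha,\beta}[d\mu] = P_{\alpha,\beta}[d\nu] + P_{\alpha,\beta}[d\sigma]$, so
\begin{equation*}
L(\eta) \leq NT_{A, B} - \limsup_{z \to \eta} \vert P_{\alpha,\beta}[d\nu](z) \vert + (M_{A, B}^{NT} \sigma)(\eta).
\end{equation*}
Lemma \ref{vanishes} applied to $\nu$ and $V$ makes the first term vanish for almost every $\eta \in V$, hence for almost every $\eta \in \mathbb T^n$ since $m_n(K) = 0$. Thus $L(\eta) \leq (M_{A, B}^{NT} \sigma)(\eta)$ almost everywhere, and Theorem \ref{Wk11nt} yields, for each $\lambda > 0$,
\begin{equation*}
m_n(\{ L > \lambda \}) \leq m_n(\{ M_{A, B}^{NT} \sigma > \lambda \}) \leq C \frac{\| \sigma \|}{\lambda} < C \frac{\varepsilon}{\lambda}.
\end{equation*}
As $\varepsilon > 0$ is arbitrary, $m_n(\{ L > \lambda \}) = 0$ for every $\lambda > 0$, and letting $\lambda$ run through a sequence tending to $0$ gives $L = 0$ almost everywhere, as desired.

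The only delicate point, and the main obstacle, is the decomposition step: one must arrange the split so that Lemma \ref{vanishes} and Theorem \ref{Wk11nt} each govern exactly one summand---that is, $\nu$ must vanish on an open set whose complement is $m_n$-null, while $\sigma$ must have controllably small norm. Singularity supplies the null carrier and inner regularity supplies the compact approximation, after which the passage from fixed $(A,B)$ to the full restricted non-tangential statement is routine via the nesting of the cones.
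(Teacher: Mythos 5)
Your proof is correct and takes essentially the same route as the paper's: the identical decomposition $\mu = \mu_K + (\mu - \mu_K)$, with $K$ a compact null carrier obtained from singularity plus inner regularity, where Lemma \ref{vanishes} annihilates the piece supported on $K$ and the weak $(1,1)$ estimate of Theorem \ref{Wk11nt} controls the remainder of small total variation. The only cosmetic differences are that you work with the limsup function $L$ rather than the level sets $\Lambda_{A,B}(\cdot, \delta)$, and that you make explicit the routine passage from fixed $(A,B)$ to the full $RNT$ statement via countable union over integer apertures, which the paper leaves implicit.
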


\begin{proof}
We fix $0 < A < \infty$ and $1 \leq B < \infty$ and set for $\nu \in \mathcal M(\mathbb T^n)$ and $\delta > 0$
\begin{equation}\label{lambdaset}
\Lambda_{A, B}  (\nu, \delta) = \{ \zeta \in \mathbb T^n :  
NT_{A, B}  - \limsup_{z \to \zeta} \vert  P_{\alpha,\beta}[d\nu](z)  \vert > \delta  \},
\end{equation}
\begin{equation}\label{Phiset}
\Phi_{A, B}  (\nu, \delta) = \{ \zeta \in \mathbb T^n :  (M_{A, B}^{NT} \nu)  (\zeta) > \delta  \}.
\end{equation}
Clearly $\Lambda_{A, B} (\nu, \delta) \subset \Psi_{A, B} (\nu, \delta)$.
It suffices to prove that $\Lambda_{A, B}  (\mu, \delta)$ is a null set for every $\delta >  0$. Let us fix $\delta > 0$. By Lemma
\ref{vanishes} the sets $\Lambda_{A, B} (\nu, \delta)$ and $\Lambda_{A, B} (\nu - \sigma, \delta)$ are equal up to a set 
of measure zero whenever $\sigma \in \mathcal M(\mathbb T^n)$ is supported on a compact $K$ of Lebesgue measure $0$. Hence, for such $\sigma$ we have, using Theorem \ref{Wk11nt}:
\begin{align}
m_n( \Lambda_{A, B} (\mu, \delta))  & = m_n( \Lambda_{A, B} (\mu - \sigma, \delta)) 
\leq m_n( \Psi_{A, B} (\mu - \sigma, \delta)) \nonumber \\
& \leq C_{\alpha, \beta, A, B} \frac{ \| \mu - \sigma \|}{\delta}. \label{NTsingeq}
\end{align}
By regularity properties of complex Borel measures, for a given $\varepsilon > 0$ there is a compact $K \subset \mathbb T^n$
such that $m_n (K) = 0$ and $\| \mu - \mu_K \| < \varepsilon$, where $\mu_K (E) = \mu (E \cap K)$. Hence, taking
$\sigma = \mu_K$ in \eqref{NTsingeq} we see that $\Lambda_{A, B} (\mu, \delta)$ is contained in a set of arbitrarily small measure, hence  $\Lambda_{A, B} (\mu, \delta)$ is a null set.
\end{proof}

Combining Theorem \ref{NTforL1} and Theorem \ref{NTsing} we obtain the following corollary. In the special case of $n$ - 
harmonic functions it appeared in \cite{Z1}. However, for $(\alpha, \beta)$ - harmonic functions it is new even in dimension 1, although in   \cite{OW}       one parameter case was studied.

\begin{corollary}\label{Combined}
Let $\mu \in \mathcal M (\mathbb T^n)$ and $u = P_{\alpha, \beta}[d\mu]$. Then 
\begin{equation}\label{combinedeq}
RNT - \lim_{z \to \zeta} u(z) = f(\zeta) \qquad \mbox{for almost every} \quad \zeta \in \mathbb T^n
\end{equation}
where $d\mu = fdm_n + d\sigma$ is Lebesgue - Radon - Nikodym decomposition of $d\mu$ into absolutely continuous part
$fdm_n$ and singular part $d\sigma$.
\end{corollary}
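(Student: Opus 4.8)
The plan is to exploit the linearity of the operator $P_{\alpha, \beta}$ in its measure argument together with the two Fatou type theorems already proved. First I would invoke the Lebesgue--Radon--Nikodym theorem to write the decomposition $d\mu = f\,dm_n + d\sigma$, with $f \in L^1(\mathbb T^n)$ and $\sigma \in \mathcal M(\mathbb T^n)$ singular with respect to $m_n$. Since $P_{\alpha, \beta}$ is linear, this yields the pointwise splitting
\[
u(z) = P_{\alpha, \beta}[d\mu](z) = P_{\alpha, \beta}[f](z) + P_{\alpha, \beta}[d\sigma](z), \qquad z \in \mathbb D^n.
\]

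Next I would apply Theorem \ref{NTforL1} to the first summand and Theorem \ref{NTsing} to the second. The former produces a null set $N_1 \subset \mathbb T^n$ with $RNT - \lim_{z \to \zeta} P_{\alpha, \beta}[f](z) = f(\zeta)$ for every $\zeta \notin N_1$, while the latter produces a null set $N_2 \subset \mathbb T^n$ with $RNT - \lim_{z \to \zeta} P_{\alpha, \beta}[d\sigma](z) = 0$ for every $\zeta \notin N_2$.

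Finally, I would combine these on the complement of $N_1 \cup N_2$. For a fixed pair of parameters $0 < A < \infty$ and $1 \leq B < \infty$, the $NT_{A, B}$ limit is additive whenever both constituent limits exist: for $z \in S_{A, B}(\zeta)$ with $z \to \zeta$, the value $u(z)$ is the sum of two quantities converging respectively to $f(\zeta)$ and to $0$. As this holds for every choice of $A$ and $B$, we obtain $RNT - \lim_{z \to \zeta} u(z) = f(\zeta)$ for every $\zeta \notin N_1 \cup N_2$, and $m_n(N_1 \cup N_2) = 0$, which is precisely \eqref{combinedeq}.

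I do not expect a genuine obstacle here, since the corollary is a formal consequence of the two preceding theorems. The only point requiring a word of care is that the two exceptional null sets must be discarded \emph{together}, so that both restricted limits are available simultaneously on a common set of full measure; additivity of the $NT_{A, B}$ limit on that set is then immediate from the definition, and taking the union with respect to the countably many rational $A$ and integer $B$ (or simply noting that the relevant null sets above are already independent of $A$ and $B$ once the exceptional sets of the two theorems are fixed) causes no difficulty.
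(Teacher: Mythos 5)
Your proposal is correct and follows exactly the paper's route: the paper obtains this corollary by combining Theorem \ref{NTforL1} and Theorem \ref{NTsing} via the Lebesgue--Radon--Nikodym decomposition and linearity of $P_{\alpha,\beta}$, precisely as you do. Your additional care about discarding the two exceptional null sets together (and noting they are independent of $A$ and $B$, since the cited theorems are already stated for $RNT$ limits) is a sound elaboration of what the paper leaves implicit.
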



\begin{proposition}
Let $u \in sh_{\alpha, \beta} (\mathbb D^n)$ and assume $M^+ u (\zeta) = \sup_{0 \leq r < 1} \vert u(r\zeta) \vert$ is
in $L^1(\mathbb T^n)$. Then $u = P_{\alpha, \beta} [f]$ for some $f \in L^1 (\mathbb T^n)$.
\end{proposition}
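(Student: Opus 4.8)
The plan is to first produce a representing complex measure and then show that it is absolutely continuous. Since $M^+ u \in L^1(\mathbb{T}^n)$, for every $0 \le r < 1$ the pointwise bound $|u(r\zeta)| \le M^+u(\zeta)$ gives
\[
\int_{\mathbb{T}^n} |u(r\zeta)| \, dm_n(\zeta) \le \int_{\mathbb{T}^n} M^+ u(\zeta) \, dm_n(\zeta) = \|M^+u\|_{L^1(\mathbb{T}^n)} < \infty,
\]
so $u$ satisfies condition \eqref{sh1r} of Theorem \ref{Repr1}. That theorem yields a unique $\mu \in \mathcal{M}(\mathbb{T}^n)$ with $u = P_{\alpha,\beta}[d\mu]$. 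It therefore remains to prove that $\mu \ll m_n$, for then its Radon--Nikodym derivative $f = d\mu/dm_n$ lies in $L^1(\mathbb{T}^n)$ and $u = P_{\alpha,\beta}[f]$.

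Next I would feed the same pointwise domination through a weak$^\ast$ limit. Taking the diagonal radii ${\bf r} = (r, \ldots, r)$ we have $u_{\bf r}(\zeta) = u(r\zeta)$, hence $|u_{\bf r}(\zeta)| \le M^+u(\zeta)$ for every $r$. Theorem \ref{Cweakstar} asserts that $u_{\bf r}\, dm_n \to d\mu$ weak$^\ast$ as ${\bf r} \to {\bf 1}$ in $[0,1)^n$, and the diagonal $r \to 1$ is an admissible way to approach ${\bf 1}$; thus for every $\varphi \in C(\mathbb{T}^n)$,
\[
\left| \int_{\mathbb{T}^n} \varphi \, d\mu \right| = \lim_{r \to 1} \left| \int_{\mathbb{T}^n} \varphi(\zeta)\, u(r\zeta) \, dm_n(\zeta) \right| \le \limsup_{r \to 1} \int_{\mathbb{T}^n} |\varphi(\zeta)|\, |u(r\zeta)| \, dm_n(\zeta) \le \int_{\mathbb{T}^n} |\varphi|\, M^+u \, dm_n.
\]
This is the decisive estimate: the functional $\varphi \mapsto \int_{\mathbb{T}^n} \varphi \, d\mu$ is bounded on $C(\mathbb{T}^n)$ with respect to the $L^1(M^+u\, dm_n)$ norm, with norm at most $1$.

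Finally, since $M^+u\, dm_n$ is a finite Borel measure on the compact metric space $\mathbb{T}^n$, $C(\mathbb{T}^n)$ is dense in $L^1(M^+u\, dm_n)$, so the functional extends to a bounded linear functional of norm $\le 1$ on $L^1(M^+u\, dm_n)$. By $L^1$--$L^\infty$ duality there is $h \in L^\infty(M^+u\, dm_n)$ with $\|h\|_\infty \le 1$ such that $\int_{\mathbb{T}^n} \varphi \, d\mu = \int_{\mathbb{T}^n} \varphi\, h\, M^+u \, dm_n$ for all $\varphi \in C(\mathbb{T}^n)$; because a complex Borel measure on $\mathbb{T}^n$ is determined by its integrals against continuous functions, this forces $d\mu = f\, dm_n$ with $f = h\, M^+u$ (interpreted as $0$ where $M^+u = 0$). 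Then $|f| \le M^+u \in L^1(\mathbb{T}^n)$, so $f \in L^1(\mathbb{T}^n)$ and $u = P_{\alpha,\beta}[f]$, as required. I expect the only genuine obstacle to be the passage from the representing measure to its absolute continuity, i.e. ruling out a singular part; the bound $|u_r| \le M^+u \in L^1$ transported through the weak$^\ast$ limit is precisely what annihilates the singular part, while the remaining density and duality steps are routine.
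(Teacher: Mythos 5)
Your proof is correct, but it takes a genuinely different route from the paper's. Both arguments begin the same way: the bound $\vert u(r\zeta)\vert \leq M^+u(\zeta)$ verifies condition \eqref{sh1r}, so Theorem \ref{Repr1} produces $\mu \in \mathcal M(\mathbb T^n)$ with $u = P_{\alpha,\beta}[d\mu]$, and the issue is to rule out a singular part. The paper does this by writing $d\mu = f\,dm_n + d\sigma$ and invoking Corollary \ref{Combined} --- hence the entire maximal-function apparatus of Section \ref{FatouSec} (the weak $(1,1)$ estimate of Theorem \ref{Wk11nt} through Theorems \ref{NTforL1} and \ref{NTsing}) --- to get $u(r\zeta) \to f(\zeta)$ a.e.; dominated convergence with dominating function $M^+u$ then upgrades this to $L^1(\mathbb T^n)$ convergence, and comparison with the weak$^\ast$ convergence $u_r\,dm_n \to d\mu$ forces $d\sigma = 0$. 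You never decompose $\mu$ at all: you transport the domination $\vert u_r \vert \leq M^+u$ through the weak$^\ast$ limit of Theorem \ref{Cweakstar} to get $\left\vert \int_{\mathbb T^n} \varphi\, d\mu \right\vert \leq \int_{\mathbb T^n} \vert \varphi \vert\, M^+u\, dm_n$ for all $\varphi \in C(\mathbb T^n)$, and then density of $C(\mathbb T^n)$ in $L^1(M^+u\,dm_n)$ plus $L^1$--$L^\infty$ duality and Riesz uniqueness yield $d\mu = f\,dm_n$ with $\vert f \vert \leq M^+u$ a.e. Your route is more elementary and self-contained: it uses only the Section \ref{HpSec} results (representation and weak$^\ast$ convergence) and bypasses the Fatou machinery entirely, while also giving the pointwise bound $\vert f\vert \leq M^+u$ for free. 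What the paper's route buys is the simultaneous identification of $f$ as the a.e.\ radial (indeed restricted non-tangential) limit of $u$; but once $u = P_{\alpha,\beta}[f]$ with $f \in L^1(\mathbb T^n)$ is established, that information is recovered anyway from Theorem \ref{NTforL1}, so nothing essential is lost by your argument.
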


\begin{proof}
Clearly  $u \in sh^1_{\alpha, \beta} (\mathbb D^n)$ and therefore $u = P_{\alpha, \beta} [d\mu]$ for some $\mu$ in
$\mathcal M (\mathbb T^n)$. We have Lebesgue - Radon - Nikodym decomposition $d \mu = f dm_n + d \sigma$ and by
Corollary \ref{Combined} we have $\lim_{r \to 1} u(r\zeta) = f(\zeta)$ for almost every $\zeta \in \mathbb T^n$. Since
$M^+ u$ is integrable, the Dominated Convergence Theorem imples that convergence is also in $L^1 (\mathbb T^n)$ norm.
Since $u_r dm_n \to d\mu$ in the weak$^\ast$ sense, this implies $d\mu = f dm_n$.
\end{proof}

\subsection*{Acknowledgements}
The second and the third author are supported by MNTPR, Serbia







\end{document}